\title[Intersections of conjugates of Magnus subgroups of one-relator groups]{Intersections of conjugates of Magnus subgroups\\of one-relator groups} 
\author{Donald J Collins}                  
\address{School of Mathematical Sciences\\
Queen Mary, University of London\\\newline
Mile End Road\\London E1 4NS\\UK}                            
\email{d.j.collins@qmul.ac.uk}                     
\urladdr{}
\def\cnewtheorem#1[#2]#3{\newtheorem{#1}{#3}[section]
\expandafter\let\csname c@#1\endcsname\c@lem}
\numberwithin{equation}{section}
\def\strut{\vrule width0pt depth0pt height7.5pt}
\def\strutb{\vrule width0pt depth0pt height6pt}
\def\olazmi{\smash{\overleftarrow{z_m^{\smash{-1}}\strut}}}
\newcommand\olazp[1]{\smash{\overleftarrow{z\smash{\rlap{$'$}}_{#1}\strut}}}
\def\olajpo{\smash{\overleftarrow{\smash{j{+}1}\strutb}}}
\def\olaj{\smash{\overleftarrow{\smash{j}\strutb}}}
\def\oraj{\smash{\overrightarrow{\smash{j}\strutb}}}
\def\orai{\smash{\overrightarrow{\mskip-1mu\smash{i}\strutb}}}
\newtheorem{lem}{Lemma}[section]          % Lemma environment with numbering 
\newtheorem*{red}{Reduction}          % Unnumbered environment for remarks.
\newtheorem*{sclm}{Subclaim}
\newtheorem*{llem}{Lemma}
\newtheorem*{slem}{Sublemma}
\newtheorem{clm}{Claim}[section] 
\newtheorem{Thm}{Theorem}
\theoremstyle{definition}
\newtheorem{add}{Addendum}
\newtheorem{caa}{Case Assumption}
\begin{document}

\begin{asciiabstract} 
In the theory of one-relator groups, Magnus subgroups, which are free subgroups obtained by omitting a generator that occurs in the given relator, play an essential structural role. In a previous article, the author proved that if two distinct Magnus subgroups M and N of a one-relator group, with free bases S and T are given, then the intersection of M and N is  either the free subgroup P generated by the intersection of S and T or the free product of P with an infinite cyclic group.  

The main result of this article is that if M and N are Magnus subgroups (not necessarily distinct) of a one-relator group G  and g and h are elements of G, then either the intersection of gMg^{-1} and hNh^{-1} is cyclic (and possibly trivial), or gh^{-1} is an element of NM in which case the intersection is a conjugate of the intersection of M and N.
\end{asciiabstract}

\begin{htmlabstract}
<p class="noindent">
In the theory of one-relator groups, Magnus subgroups, which are free subgroups obtained by omitting a generator that occurs in the given relator, play an essential structural role. In a previous article, the author proved that if two distinct Magnus subgroups M and N of a one-relator group, with free bases S and T are given, then the intersection of M and N is  either the free subgroup P generated by the intersection of S and T or the free product of P with an infinite cyclic group.
</p>
<p class="noindent">
The main result of this article is that if M and N are Magnus subgroups (not necessarily distinct) of a one-relator group G  and g and h are elements of G, then either the intersection of gMg<sup>-1</sup> and hNh<sup>-1</sup> is cyclic (and possibly trivial), or gh<sup>-1</sup> is an element of NM in which case the intersection is a conjugate of the intersection of M and N.
</p>
\end{htmlabstract}

\begin{abstract}
In the theory of one-relator groups, Magnus subgroups, which are free subgroups obtained by omitting a generator that occurs in the given relator, play an essential structural role. In a previous article, the author proved that if two distinct Magnus subgroups $M$ and $N$ of a one-relator group, with free bases $S$ and $T$ are given, then the intersection of $M$ and $N$ is  either the free subgroup $P$ generated by the intersection of $S$ and $T$ or the free product of $P$ with an infinite cyclic group.  

The main result of this article is that if $M$ and $N$ are Magnus subgroups (not necessarily distinct) of a one-relator group $G$  and $g$ and $h$ are elements of $G$, then either the intersection of $gMg^{-1}$ and $hNh^{-1}$ is cyclic (and possibly trivial), or $gh^{-1}$ is an element of $NM$ in which case the intersection is a conjugate of the intersection of $M$ and $N$.
\end{abstract}

\maketitle

%%%%%%%%%%%%%%%%%%%%   Start of main body of article

\section{Introduction}\label{sec1} A Magnus subgroup of a one-relator group $G=\langle X : r=1 \rangle$, where $r$ is cyclically reduced, is a subgroup generated by a Magnus subset $S$ of $X$, ie a subset $S$ which omits a generator explicitly occurring in the relator $r$.  By the Freiheitssatz of Magnus (see for example page 104 or page 198 of Lyndon and Schupp \cite{LS}), any such subgroup is free with the given subset as basis.

The classical proof of many theorems on one-relator groups is by induction on the length of the relator. In its modern form, the inductive step in the classical proof expresses a one-relator group $G$ as an HNN-extension of a one-relator base group $G^*$ where the edge subgroups are Magnus subgroups of $G^*$.   Thus Magnus subgroups play a central role in this approach to the theory of one-relator groups.

In a previous article \cite{C}, we determined the form of the intersection of two Magnus subgroups. The precise statement is:

\begin{Thm}\label{thm1} Let $G = \langle X : r=1 \rangle$, where $r$ is cyclically reduced, be a one-relator group and let $M=F(S),N=F(T)$ be Magnus subgroups of $G$. If $M\cap N$ is distinct from $F(S\cap T)$, then $M\cap N$ is the free product of $F(S\cap T)$ and an infinite cycle.
\end{Thm}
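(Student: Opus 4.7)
The strategy is to proceed by induction on the length $|r|$ of the cyclically reduced relator. When $|r|=1$, the group $G$ is free on $X\setminus\{x\}$; each Magnus subgroup is then a free factor on a subset of this basis, so $M\cap N=F(S\cap T)$ automatically and the conclusion holds vacuously.

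For the inductive step I would employ the classical Magnus--Karrass--Solitar manoeuvre: after a preliminary Magnus rewriting if needed to arrange that some generator $t$ appearing in $r$ has exponent sum zero in $r$, present $G$ as an HNN extension $G = G^{*}*_{\phi}$ with stable letter $t$ and base group $G^{*}=\langle y_{i,k}=t^{k}y_{i}t^{-k}:r^{*}\rangle$, where the associated subgroups are themselves Magnus subgroups of $G^{*}$. Since the length of $r^{*}$ in the new alphabet is strictly less than $|r|$, the induction hypothesis applies inside $G^{*}$.

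I would then examine how $M=F(S)$ and $N=F(T)$ act on the Bass--Serre tree $\mathcal{T}$ of this HNN decomposition. If $t\notin S\cup T$, both subgroups fix a common vertex and are identified, via the Magnus rewriting, with Magnus subgroups of $G^{*}$ on explicit subsets of the new basis; the induction hypothesis inside $G^{*}$ then furnishes the required structure of $M\cap N$. If on the other hand $t\in S$, say, then $M$ acts on $\mathcal{T}$ with translation axis through the base vertex, and one argues that $M\cap N$ either fixes a vertex of $\mathcal{T}$ (again reducing to $G^{*}$) or contains a single hyperbolic element along this axis, which contributes precisely one infinite cyclic free factor to match the claimed splitting $F(S\cap T)*\mathbb{Z}$.

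The main obstacle, I expect, will be the uniqueness clause that the ``extra'' part beyond $F(S\cap T)$ is one infinite cycle rather than a larger free group. This amounts to ruling out two independent hyperbolic elements in $M\cap N$ modulo $F(S\cap T)$, and ultimately to a combinatorial fact about how a subword of (a cyclic permutation of) $r$ can simultaneously represent an element of $F(S)$ and an element of $F(T)$ in two essentially different ways. I anticipate that this will require a careful bookkeeping of the cyclic decomposition of $r$ relative to the two omitted generators, together with a Nielsen-type cancellation argument showing that any two candidate ``extra'' elements differ from one another, modulo $F(S\cap T)$, by an element of $F(S\cap T)$ itself.
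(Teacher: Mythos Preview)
This theorem is not proved in the present paper. It is quoted from the author's earlier article \cite{C} and used here only as background and as a source of technical lemmas (Propositions~5.1, 5.2, 5.5, 6.2 of \cite{C}, the ``single syllable criterion'', the notion of Gurevich subwords, etc.). So there is no proof in this paper to compare your proposal against.

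That said, your outline is a plausible scaffolding but it is not yet a proof, and the shape of the argument you sketch differs from what the referenced machinery in \cite{C} suggests. The approach in \cite{C}, as one can infer from the tools invoked throughout the present paper, is not a Bass--Serre tree argument about hyperbolic elements; it is a fine-grained combinatorial analysis of how the relator $r^{*}$ can be matched against a word that simultaneously lies in $F(A^{*},B^{*})$ and in $F(A^{*}_{+},B^{*},C^{*})$, controlling the possible positions of the extremal generators $a_{\kappa},a_{\lambda},c_{\mu},c_{\nu}$ inside putative exceptional equalities and exploiting Gurevich's Theorem on disjoint subwords. The ``only one extra infinite cycle'' conclusion comes out of Proposition~5.1-type statements pinning down the exceptional generator uniquely, not from an axis-counting argument on $\mathcal{T}$.

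Your final paragraph correctly identifies the hard step, but ``a Nielsen-type cancellation argument'' is too vague to carry it. Concretely: if $t\in S$ and $M$ acts with an axis, nothing in your sketch prevents $M\cap N$ from containing two hyperbolic elements with the \emph{same} translation length but distinct axes stabilisers modulo $F(S\cap T)$; ruling that out is exactly where the detailed syllable-by-syllable analysis of \cite{C} does the work. If you want to push your line through, you would need to prove directly that the vertex stabiliser of $M\cap N$ at the base vertex already equals $F(S\cap T)$ (this is the inductive content) and then that any hyperbolic element of $M\cap N$ has translation length bounded below by a quantity determined by $r$, forcing all such elements to be powers of a single primitive one up to the vertex group. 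Neither step is supplied.
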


In the present article we examine the intersection of conjugates of two Magnus subgroups, and it suffices to deal with the case of an intersection of the form $gMg^{-1}\cap N$, where $M=F(S),N=F(T)$. A simple and obvious argument shows that if $g\in NM$, then $gMg^{-1}\cap N$ is just a conjugate of $M\cap N$ by an element of $N$ and in particular is isomorphic to $M\cap N$.  Our main conclusion deals with the alternative case.

\begin{Thm}\label{thm2} Let $G = \langle X : r=1 \rangle$, where $r$ is cyclically reduced, be a one-relator group and let $M = F(S),N=F(T)$ be Magnus subgroups of $G$, allowing $M=N$. For any $g \in G$, either $gMg^{-1}\cap N$ is cyclic (possibly trivial) or $g\in NM$.
\end{Thm}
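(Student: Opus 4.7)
The plan is to prove \autoref{thm2} by induction on the length of the cyclically reduced relator $r$, following the Magnus--Moldavanskii hierarchy that underlies the classical approach to one-relator groups. The base cases are when $r$ has length one, or more generally when $G$ reduces to a free product of a cyclic group with a free group; in these situations the Magnus subgroups are free factors, and the theorem follows from the normal form theorem for free products together with a direct inspection.

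For the inductive step, assume the result for all one-relator groups with strictly shorter relators. After the standard exponent-sum trick of adjoining an auxiliary generator if necessary, one may assume that some generator $y$ occurring in $r$ has exponent sum zero in $r$. Rewriting the presentation with respect to $y$ exhibits $G$ as an HNN-extension
\[
G \cong \langle G^{*}, y \mid y A y^{-1} = B \rangle,
\]
where $G^{*}$ is a one-relator group with strictly shorter relator and the associated subgroups $A,B$ are themselves Magnus subgroups of $G^{*}$. Each Magnus subgroup $M=F(S)$ of $G$ transports controllably into this decomposition: if $y\notin S$ then $M$ embeds into $G^{*}$ as a Magnus subgroup, while if $y\in S$ then $M$ inherits an HNN-like structure whose $G^{*}$-part, $M\cap G^{*}$, is the free group on the rewritten conjugates of the elements of $S\setminus\{y\}$. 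The same analysis applies to $N=F(T)$.

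Now fix $g\in G$, assume $K := gMg^{-1}\cap N$ is non-cyclic, and write $g$ in Britton normal form $g = g_{0}\,y^{\epsilon_{1}}\,g_{1}\cdots y^{\epsilon_{k}}\,g_{k}$ with the $g_{i}\in G^{*}$ and no pinches. Choose two non-commuting elements $n_{1},n_{2}\in K$ and write $n_{i} = g\,m_{i}\,g^{-1}$ with $m_{i}\in M$. Applying Britton's Lemma to each equation $g^{-1}n_{i}g = m_{i}$ produces cascades of pinches that force the syllables $g_{i}$ to lie alternately in $A\cup B$, and constrain the initial and terminal segments of the $m_{i},n_{i}$. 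If $k=0$, then $g\in G^{*}$ and the inductive hypothesis in $G^{*}$ — together with \autoref{thm1} applied to the Magnus subgroup intersections that appear — yields $g\in NM$. If $k>0$, the non-cyclicity of $K$ provides enough independent pinch cascades that the outermost $y$-syllables of $g$ can be absorbed into factors of $N$ on the left and $M$ on the right, performing a secondary downward induction on $k$ that terminates in the $k=0$ case.

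The principal obstacle is the fourfold case analysis according as $y$ lies in $S$, in $T$, in both, or in neither, intertwined with the secondary induction on the syllable length $k$. The hardest subcase is $y\in S\cap T$, in which the stable letter is simultaneously a free basis element of both $M$ and $N$; here a $y$-syllable of $g$ can a priori be absorbed into either side, and keeping the Britton cascades for both conjugation equations consistent requires careful bookkeeping of which edge group ($A$ or $B$) each intermediate $g_{i}$ sits in. Additional technical care is needed to ensure that the Magnus subgroups appearing in the inductive hypothesis in $G^{*}$ genuinely correspond to subsets of the rewritten generating set of $G^{*}$ omitting a generator of the shorter relator, so that the argument remains strictly within the scope of the inductive framework.
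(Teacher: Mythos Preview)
Your overall architecture matches the paper's: induction on $|r|$, the exponent-sum trick, the Magnus--Moldavanskii HNN decomposition, and a secondary induction on the $y$-syllable length of $g$. The reduction of the inductive step $k>0$ via Britton pinches is also essentially what the paper does in its Section~3. But the sentence ``If $k=0$, then $g\in G^{*}$ and the inductive hypothesis in $G^{*}$ \ldots\ yields $g\in NM$'' hides the entire difficulty of the theorem, and as written it does not go through.

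The problem is this. In the hard subcase $y\in S\cap T$ (which the paper reaches after reducing to $S=\{a\}\cup B$, $T=B\cup\{c\}$ and taking $y=b\in B$), the stable letter lies in both $M$ and $N$. So even when $g\in G^{*}$, the witnesses $m_i\in M$ and $n_i\in N$ to non-cyclicity typically have positive $y$-length, and the equations $g m_i g^{-1}=n_i$ are \emph{not} equations in $G^{*}$ to which the inductive hypothesis can be applied directly. What one gets instead, via Britton's Lemma, is a chain of Normal Form Equalities
\[
g h_0 = k_0 z_0,\quad \overline{z_0}\,h_1 = k_1 z_1,\quad \ldots,\quad \overline{z_{m-1}}\,h_m = k_m g
\]
with the $z_j$ lying in the edge groups $L$ or $U$. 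Each such equality defines an element of an intersection of Magnus subgroups of $G^{*}$, but these intersections can be \emph{exceptional} in the sense of \autoref{thm1}, and when they are, the equalities do not hold freely and you cannot simply read off $g\in NM$. The paper spends Sections~4 and~5 --- roughly two thirds of its length --- on this case alone: a further induction on the total number of sign changes $\rho_b(h)+\rho_b(h')$ in the two witnesses (Claims~4.1--4.6), together with new technical propositions (5.2 and 5.5) controlling exactly which exceptional equalities can occur given constraints on the syllable lengths of the edge-group elements. Invoking \autoref{thm1} alone, as you do, is not nearly enough; you need the fine structure of the exceptional generators $u=v_1^{-1}v_0v_2$ and $p_1p_0p_2^{-1}=q$ from \cite{C}, and a case split on whether $v_1,v_2$ are ``intermediate''. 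Without this machinery the $k=0$ base case is a genuine gap.
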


A simple argument also enables one to describe the form of an intersection $gM{g^{\prime}}^{-1} \cap N$, where $M,N$ are Magnus subgroups and $g, g^{\prime} \in G$, in terms of the intersections $gMg^{-1} \cap N$ and $g^{\prime}M{g^{\prime}}^{-1} \cap N$.
 
It is surprising that the questions addressed in \fullref{thm1} and \fullref{thm2} have not been examined more extensively, given that some of the difficulty in studying one-relator groups arises precisely from the situation where a pair of Magnus subgroups have \textit{exceptional} intersection, that is  $F(S)\cap F(T)\neq F(S\cap T)$. However there are some partial results that deal with special cases of \fullref{thm1} and \fullref{thm2}. In particular Bagherzadeh \cite{Ba} has shown that if $M=F(S)$ is a Magnus subgroup and $g \notin M$, then $gMg^{-1}\cap M$ is cyclic (possibly trivial) and in \cite{Br}, Brodski{\u\i} actually considered a more general situation and showed that in a one-relator product $\langle A * B \ |\  r=1\rangle$ of locally indicable groups, the intersections $A \cap B$, $gAg^{-1} \cap A$ and $gAg^{-1} \cap B$ are all cyclic (possibly trivial).  In the context of one-relator groups, Brodski{\u\i}'s results imply that if the Magnus subsets $S$ and $T$ are disjoint, then $F(S) \cap F(T)$, $gF(S)g^{-1}\cap F(S)$ and $gF(S)g^{-1}\cap F(T)$ are cyclic. Finally Newman \cite{N} showed that in one-relator groups with torsion, Magnus subgroups are malnormal, ie if $M=F(S)$, where $S$ is a Magnus subset and $g \notin M$, then $gMg^{-1}\cap M$ is trivial  

In addition, in \cite{C}, we also showed that, by extending a version of Newman's argument, one can easily prove that if the one-relator group $G$ has torsion, ie when the relator is a proper power, then, for any two Magnus subgroups $M=F(S)$ and $N=F(T)$ and any $g \in G$, $M \cap N$ is not exceptional and either $gMg^{-1}\cap N$ is trivial or $g\in NM$. Moreover, Newman's approach -- using the so-called Spelling Lemma -- also yields, in the torsion case, an algorithm to determine the precise form of $gMg^{-1}\cap N$, in particular to determine for a given $g$ whether or not $g \in NM$.   These strong results that follow from Newman's work underline why one-relator groups with torsion are easier to work with than one-relator groups in general.

\fullref{thm1} has been significantly extended and generalised by Howie in \cite{H} where he provides a detailed description of how the \textit{exceptional} case  can arise and generalises \fullref{thm1} to the case of a one-relator product of locally indicable groups.  In addition his methods provide an algorithm to determine for a given one-relator group and two Magnus subgroups $M$ and $N$, whether or not $M\cap N$ is exceptional and to determine a generator for the additional infinite cycle in the exceptional case.

 In contrast to the situation for the intersection of two Magnus subgroups, the algorithmic problems arising from \fullref{thm2} remain open.  The difficulty appears to be caused by the case of two-generator one-relator groups.  For both \fullref{thm1} and \mbox{\fullref{thm2}}, there is nothing to prove in this case, for if $G = \langle a,b \ | \ r=1 \rangle$, then the Magnus subgroups  $M=F(a)$ and $N=F(b)$ are both cyclic. In the case of \fullref{thm1}, the algorithmic determination of $F(a) \cap F(b)$ is provided by a procedure based on the Baumslag--Taylor algorithm for determining the centre.  The methods of \cite{H} then yield a procedure for the general case.  For the case of \fullref{thm2} when $G = \langle a,b \ | \ r=1 \rangle$ one has to be able to determine, for a given $g \in G$, the intersections $gF(a)g^{-1} \cap F(a)$ and $gF(a)g^{-1} \cap F(b)$.  In the latter case, one appears to need, as part of the procedure, to be able to determine whether or not $g \in F(b)F(a)$.  For this additional question, despite the fact that, in his solution to the word problem for one-relator groups, Magnus proved that one can always decide if a given element lies in a given Magnus subgroup, the usual inductive technique seems to run aground in the two-generator case when neither generator has exponent sum zero in the relator.

\begin{add}$\mspace{-16.5mu}$ {\rm to \cite{C}}\qua
The reader of \cite{C} should note that although, in the definition of notation on page 273 of \cite{C}, it is made clear that the sets $A^*_+$ and $A^*_-$ may both be empty and similarly for $C^*_+$ and $C^*_-$, there is no specific discussion in Sections 5--6 of \cite{C} of what happens when these possibilities arise.  However, as we point out below, in practice the results in these sections and the similar results in \fullref{sec5} are employed only in situations where all of $A^*_+, A^*_-, C^*_+, C^*_-$ are nonempty.  This point is clarified in the introduction to \fullref{sec4}. It is also worth pointing out that $B^*$ may be empty -- however since the role of $B^*$ throughout the argument is essentially passive, it is clear that nothing is disturbed if $B^*$ is empty.
\end{add}
\begin{add}$\mspace{-14mu}$ {\rm to \cite{C}}\qua\label{add2}
In Lemma 6.1 on page 286 of \cite{C}, the notation $L$ is used with two distinct meanings, only one of which is explained in the text.  The meaning explained is the one that occurs right throughout the whole of \cite{C}, namely that $L$ denotes the ``lower'' edge group in the representation of our one-relator group $G$ as an HNN-extension, for example as $G= \langle G^*, b \ |\ bLb^{-1}=U \rangle$, where $U$ is the upper edge group.  The second meaning, which is used throughout \S 6 of \cite{C} and in \fullref{sec5}, is to denote by $L(z)$ the syllable length, as defined on page 283 of \cite{C} of an element $z$ of, for instance, $F(A^*_+,B^*,C^*)$.
\end{add}

\section[Structure and simple cases in the proof of \ref{thm2}]{Structure and simple cases in the proof of \fullref{thm2}}\label{sec2}

The proof of \fullref{thm2} proceeds, as is usual, by induction on the length of the relator.  We make various initial reductions and then address three separate cases at the inductive step.  Of these, the first is straightforward and the third reduces easily to the second. However, the second case is complicated and requires substantial analysis, making use of some of the technical results from \cite{C}.

\medskip
\textbf{Initial Observations}
\begin{enumerate}
\item [(i)] For small values of $|r|$, the result is elementary by inspection.
\item [(ii)] The general case will follow, via the normal form theorem for free products,  from the case when $S \cup T = \hbox{Supp}(r)$ and so we can always assume the latter.
\item [(iii)] When $|\hbox{Supp}(r)| = 2$, the conclusion is immediate, so that we can assume that $|\hbox{Supp}(r)| \ge 3$.
\item [(iv)] If $S \subset T$ then $gF(S)g^{-1}\cap F(T) \subseteq gF(T)g^{-1}\cap F(T)$ which, by \cite{Ba}, is cyclic unless $g \in F(T)$ so that we can always assume that $S\cap T$ is a proper subset of both $S$ and $T$;
\item [(v)] (assuming (iv)) If we write $B= S\cap T$ and then  choose $A$ and $C$ disjoint so that $S = A\cup B$ and $T=B\cup C$, then the general case reduces to the case when $A$ and $C$ are singletons, say $A = \{a\}$ and $C=\{c\}$. 
\end{enumerate} 

We therefore take all of these as given and embark upon the inductive case; our strategy will always be to assume the conclusion false and then work our way to a contradiction.  In particular we shall assume that there exist $g \notin F(B,C)F(A,B), h,h' \in F(A,B)$ and $k,k' \in F(B,C)$ such that $\{h,h'\}$ (and, necessarily, $\{k,k'\}$) constitute a free basis of the corresponding subgroup they generate. We shall refer to such a configuration as a \textit{counterpair}.

There are three cases:
\begin{enumerate}
\item [(2.1)] Case Assumption: Either $a$ or $c$ has exponent sum zero in $r$.  
\item [(2.2)] Case Assumption: Neither $a$ nor $c$ has exponent sum zero in $r$ but there exists $b \in B$ such that $\sigma_b(r) =0$.
\item [(2.3)] Case Assumption: No generator has exponent sum zero in $r$. 
\end{enumerate}

\textbf{Case 2.1}\qua Without loss of generality we may assume that $a$ has exponent sum zero in $r$. We may further assume, by replacing $r$ by a cyclic permutation if necessary, that $c^{\pm 1}$ is the initial letter of $r$. 

In the standard manner we can express $G$ as an HNN-extension of the form $G = \langle G^*, a \ |\  aLa^{-1} = U \rangle $ where $L$ and $U$ are Magnus subgroups of the base group $G^*$.  To do this we define $C^* = \{c_{\mu},  \ldots, c_{\nu}\}$ and $B^* = \{b_i, i\in {\bf Z}, b\in B\}$ where, as usual, $b_i$ and $c_i$ denote the conjugates $a^iba^{-i}$ and $a^ica^{-i}$ with $\mu$ and $\nu$ respectively the minimal and maximal subscripts that appear when we rewrite $r$ as a word $r^*$ in $B^*\cup C^*$.  With this notation $G^* = \langle B^*,C^* \ | \ r^* =1 \rangle$ and the two edge groups are $L= F(B^*,C^*_-), U=F(B^*,C^*_+)$, where $C^*_- = \{c_{\mu},  \ldots, c_{\nu-1}\}$ and $C^*_+ = \{c_{\mu+1},  \ldots, c_{\nu}\}$.  We note that by requiring that $r$ begins with $c^{\pm 1}$ we have ensured that $\mu \le 0 \leq \nu$. (We do not exclude the possibility that $\mu=0=\nu$ in which case $C^*_+$ and $C^*_-$ are both empty but we will not usually make explicit reference to this since the argument is either unchanged or even simplified.) Given any $z \in U$ we write $\overleftarrow z$ for the word obtained by reducing subscripts by one and similarly for any $w \in L$, we write $\overrightarrow w$ when we increase the subscripts by one.

We can transform any equality $gh(A,B)g^{-1}=k(B,C)$ into one expressed in the generators of $G$ as HNN-extension. We  write $g=g_0a^{\varepsilon_1}g_1\ldots a^{\varepsilon_m}g_m$ in reduced form, where $\varepsilon_i = \pm 1$. Since $k$ omits $a$, $h$ has zero exponent sum in $a$ and thus both $h$ and $k$ lie in the base group $G^*$ -- $h \in F(B^*)$ and $k \in F(B_0,c_0)$ where $B_0= \{b_0 \ | \ b \in B\}$. Among all counterpairs, we choose one with $m=l_b(g)$ minimal.  If $l_b(g) = 0$, then both equalities hold in the base group $G^*$ and hence we can only have $g \in F(B_0,c_0)F(B^*)$.  But then clearly $g \in F(B,C)F(A,B)$ and we have reached a contradiction, as we wish. 

Suppose, then that $l_a(g)>0$.  Choosing $\varepsilon_m =-1$, just for definiteness, we obtain $g_mhg_m^{-1} = z \in F(B^*,C^*_+)$ and $g_mh'g_m^{-1} = z' \in F(B^*,C^*_+)$. By the induction hypothesis on $|r|$, we can only have $g_m \in F(B^*,C^*_+)F(B^*) =F(B^*,C^*_+)$. Then $$ghg^{-1}= g_0a^{\varepsilon_1}g_1\ldots a^{\varepsilon_{m-1}}g_{m-1}\overleftarrow {g_m}\overleftarrow h\overleftarrow {g_m}^{-1}g_{m-1}^{-1}a^{-\varepsilon_{m-1}}\ldots g_1^{-1}a^{-\varepsilon_1}g_0^{-1} =k$$ and similarly for $h'$ and $k'$.  Since $\overleftarrow h$ and $\overleftarrow {h'}$ are conjugates of $h$ and $h'$, it follows from the minimality of our choice of $m$ that the only conclusion we can have is that $g_0a^{\varepsilon_1}g_1\ldots a^{\varepsilon_{m-1}}g_{m-1}\overleftarrow {g_m}\in F(B,C)F(A,B)$ and from this in turn it follows that $g\in F(B,C)F(A,B)$, which is the required contradiction.

\medskip\textbf{Case 2.2}\qua  To deal with this case we employ techniques similar to those of \S 5 of \cite{C} and follow pages 272--273 of \cite{C} in our notation and terminology. Thus we express $G = \langle X : r=1 \rangle$ as an HNN-extension of the form $G = \langle G^*, b \ |\  bLb^{-1} = U \rangle$ where the following hold. 
\begin{enumerate}\item [(1)] $G^* = \langle X^* \ |\ r^* \rangle$ where $X^*=\{a_{\kappa}, \ldots, a_{\lambda}, c_{\mu}, \ldots, c_{\nu}\}\cup\{x_i, i \in {\bf Z}\}, x \neq a,c$,  $\{a_{\kappa}, a_{\lambda}, c_{\mu}, c_{\nu}\}$ are the respective minimal and maximal generators in $r^*$ associated with $a$ and $c$, and otherwise the subscript range is infinite.
\item [(2)] Furthermore $L= F(A^*_-,B^*,C^*_-)$ and $U = F(A^*_+, B^*, C^*_+)$ where
$$\begin{matrix}A^*= \{a_{\kappa}, \ldots, a_{\lambda}\}, &A^*_+ = \{a_{\kappa+1}, \ldots, a_{\lambda}\}, &A^*_- = \{a_{\kappa}, \ldots, a_{\lambda-1}\},\\
C^* = \{c_{\mu}, \ldots, c_{\nu}\}, &C^*_+ = \{c_{\mu+1}, \ldots, c_{\nu}\}, &C^*_- = \{c_{\mu}, \ldots, c_{\nu-1}\},\end{matrix}$$ and $B^* = \{x_i, x \in B',i \in {\bf Z}\}$ where $B' = B \setminus \{b\}$. 
\end{enumerate}

We allow the possibility that $\kappa = \lambda$ or $\mu = \nu$, or both. If, for example, $\kappa = \lambda$, then $A^*_+$ and $A^*_-$ are empty ; arguments which make reference to these must be interpreted suitably for this case.  Also $B^*$ may be empty but as noted already in reference to \cite{C}, nothing in an argument will be disturbed if in fact $B^*$ is empty.

We employ a subsidiary induction on $l_b(g)$.  The inductive step when $l_b(g)>0$ is comparatively straightforward and we deal with it in \fullref{sec3}. Then in \fullref{sec4} we tackle the core of the argument, namely the case when $l_b(g)=0$.

\medskip\textbf{Case 2.3}\qua As described below, the standard method for dealing with the case when no generator has exponent sum zero in $r$ reduces this case to Case 2.2.  

We have at least three generators $a,b,c$ where $A=\{a\}, b\in B, C=\{c\}$; suppose that $r$ has exponent sum $\alpha \neq 0$ in $a$ and exponent sum $\beta \neq 0$ in $b$.  Introduce new generators $x,y$ setting $b=y^\alpha$ and $a=xy^{-\beta}$ so that we have embedded $G$ in the amalgamated free product $\hat G = \langle G*F(y) \ |\ b=y^\alpha \rangle$ and then replaced $a$ by $x=ay^{\beta}$.  The resulting relator $\hat r \equiv r(xy^{-\beta},y^{\alpha}, \ldots , c)$ has exponent sum zero in $y$ and our equalities become $g\hat h(X,Y)g^{-1}=\hat k(Y,C)$ and $g\hat h'(X,Y)g^{-1}=\hat k'(Y,C)$ where $X=\{x\}, Y=\{y,B'\}, C=\{c\}$ and $B = \{b,B'\}$. 

Now Case 2.2 applies and gives the conclusion that either $gF(X,Y)g^{-1}\cap F(Y,C)$ is cyclic or $g \in F(Y,C)F(X,Y)$. Clearly $F(A,B) \subseteq F(X,Y)$ and $F(B,C) \subseteq F(Y,C)$, so that $gF(A,B)g^{-1}\cap F(B,C) \subseteq gF(X,Y)g^{-1}\cap F(Y,C)$.  By our counterpair assumption, $gF(X,Y)g^{-1}\cap F(Y,C)$ cannot be cyclic and therefore $g \in F(Y,C)F(X,Y)$.  But $G \cap F(Y,C)F(X,Y) = F(B,C)F(A,B)$ and our counterpair assumption rules this out and we have the required contradiction. 

\medskip This completes the logical structure of the proof of \fullref{thm2} but of course it remains to deal with Case 2.2.

\section[Case 2.2: the inductive step when l-b(g)>0]{Case 2.2: the inductive step when $l_b(g) >0$}\label{sec3}

As noted above, Case 2.2 is dealt with by a subsidiary induction on $l_b(g)$.  In this section we deal with the inductive step of this subsidiary induction and hence reduce Case 2.2 to the initial step of the subsidiary induction when $l_b(g) =0$.  

Our standpoint here is, therefore, that we have an overall inductive hypothesis which asserts that the theorem holds for relators of shorter length and, arguing by contradiction, we are assuming that there exist counterpairs $ghg^{-1}=k$ and $gh'g^{-1}=k'$.  For the purposes of the subsidiary induction we know there is a counterpair where $l_b(g)$ is minimal but strictly positive. 

\begin{red} In any such counterpair $l_b(h) = l_b(k)= l_b(h')= l_b(k')=0$. \end{red}
\begin{proof}  Suppose not; without loss of generality, we may assume that $l_b(h)>0$. Let us write $g = \tilde gb^{\varepsilon_m}g_m$ where $m=l_b(g)$ and $l_b(\tilde g)=m-1$.  Still without loss of generality we also assume that $\varepsilon_m = -1$.  Then we can write 
$$ghg^{-1}= \tilde gb^{-1}g_mh_0b^{\zeta_1}h_1 \ldots b^{\zeta_l}h_lg_m^{-1}b\tilde g^{-1} = k.$$ Now two subcases arise depending on whether or not (the detailed expression for) $ghg^{-1}$ is or is not reduced. If the latter occurs, then 
 either $\zeta_1=1$ and $g_mh_0 =z \in U$ or $\zeta_l=-1$ and $h_lg_m^{-1} =z \in U$.  It suffices to assume the first occurs. Substituting for $g_m$ we obtain
$$\tilde gb^{-1}zh_0^{-1}hh_0z^{-1}b\tilde g^{-1} = k,
\tilde gb^{-1}zh_0^{-1}h'h_0z^{-1}b\tilde g^{-1}  = k'$$
and hence $$\tilde g\overleftarrow z(b^{-1}h_0^{-1}hh_0b){\overleftarrow z}^{-1}b\tilde g^{-1} = k, \tilde g\overleftarrow z(b^{-1}h_0^{-1}h'h_0b){\overleftarrow z}^{-1}\tilde g^{-1} = k'.$$
By the minimality of $l_b(g)$, we deduce that conjugates of $h$ and $h'$ commute, which of course is a contradiction, or that $\tilde g\overleftarrow z \in F(B,C)F(A,B)$. However, if the latter holds then $g = \tilde gb^{-1}g_m = \tilde gb^{-1}zh_0^{-1} = \tilde g\overleftarrow z b^{-1}h_0^{-1} \in F(B,C)F(A,B)$ and here too we have the necessary contradiction.

To complete the proof of the Reduction we have to see what happens when our expression for $ghg^{-1}$ is reduced.  Then of course $l_b(k) >0$, say $k = k_0b^{\xi_1}k_1 \ldots b^{\xi_n}k_n$ (where $n= 2m+l$) and $h= g^{-1}kg$, with $g^{-1}kg$ not reduced. But now we can argue exactly as we did when $ghg^{-1}$ was not reduced. \end{proof}

To complete the inductive step in the subsidiary induction, it remains only to show for the case at hand that we cannot have a counterpair when $l_b(h) = l_b(k)= l_b(h')= l_b(k')=0$.  We write $g = \tilde gb^{\varepsilon_m}g_m$ as above, and can take $\varepsilon_m =-1$.  Then we obtain
$\smash{\tilde gb^{-1}g_mhg_m^{-1}b\tilde g^{-1} = k \ , \   \tilde gb^{-1}g_mh'g_m^{-1}b\tilde g^{-1}=k'}$ and hence
$g_mhg_m^{-1}= z \in U, \quad g_mh'g_m^{-1}= z' \in U.$ Now these equalities define elements of the intersection $g_mF(A^*,B^*)g_m^{-1} \cap F(A^*_+,B^*,C^*_+)$, which involves Magnus subgroups of a group with a shorter relator. (Note that we cannot have all of $A^*_+,B^*,C^*_+$ empty since clearly $h,h' \neq 1$.)  Hence either $h$ and $h'$ commute or $g_m \in F(A^*_+,B^*,C^*_+)F(A^*,B^*)$, say $g_m =z_0h_0$.  The former is contradictory and so from the latter we obtain $\smash{z_0h_0hh_0^{-1}z_0^{-1} = z}$ and $\smash{z_0h_0h'h_0^{-1}z_0^{-1} = z'}$ whence $\smash{h_0hh_0^{-1} \in U, h_0h'h_0^{-1} \in U}$.  Let us write $\smash{x= h_0hh_0^{-1}}$ and $\smash{x'= h_0h'h_0^{-1}}$.  Then we have $\smash{\tilde gb^{-1}z_0xz_0^{-1}b\tilde g^{-1}}{=} k$ and $\smash{s \tilde gb^{-1}z_0x'z_0^{-1}b\tilde g^{-1}{=}k'}$. This yields
$\tilde g{\overleftarrow {z_0}}\overleftarrow x\smash{{\overleftarrow {z_0}}}^{\!-1}\tilde g^{-1} {=} k$ and $\tilde gb^{-1}{\overleftarrow {z_0}}\overleftarrow {x\smash{'}\strut}\smash{{\overleftarrow {z_0}}}^{\!-1}b\tilde g^{-1}=k'.$  By the minimality of $l_b(g)$, we deduce that conjugates of $h$ and $\smash{h'}$ commute (either $k$ and $k'$ or $\overleftarrow x$ and $\smash{\overleftarrow {x\smash{'}\strut}}$) or $\smash{\tilde g{\overleftarrow {z_0}}} \in F(B,C)F(A,B)$.  In the latter case $$g = \tilde gb^{-1}g_m = \tilde gb^{-1}z_0h_0 = \tilde g{\overleftarrow {z_0}} h_0\in F(B,C)F(A,B)$$ and we have the required contradiction when $l_b(g) >0$. 

\section[Case 2.2: the case when l-b(g)=0]{Case 2.2: the case when $l_b(g) = 0$}\label{sec4}

Our standpoint is, again, that we have an overall inductive hypothesis which asserts that the theorem holds for relators of shorter length and, arguing by contradiction, we are assuming that there exist counterpairs $ghg^{-1}=k$ and $gh'g^{-1}=k'$.  This time, however, we assume that there is a counterpair where $l_b(g)=0$, ie $g \in G^*$. 

Since $l_b(g) = 0$, $h$ and $k$ have the same signature pattern in $b$ and the same is true for $h'$ and $k'$. Possibly $l_b(h)=0$ and $h\in G^*$ while if $l_b(h) > 0$ then we have a sequence of Normal Form Equalities derived from the equality $$ghg^{-1} \equiv gh_0b^{\varepsilon_1}h_1 \ldots b^{\varepsilon_m}h_mg^{-1} = k_0b^{\varepsilon_1}k_1 \ldots b^{\varepsilon_m}k_m \equiv k$$ in which $h,k$ are expressed in reduced form in terms of the HNN extension $G = \langle G^*, b\ |\  bLb^{-1} = U \rangle$.  A similar observation applies to $h'$.

By Lemma 5.1 of \cite{C}, $h_0, \ldots h_m \in F(A^*,B^*)$ and $k_0, \ldots k_m \in F(B^*,C^*)$.   The Normal Form Equalities for the $ghg^{-1}=k$ are then $$gh_0 = k_0z_0, \overline {z_0}h_1 = k_1z_1, \overline {z_1}h_2 = k_2z_2, \ldots, \overline {z_{m-1}}h_m = k_mg$$ where $z_0, \ldots z_{m-1}$ lie in $L$ or $U$ according as ${\varepsilon_1}, \ldots, {\varepsilon_m}$ are $\pm 1$ and $\overline {z_{i-1}}$ represents a ``downshift'' or ``upshift'' of subscripts according as ${\varepsilon_i} = \pm 1$.  When we have such a sequence, $gh_0b^{\varepsilon_1}h_1 \ldots b^{\varepsilon_i}h_i = k_0b^{\varepsilon_1}k_1 \ldots b^{\varepsilon_i}k_iz_i$ and the fact that  $g \notin F(B,C)F(A,B)$ means that $z_i\notin F(B,C)F(A,B)$  and in particular is nontrivial. Moreover, when we use the Normal Form Equalities in standardised form, as described on pages 275-276 of \cite{C}, the elements $z_i$ will always be nontrivial and of type $(A^*:C^*)$.  In fact if $z_i \in U$ then $z_i$ must actually be of type $(A^*_+:C^*_+)$ in which case both $A^*_+$,  and $C^*_+$ are nonempty, and therefore both $A^*_-$ and $C^*_-$ are nonempty. A similar remark applies if $z_i \in L$.  This means that our applications of the results of Sections 5--6 of \cite{C} and of \fullref{sec5} are applied under the hypotheses that there is no hidden ``collapsing'' of the terms denoted by the notation. 

We shall use these observations throughout this section without further reference. We shall establish a series of claims which, cumulatively, will demonstrate that there are no counterpairs satisfying $l_b(g)=0$.

\begin{clm}\label{Claim 4.1}There do not exist counterpairs $ghg^{-1}=k, gh'g^{-1}=k'$, with $l_b(g)=0$, such that min$\{l_b(h),l_b(h')\} = 0$.
\end{clm}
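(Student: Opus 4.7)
Suppose for contradiction that such a counterpair $(g,h,h',k,k')$ exists, and without loss of generality take $l_b(h)=0$. Since $g\in G^*$ (because $l_b(g)=0$) and $h\in F(A,B)$ has no $b$-syllables, $h$ lies in the Magnus subgroup $F(A^*,B^*)$ of $G^*$, and likewise $k=ghg^{-1}\in G^*\cap F(B,C)\subseteq F(B^*,C^*)$. Thus $ghg^{-1}=k$ is an equality between conjugate Magnus subgroups of the base group $G^*$, whose relator $r^*$ is strictly shorter than $r$ (since $\sigma_b(r)=0$ together with $b\in\mathrm{Supp}(r)$ forces $b$ to occur with both signs in $r$).

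Applying the overall inductive hypothesis of \fullref{thm2} inside $G^*$ yields two possibilities: either $g\in F(B^*,C^*)F(A^*,B^*)$ -- in which case $g\in F(B,C)F(A,B)$, because $F(A^*,B^*)\subseteq F(A,B)$ and $F(B^*,C^*)\subseteq F(B,C)$, contradicting the counterpair hypothesis -- or the intersection $gF(A^*,B^*)g^{-1}\cap F(B^*,C^*)$ equals $\langle w\rangle$ for some $w$, with $k$ a power of $w$.

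If $l_b(h')=0$ as well, the same reasoning places $h'\in F(A^*,B^*)$ and $k'\in F(B^*,C^*)$, hence $k'\in\langle w\rangle$ too; but then $k,k'$ both lie in a cyclic subgroup, contradicting the rank-two freeness of $\{k,k'\}$ inherited from the counterpair assumption on $\{h,h'\}$. This settles the easier half of the claim.

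The case $l_b(h')>0$ is where I expect the real work. I would write $h'=h'_0 b^{\varepsilon_1}h'_1\cdots b^{\varepsilon_m}h'_m$ and $k'=k'_0 b^{\varepsilon_1}k'_1\cdots b^{\varepsilon_m}k'_m$ in HNN-reduced form -- with each $h'_i\in F(A^*,B^*)$ and $k'_i\in F(B^*,C^*)$ by Lemma 5.1 of \cite{C} -- and invoke the Normal Form Equalities described in the preamble of \fullref{sec4}. The first, $gh'_0=k'_0z_0$ with $z_0\in L\cup U$, yields $g=k'_0z_0(h'_0)^{-1}$; substituting into $ghg^{-1}=k$ gives a new conjugation $z_0\tilde h z_0^{-1}=\tilde k$ in $G^*$, with $\tilde h=(h'_0)^{-1}hh'_0\in F(A^*,B^*)$ nontrivial and $\tilde k=(k'_0)^{-1}kk'_0\in F(B^*,C^*)$. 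A second application of the inductive hypothesis to $z_0$ either lands $z_0$ in $F(B^*,C^*)F(A^*,B^*)$ -- again forcing $g\in F(B,C)F(A,B)$ for the required contradiction -- or makes $z_0F(A^*,B^*)z_0^{-1}\cap F(B^*,C^*)$ cyclic. The main obstacle is to rule out this last possibility; my plan is to extract a second independent conjugation relation $\overline{z_{m-1}}\hat h\overline{z_{m-1}}^{-1}=\hat k$ from the symmetric Normal Form Equality $\overline{z_{m-1}}h'_m=k'_mg$ at the other end, and to combine the cyclicity information on both intersections with the restricted position of $z_0$ and $\overline{z_{m-1}}$ inside the edge subgroups $L,U$, using the technical machinery of Sections 5--6 of \cite{C}, to contradict either the counterpair hypothesis or the rank-two freeness of $\{h,h'\}$.
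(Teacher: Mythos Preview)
Your treatment of the case $l_b(h')=0$ is correct and matches the paper: with $g\in G^*$ and both $h,h'\in F(A^*,B^*)$, $k,k'\in F(B^*,C^*)$, the overall inductive hypothesis on $G^*$ immediately rules out a counterpair.

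The gap is in the case $l_b(h')>0$. Your setup (substituting $g=k'_0z_0(h'_0)^{-1}$ to get $z_0\tilde h z_0^{-1}=\tilde k$) is fine, and you are right that the inductive hypothesis forces $z_0F(A^*,B^*)z_0^{-1}\cap F(B^*,C^*)$ to be cyclic. But your plan to ``combine the cyclicity information on both intersections'' has no concrete mechanism and, as far as I can see, cannot work: from the two ends you only get two \emph{separate} conjugates of the single element $k$, each lying in its own cyclic group, and no second independent element of either intersection (since $k'\notin G^*$). There is nothing here to contradict rank-two freeness.

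The paper's argument uses several ingredients you do not mention. First, it fixes a counterpair with $l_b(h')$ \emph{minimal} among those with $l_b(h)=0$; this minimality is what later allows decompositions of $h'$ to yield commuting relations. Second, it does not use mere cyclicity of the intersection but the precise structure of the exceptional equality: applying Proposition~5.1 of \cite{C} to $z_0hz_0^{-1}=k$ (with $z_0\in U$ of type $(A^*\!:\!C^*)$) pins $z_0$ down as $\tilde v$ in a basic exceptional relation $u=\tilde v^{-1}v_0\tilde v$, i.e.\ forces $v_1=v_2$. Third, a separate Subclaim shows $h'$ must have uniform $b$--signature, by locating an exceptional equality for $F(A^*,B^*,C^*_-)\cap F(B^*,C^*)$ at the first sign change, invoking the single--syllable criterion (Proposition~5.5 of \cite{C}) to identify $p_1\equiv p_2\equiv\tilde v$, and then using minimality of $l_b(h')$ to force $h$ to commute with both halves of $h'$. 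Finally, with uniform signature the chain of Normal Form Equalities $\overleftarrow{\tilde v}h'_1=k'_1z_1,\ldots,\overleftarrow{z_{n-1}}h'_n=k'_n\tilde v$ is analysed with Propositions~\ref{Proposition 5.2} and \ref{Proposition 5.5} of the present paper to conclude that $\tilde v$ coincides with an iterated downshift of itself, which is impossible. None of this structure is recoverable from the bare statement that two intersections are cyclic; you need the exceptional--intersection machinery and the minimality device, not just the inductive hypothesis.
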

\begin{proof} Suppose not; clearly there is no loss of generality in assuming $l_b(h) =l_b(k) =0$. Among all such counterpairs, we choose one with $l_b(h') = l_b(k')$ minimal. If $l_b(h') = l_b(k')=0$, then $h,h' \in F(A^*,B^*), k, k' \in F(B^*,C^*)$ and we have an immediate contradiction to the overall induction hypothesis. 

So we can assume that $l_b(h') = l_b(k')>0$. We can write
$$gh'g^{-1} = gh'_0b^{\varepsilon_1}h'_1 \ldots b^{\varepsilon_n}h'_ng^{-1} = k'_0b^{\varepsilon_1}k'_1 \ldots b^{\varepsilon_n}k'_n =k'$$ and there is no loss of generality in assuming that $\varepsilon_1 =1$. Adjusting the equality so that it is standardised form, we obtain   $g=k'_0z_0{h'_0}^{-1}$, where $z_0 \in U$ is nontrivial of type $(A^*:C^*)$. Substituting for $g$ and replacing the original $h,k,h'$ and $k'$ by the resulting conjugates, we can rewrite, adjusting our notation, the two equalities in the form 
$$z_0 h z_0^{-1} = k, \quad z_0h'z_0^{-1} \equiv z_0bh'_1b^{\varepsilon_2} \ldots b^{\varepsilon_n}h'_nz_0^{-1} = bk'_1b^{\varepsilon_2} \ldots b^{\varepsilon_n}k'_n \equiv k'.$$  
Since $z_0 \neq 1$ and $k \notin F(B^*,C^*_+)$, because $z_0 h z_0^{-1} = k$ cannot hold in $F(A^*,B^*,C^*_+)$, the equality $h = z_0^{-1} kz_0$ must define an exceptional element of the intersection $F(A^*,B^*) \cap F(A^*_+,B^*,C^*)$. By Proposition 5.1 of \cite{C}, the basic exceptional relation is either of the form $u=v_0v_2$, with $z_0 \equiv v_2$ or  $u=\tilde v^{-1}v_0\tilde v$ with $z_0 \equiv \tilde v$.  The former implies that $z_0 = v_0^{-1}u \in F(B,C)F(A,B)$, which we can rule out since $z_0$ is nontrivial of type $(A^*:C^*)$, and so the latter must hold.  So we now have a counterpair of the form 
$$\tilde v h \tilde v^{-1} =k, \quad \tilde v h'\tilde v^{-1} = \tilde vbh'_1b^{\varepsilon_2} \ldots b^{\varepsilon_n}h'_n\tilde v^{-1} = bk'_1b^{\varepsilon_2} \ldots b^{\varepsilon_n}k'_n =k'.$$ 

\begin{sclm} $h'$ has uniform signature pattern.
\end{sclm}
\begin{proof} Suppose not; since the initial occurrence of $b$ in $h'$ has exponent $+1$, in the system of equalities yielded by the normal form theorem, we find, for some $j$ (which can be chosen minimal), $z_{j-1} \in U, w_j \in L$ and ${\overleftarrow {z_{j-1}}} h'_j = k'_jw_j$ so that ${\overleftarrow {z_{j-1}}} h'_jw_j^{-1} = k'_j$ defines an element of $F(A^*,B^*,C^*_-) \cap F(B^*,C^*)$ which must be exceptional since $z_{j-1} \neq 1$. Hence we can write $F(A^*,B^*,C^*_-) \cap F(B^*,C^*) = \langle p \rangle * F(B^*,C^*_-) = \langle q \rangle * F(B^*,C^*_-) $  where $p \in F(A^*,B^*,C^*_-)$ and $q \in  F(B^*,C^*)$. Corollary 5.4 of \cite{C} implies that $p = p_1p_0p_2^{-1}$, with at least one of $p_1,p_2$ nontrivial -- for otherwise $F(A^*,B^*) \cap F(B^*,C^*)$ would be exceptional, and then, by Proposition 5.2 of \cite{C}, $\tilde v$ would be trivial. 

 Now the hypotheses of Proposition 5.5 of \cite{C} are satisfied and therefore each extremal generator appears in just a single syllable of $u^{-1}\tilde v^{-1}v_0\tilde v$. Furthermore $p_2p_0^{-1}p_1q$ is a cyclic rearrangement of $u^{-1}\tilde v^{-1}v_0\tilde v$. This means that $\tilde v$ is \textit{intermediate} (ie omits all four \textit{extremal} generators $\{a_{\kappa}, a_{\lambda}, c_{\mu}, c_{\nu}\}$ -- see page 273 of  \cite{C}) and that $p_1 \equiv p_2 \equiv \tilde v$.  It follows that ${\overleftarrow {z_{j-1}}} \equiv \tilde v \equiv w_j$ 
and hence that 
\begin{eqnarray*} \tilde v h'_0bh'_1 \ldots bh'_j &=& k'_0bk'_1 \ldots bk'_j \tilde v \\  \tilde vb^{-1}h'_{j+1}b^{\varepsilon_{j+1}} \ldots b{\varepsilon_n}h'_n &=& b^{-1}k'_{j+1}b^{\varepsilon_{j+1}} \ldots b^{\varepsilon_n}k'_n \end{eqnarray*} 
yielding, of course,
\begin{eqnarray*}\tilde v h \tilde v^{-1} &=& k \\
  \tilde vh'_0bh'_1 \ldots bh'_j\tilde v^{-1} &=& k'_0bk'_1 \ldots bk'_j \\ \tilde vb^{-1}h'_{j+1}b^{\varepsilon_{j+1}} \ldots b^{\varepsilon_n}h'_n\tilde v^{-1} &=& b^{-1}k'_{j+1}b^{\varepsilon_{j+1}} \ldots b^{\varepsilon_n}k'_n. \end{eqnarray*} 
Since $h'$ is the product of $h'_0bh'_1 \ldots bh'_j$ and $b^{-1}h'_{j+1}b^{\varepsilon_{j+1}} \ldots b^{\varepsilon_n}h'_n$ and each has $b$--length less than $h'$, it follows that $h$ commutes with both and therefore with $h'$, which is a contradiction.
\end{proof}

In completing the proof of \fullref{Claim 4.1}, we can thus assume that $h'= bh'_1b\ldots bh'_n$ and $k'= bk'_1b\ldots bk'_n$. However it should be noted that, unlike in the proof of the Subclaim, we do not have any information about $F(A^*,B^*,C^*_{-}) \cap F(B^*,C^*)$ and so we do not at present know that $\tilde v$ is intermediate.

We obtain the usual system of equalities
$${\overleftarrow {\tilde v}} h'_1 = k'_1z_1, \ldots , {\overleftarrow {z_{n-1}}} h'_n = k'_n\tilde v.$$   If all these equalities hold freely, then we obtain $$h'_i=k'_i, i=1,2, \ldots,n \quad \hbox{and}
\quad {\overleftarrow {\tilde v}} \equiv z_1, {\overleftarrow {z_1}} \equiv z_2,\ldots , {\overleftarrow {z_{n-1}}} \equiv \tilde v$$ which is clearly impossible.  It follows, therefore that some equality does not hold freely and we have to analyse the sequence $$ {\overleftarrow {\tilde v}} h'_1 = k'_1z_1, {\overleftarrow {z_1}} h'_2 = k'_2z_2, \ldots {\overleftarrow {z_{n-1}}} h'_n = k'_n\tilde v.$$  
To complete the argument for Case 4.1 we require two further results which are similar in nature to Proposition 6.2 of \cite{C}.  These are stated and proved in \fullref{sec5} and will also be used below.

Since one of the inequalities does not hold freely, it follows from \fullref{Proposition 5.2} below that $\tilde v$ must be intermediate and hence \fullref{Proposition 5.5} below can be applied. If $n=1$ we have $$ {\overleftarrow {\tilde v}} h_1 = k_1\tilde v.$$ This does not hold freely and so \fullref{Proposition 5.5}(c) applies, giving $${\overleftarrow {\tilde v}} \equiv \tilde v$$ which is impossible.  More generally, pick the least $i$ such that ${\overleftarrow {z_{i-1}}} h_i = k_iz_i$, does not hold freely, with the appropriate interpretation for $i=1$ or $i=n$. Then $${\overleftarrow {\tilde v}} \equiv z_1, \quad{\overleftarrow {z_1}} \equiv z_2,\quad \ldots \quad{\overleftarrow {z_{i-2}}} \equiv z_{i-1}$$ and so $L(\overleftarrow{z_{i-1}}) = L(\tilde v)$. Again  \fullref{Proposition 5.5}(c) applies giving ${\overleftarrow {z_{i-1}}} \equiv \tilde v$ and we have another impossible situation.  This is the contradiction we require to conclude the proof of \fullref{Claim 4.1}.  \end{proof}

Unfortunately this is the point at which the argument becomes even more complicated.

Maintaining our notation $ghg^{-1} =k, gh'g^{-1}=k'$ for counterpairs,  we shall write $\rho_b(h), \rho_b(h')$ for the number of times that $b$ changes sign in reduced expressions for $h, h'$.   We essentially argue by induction on $\rho_b(h)+\rho_b(h')$.

\begin{clm}\label{Claim 4.2} There do not exist counterpairs $ghg^{-1}=k, gh'g^{-1}=k'$ satisfying $l_b(g)=0$ and min$\{l_b(h),l_b(h')\} > 0$ such that $\rho_b(h) +\rho_b(h')= 0$.
\end{clm}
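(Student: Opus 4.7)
The plan is to argue by induction on $l_b(h)+l_b(h')$, combining the normal-form bookkeeping used in the proof of \fullref{Claim 4.1} with the exceptional-intersection machinery provided by Proposition 5.1 of \cite{C} and by \fullref{Proposition 5.5} below.

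First I would normalise the counterpair: since $\rho_b(h)=\rho_b(h')=0$, after possibly replacing $h$ by $h^{-1}$ and $h'$ by $h'^{-1}$, every occurrence of $b$ in $h$ and in $h'$ can be taken to carry exponent $+1$. Writing
\[h = h_0bh_1\cdots bh_m,\qquad h' = h'_0bh'_1\cdots bh'_n,\]
and similarly for $k,k'$, Lemma 5.1 of \cite{C} places $h_i,h'_j\in F(A^*,B^*)$ and $k_i,k'_j\in F(B^*,C^*)$, and the Normal Form Theorem then yields two chains of NFEs
\[gh_0=k_0z_0,\ \overleftarrow{z_0}h_1=k_1z_1,\ \ldots,\ \overleftarrow{z_{m-1}}h_m=k_mg,\]
\[gh'_0=k'_0z'_0,\ \overleftarrow{z'_0}h'_1=k'_1z'_1,\ \ldots,\ \overleftarrow{z'_{n-1}}h'_n=k'_ng,\]
in which every $z_i,z'_j\in U$ is nontrivial and of type $(A^*_+:C^*_+)$, in accordance with the standardisation discussed at the opening of \fullref{sec4}.

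Next I would eliminate $g$ between the two leading equalities. From $g=k_0z_0h_0^{-1}=k'_0z'_0(h'_0)^{-1}$ we obtain
\[(k'_0)^{-1}k_0\,z_0\,h_0^{-1}h'_0 \ =\ z'_0 \ \in\ U,\]
an equality in $G^*$ placing an element a priori in $F(B^*,C^*)\cdot U\cdot F(A^*,B^*)$ inside $U$. Viewing this as an exceptional-intersection relation, I would invoke Proposition 5.1 of \cite{C} (following the pattern that yielded $u=\tilde v^{-1}v_0\tilde v$ in \fullref{Claim 4.1}) to pin down the shapes of $(k'_0)^{-1}k_0$ and $h_0^{-1}h'_0$ and to conclude that $z_0$ and $z'_0$ are conjugate by an \emph{intermediate} element. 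The symmetrical manoeuvre applied to the two trailing equalities $\overleftarrow{z_{m-1}}h_m=k_mg$ and $\overleftarrow{z'_{n-1}}h'_n=k'_ng$ provides the dual rigidity on the other side.

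With this rigidity in hand, the strategy is to use the intermediate element to manufacture a counterpair of strictly smaller complexity: conjugating $ghg^{-1}=k$ and $gh'g^{-1}=k'$ by that element and absorbing the leading syllables $h_0,h'_0$ into the conjugator, the leading NFEs of both chains collapse, producing a counterpair with strictly smaller $l_b(h)+l_b(h')$. This either contradicts the inductive hypothesis or falls into the $\min\{l_b(h),l_b(h')\}=0$ situation already ruled out by \fullref{Claim 4.1}. Should no such reduction be available, all the NFEs must hold freely after standardisation, whereupon \fullref{Proposition 5.5}(c) forces a chain of identifications $\overleftarrow{z_i}\equiv z_{i+1}$ culminating in $\overleftarrow{z_{m-1}}$ being identified with the original intermediate element, which is incompatible with the subscript behaviour of a nontrivial element of type $(A^*_+:C^*_+)$ under downshift.

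The hard part is that we now have two NFE chains of positive length running in parallel: the exceptional-intersection data extracted from the leading equalities of the two chains must be compatible with that extracted from the trailing equalities of both, and one must rule out the spurious scenarios in which the attempted reduction merely forces $h$ and $h'$ to commute rather than producing a genuinely simpler counterpair. This compatibility bookkeeping, together with the need to verify at each stage that no ``collapse'' among $A^*_{\pm},C^*_{\pm}$ has been quietly invoked, is what makes the remainder of the section substantially more intricate than the argument for \fullref{Claim 4.1}.
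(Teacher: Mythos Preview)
Your outline has the right opening moves---normalise the signature, write out the two NFE chains, and eliminate $g$ between the leading equalities---but the argument goes off the rails at the point where you invoke Proposition~5.1 of \cite{C}.  The eliminated relation is
\[
h_0^{-1}h'_0 \;=\; z_0^{-1}\,(k_0^{-1}k'_0)\,z'_0,
\]
which involves \emph{two} distinct auxiliary elements $z_0,z'_0\in U$.  This is not the situation of \fullref{Claim 4.1}, where a single $z_0$ appears on both sides and Proposition~5.1 of \cite{C} forces the conjugate form $u=\tilde v^{-1}v_0\tilde v$.  Here the exceptional equality, when it occurs, takes the general shape $u=v_1^{-1}v_0v_2$ with $\{z_0,z'_0\}$ in the roles of $\{v_1,v_2\}$, and there is no reason whatsoever that $v_1$ and $v_2$ should coincide or be intermediate.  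Your conclusion that ``$z_0$ and $z'_0$ are conjugate by an intermediate element'' is simply not available.

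More seriously, your dichotomy ``either reduce $l_b(h)+l_b(h')$, or all NFEs hold freely'' is backwards.  The eliminated equality may hold \emph{freely}, in which case $h_0=h'_0$, $k_0=k'_0$, $z_0=z'_0$ and one passes to the next pair; if this persists $\min\{m,n\}$ times the reduction is achieved by a Nielsen move $h'\mapsto h^{-1}h'$, not by conjugation.  The interesting case is precisely when the elimination \emph{fails} to be free at some stage, and it is then that both $F(A^*,B^*)\cap F(A^*_+,B^*,C^*)$ and (working from the back) $F(A^*,B^*,C^*_-)\cap F(B^*,C^*)$ are exceptional.  At that point the paper conjugates to put $v_1$ (respectively $v_2$) in the role of $g$ and splits into three cases according as neither, both, or exactly one of $v_1,v_2$ is intermediate, invoking respectively \fullref{Proposition 5.2}, \fullref{Proposition 5.5}, and Proposition~6.2 of \cite{C}.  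The downshift contradiction $v_1(\overleftarrow m)\equiv v_1$ that you allude to does arise, but only inside this case analysis and for different reasons in each case; it is not a global endgame reached after ``all NFEs hold freely''.  Without this trichotomy the argument does not close.
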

\begin{proof} Suppose not; then, without loss of generality, there exists a counterpair \begin{align*}ghg^{-1} &= gh_0bh_1\ldots h_{m-1}bh_mg^{-1} = k_0bk_1 \ldots  k_{m-1}bk_m  = k \\ gh'g^{-1} &= gh'_0bh'_1\ldots h'_{n-1}bh'_ng^{-1} = k'_0bk'_1 \ldots  k'_{n-1}bk'_n  = k.\tag*{\hbox{and}}\end{align*} Among all such pairs we choose one such than $m+n =l_b(h)+l_b(h')$ is minimal.

 To begin with we have no information at all about exceptional intersections within $G^*$.  The first pair of Normal Form equalities are $gh_0=k_0z_0$ and $gh'_0=k'_0z'_0$.  Certainly $z_0,z'_0$ are both nontrivial, for otherwise $g \in F(B,C)F(A,B)$, and we can eliminate $g$ to obtain $h_0^{-1}h'_0 = z_0^{-1}k_0^{-1}k'_0z'_0$.  This equality may hold freely, for instance when $F(A^*,B^*) \cap F(A^*_+,B^*,C^*)$ is not exceptional, and then we can only have $h'_0 = h_0, k'_0 = k_0$ and $z'_0 \equiv z_0$. In this event we can then eliminate $ \overleftarrow {z_0}$ from the second pair of Normal Form equalities and analyse the resulting equality. Either we can continue to make such eliminations, successively identifying terms from the first member of the counterpair with the corresponding terms of the second or we will encounter an exceptional equality for $F(A^*,B^*) \cap F(A^*_+,B^*,C^*)$ after elimination.  If the first possibility occurs min$\{m,n\}$ times then we perform a ``Nielsen operation'' on our counterpair and contradict the minimality of $m+n$ (or obtain a counterpair with min$\{l_b(h), l_b(h')=0\}$ contradicting \fullref{Claim 4.1}). For instance if $m < n$, we obtain $$gh^{-1}h'g^{-1}= gh_m^{-1}h'_mbh'_{m+1}\ldots bh'_ng^{-1} = k_m^{-1}k'_mbk'_{m+1}\ldots bk'_n = k^{-1}k$$ (and $gh_m^{-1}h'_mg^{-1} =k_m^{-1}k'_m$ if $n=m$). A similar argument applies to the two Normal Form systems when working from the last pair back towards the first pair, only this time the elimination and identification process breaks down when we find an exceptional equality for $F(A^*,B^*,C^*_-) \cap F(B^*,C^*)$.

If both elimination and identification processes break down, then we know that both $F(A^*,B^*) \cap F(A^*_+,B^*,C^*)$ and $F(A^*,B^*,C^*_-) \cap F(B^*,C^*)$ are exceptional.  Suppose that, starting from the front, the breakdown occurs with $\smash{h_l^{-1}h'_l = z_l^{-1}k_l^{-1}k'_lz'_l}$. Then Proposition 5.2 of \cite{C} gives us a basic exceptional relator of the form $u^{-1}v_1^{-1}v_0v_2$ with, since we are free at this point to make a choice, $v_1 \equiv z_l, v_2 \equiv z'_l$. In addition, $h'_i=h_i,k'_i = k_i, z'_i=z_i, 1 \leq i \leq l-1$ and \begin{align*} gh_0bh_1\ldots h_{l-1}bh_l &= k_0bk_1\ldots k_{l-1}bk_lv_1 \\ gh_0bh_1\ldots h_{l-1}bh'_l &= k_0bk_1\ldots k_{l-1}bk'_lv_2\tag*{\hbox{and}}\end{align*}  Conjugating both equalities by $gh_0bh_1\ldots h_{l-1}bh_lv_1^{-1} = k_0bk_1\ldots k_{l-1}bk_l$ yields 
$$\displaylines{v_1bh_{l+1}b\ldots h_{m-1}bh_mh_0bh_1\ldots h_{l-1}bh_lv_1^{-1}\hfill \cr\hfill= bk_{l+1}b\ldots k_{m-1}bk_mk_0bk_1\ldots k_{l-1}bk_l ,\cr v_1h_l^{-1}h'_lbh'_{l+1}b\ldots bh'_nh_0bh_1\ldots h_{l-1}bh_lv_1^{-1}\hfill\cr \hfill= k_l^{-1}k'_lbk'_{l+1}b\ldots bk'_nk_0bk_1\ldots k_{l-1}bk_l.}$$   Relabelling, we have obtained a counterpair \begin{align*} v_1hv_1^{-1}  &=  v_1bh_1b \ldots bh_mv_1^{-1}  =  bk_1b \ldots bk_m  =  k ,\\
v_1h'v_1^{-1} &=  v_1h'_0bh'_1b \ldots bh'_nv_1^{-1}  =  k'_0bk'_1 \ldots bk'_n  =  k'.\end{align*} 
Moreover the exceptional equality for $F(A^*,B^*) \cap F(A^*_+,B^*,C^*)$ has become the initial Normal Form equality for $v_1h'v_1^{-1} =k'$ and hence we can rewrite the second equality in our counterpair as $$v_2bh'_1b \ldots bh'_nh'_0v_2^{-1} = bk'_1 \ldots bk'_nk'_0.$$
We shall use the results of  \fullref{sec5} and Proposition 6.2  of \cite{C} to derive a contradiction.  We consider three cases according as $v_1$ and $v_2$ are or are not intermediate.

\textbf{Case A}\qua Suppose neither $v_1$ nor $v_2$ is intermediate.  Then, by \fullref{Proposition 5.2},  all the equalities in the Normal Form system for $v_1bh_1b \ldots bh_mv_1^{-1} = bk_1b \ldots bk_m$ must hold freely.  In detail, then, we have 
$$\overleftarrow {v_1} \equiv z_1, \overleftarrow {z_1} \equiv z_2, \ldots, \overleftarrow {z_{m-2}} \equiv z_{m-1}, \overleftarrow {z_{m-1}} \equiv v_1$$ and hence $v_1$ is the \textit{$m$--fold downshift} $v_1(\overleftarrow m)$ of itself, which is contradictory.

\textbf{Case B}\qua Suppose both $v_1$ and $v_2$ are intermediate (and thus both $p_1$ and $p_2$ are intermediate).

Without loss of generality we can suppose that $L(v_1) = \hbox{min}\{L(v_1),L(v_2)\}$. We shall apply \fullref{Proposition 5.5} to the sequence of Normal Form Equalities derived from $$v_1bh_1b \ldots bh_mv_1^{-1} = bk_1b \ldots bk_m.$$ 
If $m=1$, there is only a single equality $\overleftarrow {v_1}h_1 = k_1v_1$. The inequality hypothesis of \fullref{Proposition 5.5} is valid, by our assumption that $L(v_1) = \hbox{min}\{L(v_1),L(v_2)\}$. The given equality cannot hold freely, since $v_1$ is distinct from $\overleftarrow {v_1}$, and, for the same reason, $\overleftarrow {v_1}h_1 = k_1v_1$ must be $v_2u^{-1}=v_0v_1$. In particular, we have $\overleftarrow {v_1} \equiv v_2$ and $L(v_2) = L(v_1) = \hbox{min}\{L(v_1),L(v_2)\}$. If we switch our attention to $v_2bh'_1b \ldots bh'_nh'_0v_2^{-1} = bk'_1b \ldots bk'_nk'_0$, then the first Normal Form equality is $\overleftarrow {v_2}h'_1 = k'_1z'_1$ and again we can apply \fullref{Proposition 5.5}. However this equality cannot be $v_2u^{-1}=v_0^{-1}v_1$, since we cannot have $\overleftarrow {v_2} \equiv v_2$, and it cannot be $v_1u=v_0v_2$, since this would give $\overleftarrow {v_2} \equiv v_1$ which is inconsistent with $\overleftarrow {v_1} \equiv v_2$.  The equality must therefore hold freely and so $z'_1 \equiv \overleftarrow {v_2}$. Now this argument iterates -- for suppose we have obtained, via free equalities, $z'_j$ as the $j$--fold downshift $v_2( \olaj)$.   Then the next Normal Form equality is $v_2( \olajpo \smash{h'_{j+1} = k'_{j+1}z'_{j+1}}$.  The possibility that this is $v_2u^{-1}=v_0v_2$ is ruled out by the fact that we cannot have $v_2( \olajpo)\equiv v_2$ while the possibility of $v_1u=v_0v_2$ is ruled out by  the inconsistency of $v_2( \olajpo) \equiv v_1$ with $\overleftarrow {v_1} \equiv v_2$.  Hence the only possibility is that $z'_{j+1} = v_2( \olajpo)$ freely.  Eventually, then we reach a final comparison $v_2( \smash{\overleftarrow n}) \equiv v_2$ and we have a contradiction.  (The iteration, of course, is unnecessary when also $n=1$.)

Now we have to dispose of the case when $m>1$. If we can show that the Normal Form equalities must all hold freely, then we have the same contradiction as in the previous case. The first equality is $\overleftarrow {v_1}h_1 = k_1z_1$; we do have $\hbox{min}\{L(\overleftarrow {v_1}),L(z_1)\} \leq L(v_1) = \hbox{min}\{L(v_1),L(v_2)\}$ and hence \fullref{Proposition 5.5} applies. If $\overleftarrow {v_1}h_1 = k_1z_1$ does not hold freely, then it is an instance of either $v_1u=v_0v_2$ or $v_2u^{-1}=v_0v_1$.  The former yields $\overleftarrow {v_1} \equiv v_1$, which is impossible, so we have to consider the latter which gives $\overleftarrow z_1 \equiv v_1$.  However in this case we have $v_1bh_1 = bk_1v_1$ and therefore $v_1bh_1b \ldots bh_mv_1^{-1} = bk_1b \ldots bk_m$ decomposes into the two equalities $v_1bh_1v_1^{-1}=bk_1$ and $v_1bh_2\ldots bh_mv_1v_1^{-1}=bk_2 \ldots bk_m$. The minimality of $m+n$ implies that both $bh_1$ and $bh_2\ldots bh_m$ commute with $h'$ whence of course so does $h$. So we can rule out the second possibility and conclude that $\overleftarrow {v_1}h_1 = k_1z_1$ holds freely.  We can now iterate this whole argument -- at each stage we can rule out both versions of case (b) of \fullref{Proposition 5.5}, the first because it implies that $v_1$ coincides with a multiple downshift of itself and the second because it implies that $v_1bh_1b \ldots bh_mv_1^{-1} = bk_1b \ldots bk_m$ decomposes.  Hence all the Normal Form equalities hold freely and as a result we deduce that $v_1$ is the same as its $m$--fold downshift $v_1( \overleftarrow m)$ and this is impossible.  This concludes the argument for Case B.

\textbf{Case C}\qua Suppose that one of $v_1$ and $v_2$ is intermediate and the other is not.

If $v_1$ is intermediate, we use the Normal Form equalities for $v_1bh_1b \ldots bh_mv_1^{-1} = bk_1b \ldots bk_m$. We can apply Proposition 6.2 of \cite{C} to the equality ${\smash{\overleftarrow {v_1}}}h_1 = k_1z_1$, since we have $L({\overleftarrow {v_1}}) = L(v_1) = d(a_{\kappa}, c_{\mu})$; see the foot of page 295 of \cite{C} where this notation is explained.  This equality either holds freely or is an instance of $v_1u=v_0v_2$.  However the latter is clearly impossible since it yields ${\overleftarrow {v_1}}\equiv v_1$ and so the equality holds freely. (When $m=1$, this is the required contradiction.) In particular $L(z_1) = L(v_1)$ and therefore $L({\overleftarrow {z_1}}) = L(v_1)$ so that Proposition 6.2 of \cite{C} also applies to ${\overleftarrow {z_1}}h_2 = k_1z_2$.  This must also hold freely since otherwise ${\overleftarrow {z_1}} \equiv v_1$ which is impossible since in fact ${\overleftarrow {z_1}} \equiv v_1(\smash{\overleftarrow 2})$.  Clearly this argument can be iterated and eventually we obtain the contradictory conclusion that $v_1(\overleftarrow m)\equiv v_1$.  When $v_2$ is intermediate, the same argument applies to the Normal Form equalities for $v_2bh'_1b \ldots bh'_nh'_0v_2^{-1} = bk'_1b \ldots bk'_nk'_0$.  This concludes the argument for \fullref{Claim 4.2}. \end{proof}

\begin{clm}\label{Claim 4.3} There do not exist counterpairs $ghg^{-1}=k, gh'g^{-1}=k'$ satisfying $l_b(g)=0$ and min$\{l_b(h),l_b(h')\} > 0$ such that $\rho_b(h) +\rho_b(h')= 1$.
\end{clm}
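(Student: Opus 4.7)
The plan is to mirror the architecture of \fullref{Claim 4.2} and carry out a further subsidiary argument, again by choosing a counterpair minimising $m+n=l_b(h)+l_b(h')$, and writing $h=h_0b^{\varepsilon_1}h_1\cdots b^{\varepsilon_m}h_m$, $h'=h'_0b^{\eta_1}h'_1\cdots b^{\eta_n}h'_n$ together with the corresponding expressions for $k,k'$. Without loss of generality $\rho_b(h)=0$ and $\rho_b(h')=1$, so all $\varepsilon_i$ agree in sign while the $\eta_j$ change sign exactly once, say $\eta_1=\cdots=\eta_s=+1$ and $\eta_{s+1}=\cdots=\eta_n=-1$; by passing to $h^{-1}$ if necessary we may also take $\varepsilon_i=+1$ for all $i$. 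Both normal form systems then start with $gh_0=k_0z_0$ and $gh'_0=k'_0z'_0$ with $z_0,z'_0\in U$ nontrivial of type $(A^*:C^*)$, and the analysis proceeds as in \fullref{Claim 4.2}.

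Next I would run the front‑end elimination/identification procedure simultaneously on both systems. Either identification succeeds far enough to contradict minimality of $m+n$ (by combining terms to produce a shorter counterpair, or, if one side is exhausted, one reaching $\min\{l_b(h),l_b(h')\}=0$ and contradicting \fullref{Claim 4.1}), or the process breaks down at some index $l$ with an exceptional equality for $F(A^*,B^*)\cap F(A^*_+,B^*,C^*)$. In the latter case \cite{C}, Proposition 5.2, supplies a basic exceptional relator $u^{-1}v_1^{-1}v_0v_2$ and, after conjugation and relabelling exactly as in \fullref{Claim 4.2}, I obtain a new counterpair of the form $v_1hv_1^{-1}=k$ (with $h$ still of uniform positive signature) and $v_2h'v_2^{-1}$ equal to a cyclic rearrangement of $k'$. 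The symmetric back‑end procedure, run from the trailing end of each word, either closes things up or produces an analogous exceptional equality for $F(A^*,B^*,C^*_-)\cap F(B^*,C^*)$; the novelty is that because $\rho_b(h')=1$, the back‑end analysis of $h'$ must be applied only after splitting $h'$ at the sign change into its positive and negative blocks, so the exceptional equality on the $h'$ side may arise at the sign‑change position itself and involves $F(A^*,B^*,C^*_-)\cap F(B^*,C^*_+)$.

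With $v_1,v_2$ in hand, I would then split into the three cases from \fullref{Claim 4.2}: neither $v_i$ intermediate, both intermediate, or exactly one intermediate. For the system $v_1bh_1\cdots bh_mv_1^{-1}=bk_1\cdots bk_m$, since $h$ has uniform signature the whole argument of \fullref{Claim 4.2} carries over verbatim: \fullref{Proposition 5.2} forces all normal form equalities to hold freely when no $v_i$ is intermediate, and \fullref{Proposition 5.5} together with Proposition 6.2 of \cite{C} yields in the remaining cases that $v_1$ (or $v_2$) coincides with an iterated downshift of itself, a contradiction. The new feature is the $h'$ system: because of the sign change at position $s$, the successive $z'_j$ alternate between $U$ and $L$ only after position $s$, and at $j=s$ one must handle a transition equality in which the shift direction reverses. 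Here I would apply the two new propositions of \fullref{sec5} separately to the block $j\le s$ and to the block $j>s$, using that the sign change forces $\overleftarrow{z'_s}\in L$ to be matched by an upshift on the next step; the resulting chain of identifications forces $v_2$ to coincide with an alternating sequence of up‑ and down‑shifts of itself, which is again impossible.

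The main obstacle, as in \fullref{Claim 4.2}, will be verifying that no auxiliary decomposition of the equality $v_1bh_1\cdots bh_mv_1^{-1}=bk_1\cdots bk_m$ (and its $h'$ analogue) can occur, since such a decomposition would otherwise produce, via the minimality of $m+n$, commutation of $h$ with subwords of $h'$—and the asymmetry $\rho_b(h)=0$, $\rho_b(h')=1$ makes the combinatorics at the sign‑change position delicate. Precisely, one must check that neither alternative of \fullref{Proposition 5.5}(b) can hold at the transition index $s$: the $v_1u=v_0v_2$ alternative would force $v_1\equiv v_2$, contradicting the asymmetric roles the two $v_i$ play on the two sides of the sign change, while the $v_2u^{-1}=v_0v_1$ alternative would split $h'$ into two subwords of strictly smaller $b$‑length each of which is forced to commute with $h$, yielding commutation of $h$ and $h'$ and the required final contradiction.
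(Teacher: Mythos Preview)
Your plan imports the architecture of \fullref{Claim 4.2} too literally and misses the key structural difference. In \fullref{Claim 4.2}, both $h$ and $h'$ have uniform positive signature, so the front-end elimination (comparing $gh_0=k_0z_0$ with $gh'_0=k'_0z'_0$) and the back-end elimination (comparing the last Normal Form equalities) live in symmetric contexts. Here $h'$ ends with $b^{-1}$, so its final Normal Form equality yields $h'_ng^{-1}={z'_n}^{-1}k'_n$ with $z'_n\in U$, while the final equality for $h$ (ending $b^{+1}$) yields $\overleftarrow{z_{m-1}}h_m=k_mg$ with $\overleftarrow{z_{m-1}}\in L$. These cannot be compared by a ``symmetric back-end procedure'' as you propose; there is no common elimination of $g$ from the two tails that lands in a single exceptional-intersection framework. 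What the paper does instead is to work with the \emph{three} equalities $gh_0=k_0z_0$, $gh'_0=k'_0z'_0$, $h'_ng^{-1}={z'_n}^{-1}k'_n$ (all with auxiliary elements in $U$), form the three pairwise eliminations, and analyse which pair of the three is exceptional for $F(A^*,B^*)\cap F(A^*_+,B^*,C^*)$. This trichotomy, not a front/back iteration, drives the argument.

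Several concrete pieces of your plan would fail or are unsubstantiated. You assert that after conjugation one obtains $v_1hv_1^{-1}=k$ and $v_2h'v_2^{-1}=\cdots$ with \emph{different} conjugators; in fact conjugation by a single element of $G$ gives both equalities the same conjugator, and one must first prove $v_1\neq v_2$ (the paper does this via \fullref{Claim 4.1}) before exploiting the distinction. The intersection $F(A^*,B^*,C^*_-)\cap F(B^*,C^*_+)$ you invoke at the sign change does not appear in the framework; the sign-change equality $\overleftarrow{z_{j-1}}h'_j=k'_jw'_j$ lives in $F(A^*,B^*,C^*_-)\cap F(B^*,C^*)$, and its exceptional basic relator $p_1p_0p_2^{-1}=q$ interacts with $u=v_1^{-1}v_0v_2$ via Proposition~5.5 of \cite{C}, which is where the real work lies. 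In the hardest subcase (exactly one of $v_1,v_2$ intermediate, with $v_2$ intermediate), the paper needs a genuinely new idea: it iteratively shows $w'_j$ is intermediate, then ``splices'' an inverted piece of one equality onto the other to produce two equalities of uniform signature to which \fullref{Claim 4.2} applies, and finally derives a contradiction from $h_j\notin F(A^*_-,B^*)$. And when the exceptional pair is equalities (1) and (2), a separate trick is needed: attack the sign-change term from both ends simultaneously. None of this is anticipated by your outline, and the claim that ``the whole argument of \fullref{Claim 4.2} carries over verbatim'' for the $h$ system is not right, since that argument essentially uses \emph{both} systems together.
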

\begin{proof} Suppose such counterpairs exist; then without loss of generality we can assume that there is a counterpair $ghg^{-1}=k, gh'g^{-1}=k'$ satisfying
\begin{enumerate}
\item[(i)] $m+n=l_b(h)+l_b(h')$ is minimal among all counterpairs satisfying $l_b(g)=0$, min$\{l_b(h),l_b(h')\} >0$ and $\rho_b(h) + \rho_b(h') =1$;

\item[(ii)] $\rho_b(h)=0 , \rho_b(h') =1$;
 
\item[(iii)] the initial occurrences of $b$ in $h$ and $h'$ have the same exponent (since we can invert $h$ if necessary) which, without loss of generality, we can take to be $+1$.
\end{enumerate}
Suppose then that we have \begin{align*}gh_0bh_1\ldots h_{m-1}bh_mg^{-1} &= k_0bk_1 \ldots  bk_m\\
gh'_0bh'_1b\ldots bh'_jb^{-1} \ldots b^{-1}h'_ng^{-1} &= k'_0bk'_1 \ldots bk'_jb^{-1} \ldots b^{-1}k'_n,\tag*{\hbox{and}}\end{align*} satisfying (i)--(iii). We note that, since we have a change of sign from positive to negative in $h'$, $F(A^*,B^*,C^*_-)\cap F(B^*,C^*)$ is exceptional.

We have three equalities $gh_0=k_0z_0$, $gh'_0=k'_0z'_0$ and $h'_ng^{-1}={z'_n}^{-1}k'_n$ (where $z'_n$ is a shorthand for $\smash{\overrightarrow{w\smash{'}_{n-1}\strut}}$) which yield equalities 
\begin{align}
{z'_0}^{-1}{k'_0}^{-1}k_0z_0 &= {h'_0}^{-1}h_0\label{eq1}\\
{z'_n}^{-1}k'_nk_0z_0 &= h'_nh_0\label{eq2}\\
{z'_n}^{-1}k'_nk'_0z'_0 &= h'_nh'_0\label{eq3}
\end{align}
upon elimination of $g$.  Each of these either holds freely or is an exceptional equality for $F(A^*,B^*) \cap F(A^*_+,B^*,C^*)$. If all three of these equalities hold freely -- and it is easy to see that if two hold freely then so will the third -- then we have $\smash{h_0 = h'_0 = {h'_n}^{\mskip-6mu -1}}$, $\smash{k_0 = k'_0 = {k'_n}^{-1}}$, and $\smash{z_0 = z'_0 = z'_n}$.  Conjugating
 both equalities by $gh_0b=k_0z_0b = k_0b\overleftarrow {z_0}$ then yields a counterpair $z_0\hat hz_0^{-1}=\hat k, z_0\hat h'z_0^{-1}=\hat k'$. If $n=2$, then min$\smash{\{l_b(\hat h),l_b(\hat h')\} =0}$ contradicting \fullref{Claim 4.1} while if $n>2$ and $j=n-1$, then \fullref{Claim 4.2} is contradicted.  Finally if $n>2$ and $j<n-1$, then $z_0\hat hz_0^{-1}=\hat k, z_0\hat h'z_0^{-1}=\hat k'$ satisfies the same hypotheses as the original counterpair in contradiction to condition (i). It follows therefore that we can assume that two of the three equalities obtained by elimination are exceptional for $F(A^*,B^*) \cap F(A^*_+,B^*,C^*)$ and therefore of the form $u^{\pm 1}=(v_1^{-1}v_0v_2)^{\pm 1}$. Since the equality ${\overleftarrow {z_{j-1}}} h'_j = k'_jw_j$ taken from the Normal Form equalities for $\smash{ghg^{-1}=k'}$ defines an exceptional equality for $F(A^*,B^*,C^*_-) \cap F(B^*,C^*)$, we know that Proposition 5.5 of \cite{C} applies.

Suppose then, that \eqref{eq2} and \eqref{eq3} are exceptional. We shall consider other cases below after we have completed the analysis for this case. Before getting into our main argument we need, firstly, to show that in the exceptional equality $u=v_1^{-1}v_0v_2$, we have $v_1 \neq v_2$.  Since \eqref{eq2} and \eqref{eq3} are exceptional, we obtain $\{z_0, z'_n\} = \{v_1,v_2\}= \{z'_0, z'_n\}$ and hence, that $z_0=z'_0$. Suppose, by way of contradiction, that $v_1=v_2$ so that the exceptional equality is $u=v_1^{-1}v_0v_1$. Then of course $z_0=z'_0=z'_n = v_1$. On the other hand, if we conjugate our original counterpair by $gh=k_0z_0 = k_0v_1$, then we obtain a new counterpair 
\begin{align*}v_1 \hat hv_1^{-1} &= v_1bh_1\ldots h_{m-1}bh_mh_0v_1^{-1} = bk_1 \ldots  bk_mk_0 = \hat k\\
v_1\hat h'v_1^{-1} &= v_1h_0^{-1}h'_0b\ldots bh'_jb^{-1} \ldots  b^{-1}h'_nh_0v_1^{-1} = k_0^{-1}k'_0b\ldots  bk'_jb^{-1} \ldots b^{-1}k'_nk_0\\ &\hphantom{= v_1h_0^{-1}h'_0b\ldots bh'_jb^{-1} \ldots  b^{-1}h'_nh_0v_1^{-1}\,}=\hat k'.\end{align*}  However it follows from \fullref{Claim 4.1} applied to the pair $$v_1uv_1^{-1} = v_0, \  v_1bh_1\ldots h_{m-1}bh_mh_0v_1^{-1} = bk_1 \ldots  bk_mk_0$$ that $u$ commutes with $\hat h$ and similarly that $u$ commutes with $\hat h'$ in $F(A^*,B^*)$. Since this means that $\hat h$ and $\hat h'$ commute we have the contradiction needed to ensure that $v_1 \neq v_2$.

We still have $\{z_0, z'_n\} = \{v_1,v_2\}= \{z'_0, z'_n\}$ and $z_0=z'_0$. By exercising a choice for our notation we can assume that $z_0=z'_0= v_1$ so that we can deduce from \eqref{eq2} and \eqref{eq3} that $k'_nk_0= v_0^{-1}=k'_nk'_0$ and $h'_nh_0 =u^{-1} = h'_nh'_0$.  This implies that $h_0=h'_0$ and $k_0=k'_0$ and so, conjugating our original counterpair by $gh_0 = k_0z_0 =k_0v_1$ as before we obtain \begin{align*}v_1 \hat hv_1^{-1} &= v_1bh_1\ldots h_{m-1}bh_mh_0v_1^{-1} = bk_1 \ldots  bk_mk_0 = \hat k\\
v_1\hat h'v_1^{-1}  &=  v_1h_0^{-1}h'_0bh'_1b\ldots bh'_jb^{-1} \ldots b^{-1}h'_nh_0v_1^{-1} \tag*{\hbox{and}}\\        &= k_0^{-1}k'_0bk'_1 \ldots bk'_jb^{-1} \ldots b^{-1}k'_nk_0  =  \hat k'.\end{align*}
Again we have to break the analysis down into three separate cases, depending on the properties of $v_1$ and $v_2$.

{\bf Case A}\qua Suppose that neither $v_1$ nor $v_2$ is intermediate.  Then we can apply \fullref{Proposition 5.2} to $v_1bh_1\ldots bh_mh_0v_1^{-1} = bk_1\ldots bk_mk_0$ to obtain the contradiction $v_1(\overleftarrow m) \equiv v_1$.

{\bf Case B}\qua Suppose that both $v_1$ and $v_2$ are intermediate.  Then both $p_1$ and $p_2$ are intermediate and $p_1=v_1, p_2=v_2$.  This means that the equality involving the change of sign decomposes at the change of sign and it follows then from \fullref{Claim 4.2} that $bh_1\ldots bh_mh_0$ commutes with the two constituent factors of $bh'_1b\ldots bh'_jb^{-1} \ldots h'_{n-1}b^{-1}h'_nh_0$ and hence with $\smash{bh'_1b\ldots bh'_jb^{-1} \ldots h'_{n-1}b^{-1}h'_nh_0}$ itself, which rules out this case.

{\bf Case C}\qua Suppose that one of $v_1$ or $v_2$ is intermediate and the other is not.  If $v_1$ is intermediate, then we can apply Proposition 6.2 of \cite{C} to the uniform signature equality as we did in Case C of \fullref{Claim 4.2}.  The problem case is when $v_2$ is intermediate and $v_1$ is not.

We have $\overleftarrow{v_1}h_1 = k_1z_1$ and $\overleftarrow{v_1}h'_1 = k_1z'_1$ (where temporarily we assume that $j \ge 2$).  If the resulting equality is exceptional for $F(A^*,B^*) \cap F(A^*_+,B^*,C^*)$, then $\{z_1,z'_1\}=\{v_1,v_2\}$ and so $v_1bh_1=bk_1v_1$ whence $v_1bh_1v_1^{-1}=bk_1$ or $v_1bh_1=bk_1v_2=bk_1v_0^{-1}v_1u$ whence $v_1bh_1u^{-1}v_1^{-1} =bk_1v_0^{-1}$.  In either case we can decompose $\smash{v_1bh_1\ldots bh_mh_0v_1^{-1} = bk_1\ldots bk_mk_0}$. Since each of the two factors of $bh_1\ldots bh_mh_0$ contains fewer than $m$ occurrences of $b$ we can use the minimality of $m+n$ in (i) to deduce that each factor of $bh_1\ldots bh_mh_0$, and hence $bh_1\ldots bh_mh_0$ itself, will commute with $bh'_1b\ldots bh'_jb^{-1} \ldots h'_{n-1}b^{-1}h'_nh_0$, again giving us a contradiction.  We can iterate this argument, either until we exhaust the uniform signature pattern equality (when $m \leq j$) or until we have obtained $ h_1=h'_1, \ldots h_{j-1} = h'_{j-1}, k_1=k'_1, \ldots k_{j-1} = k'_{j-1}$ and $z_1=z'_1, \ldots z_{j-1} = z'_{j-1}$ (when $j<m$). The former means that we can apply a Nielsen move to reduce $m+n$ -- and so can be ruled out -- while the latter means that the change of sign equality in the second term is $\smash{\overleftarrow {z_{j-1}}h'_j = k'_jw'_j}$ and we can perform an elimination with $\overleftarrow {z_{j-1}}h_j = k_jz_j$, and we have this step immediately if $j=1$. This yields $z_j^{-1}k_j^{-1}k'_jw'_j = h_j^{-1}h'_j$

We claim that $w'_j$ is intermediate.  Since $\{\overleftarrow{z_{j-1}},w'_j\} = \{p_1,p_2\}$, and either $p_1$ or $p_2$ is intermediate, we have to rule out the possibility that $\smash{\overleftarrow{z_{j-1}}}$ is intermediate.  So suppose it is intermediate and, without loss of generality, suppose that $\overleftarrow{z_{j-1}}=p_1$. Then of course $p_2$ is not intermediate. Then $L(z_{j-1}) = d(a_{\lambda},c_{\nu})$ and we can apply Proposition 6.2 of \cite{C} to $\overleftarrow {z_{j-2}}h_{j-1} = k_{j-1}z_{j-1}$.  The possible outcomes are that the equality holds freely or that it is an instance of either $p_1p_0=qp_2$ or $\smash{p_2p_0^{-1}=q^{-1}p_1}$.  The latter equality would give $\smash{p_1 \equiv z_{j-1} \equiv \overrightarrow {p_1}}$, which is impossible, and the former equality would give $p_2 \equiv \overrightarrow {p_1}$, contradicting the fact that $p_2$ is not intermediate.  Thus we are left with the outcome that $h_{j-1}=k_{j-1} = 1$ and $\smash{\overleftarrow {z_{j-2}} \equiv  z_{j-1} \equiv p_1(\overrightarrow 2)}$.  Then, however, $L(z_{j-2}) = d(a_{\lambda},c_{\nu})$ and we can clearly iterate.  We finish up with $v_1 \equiv p_1(\oraj)$, contradicting the fact that $v_1$ is not intermediate, and so $w_j'$ is intermediate.

Given that $w'_j$ is intermediate, the equality $z_j^{-1}k_j^{-1}k'_jw'_j = h_j^{-1}h'_j$ either holds freely or is exceptional for $F(A^*,B^*) \cap F(A^*_+,B^*,C^*)$ with $\{z_j, w'_j\} = \{v_1,v_2\}$.  If the latter holds, then we can again decompose the uniform signature term of our counterpair, leading to a contradiction, so we can conclude that the equality holds freely and $h_j = h'_j, k_j = k'_j$ and $w'_j \equiv z_j$.  Our counterpair can then be broken down into the three equalities
\begin{gather*}v_1bh_1 \ldots bh_j = bk_1 \ldots bk_iz_j, \quad z_jbh_{j+1} \ldots bh_mh_0v_1^{-1} = bk_{j+1} \ldots bk_mk_0v_1^{-1}
\\z_ib^{-1}h'_{j+1}\ldots b^{-1}h'_nh_0v_1^{-1} = b^{-1}k'_{j+1}\ldots b^{-1}k'_nk_0.\tag*{\hbox{and}}\end{gather*}
If we ``splice'' the second and third equalities together, after inverting the former, then we obtain 
\begin{multline*}v_1h_0^{-1}{h'_n}^{\mskip-6mu -1}b{h'_{n-1}}^{\mskip-6mu -1} \ldots {h'_{j+1}}^{\mskip-6mu -1}bbh_{j+1} \ldots bh_mh_0v_1^{-1}\\= k_0^{-1}{k'_n}^{-1}b{k'_{n-1}}^{-1} \ldots {k'_{j+1}}^{-1}bbk_{j+1} \ldots bk_mk_0.\end{multline*}
If we combine this with $$v_1bh_1\ldots bh_mh_0v_1^{-1} = bk_1\ldots bk_mk_0,$$ then we have two equalities each with uniform signature pattern. So we can apply \fullref{Claim 4.1} to obtain the commuting relation 
\begin{multline*}h_0^{-1}{h'_n}^{\mskip-6mu -1}b{h'_{n-1}}^{\mskip-6mu -1} \ldots {h'_{j+1}}^{\mskip-6mu -1}bbh_{j+1} \ldots bh_mh_0bh_1\ldots bh_jbh_{j+1} \ldots bh_mh_0 \\ =
bh_mh_0bh_1\ldots bh_jbh_{j+1} \ldots bh_mh_0h_0^{-1}{h'_n}^{\mskip-6mu -1}b{h'_{n-1}}^{\mskip-6mu -1} \ldots {h'_{j+1}}^{\mskip-6mu -1}bbh_{j+1} \ldots bh_mh_0.\end{multline*} 
We can cancel $bh_{j+1} \ldots bh_mh_0$ to yield 
\begin{multline*}h_0^{-1}{h'_n}^{\mskip-6mu -1}b{h'_{n-1}}^{\mskip-6mu -1} \ldots {h'_{j+1}}^{\mskip-6mu -1}bbh_{j+1} \ldots bh_mh_0bh_1\ldots bh_j \\=  bh_mh_0bh_1\ldots bh_jbh_{j+1} \ldots bh_mh_0h_0^{-1}{h'_n}^{\mskip-6mu -1}b{h'_{n-1}}^{\mskip-6mu -1} \ldots {h'_{j+1}}^{\mskip-6mu -1}b.\end{multline*}
The above equalities hold in $F(A,B)$ which is, however, as a subgroup of $$G = \langle G^*, b \ | \  bF(A^*_-,B^*,C^*_-)b^{-1} = F(A^*_+,B^*,C^*_+) \rangle,$$ expressed as the HNN-extension $\langle A^*,B^*, b \ | \ bF(A^*_-,B^*)b^{-1} = F(A^*_+,B^*) \rangle$ in this context. Both expressions in the equalities are reduced and hence by the Normal Form Theorem applied to the last pair of occurrences of $b$ in the second of the two equalities, we deduce that $h_j \in F(A^*_-,B^*)$.  However that fact that the change of sign term of our counterpair is given in reduced form means that $h_j = h'_j \notin F(A^*_-,B^*)$.  This contradiction completes our analysis of the case when the equalities \eqref{eq2} and \eqref{eq3} are exceptional.

This leaves us with the remaining two possibilities for whichever pair of \eqref{eq1}, \eqref{eq2}, \eqref{eq3} are exceptional.  If \eqref{eq3} and \eqref{eq1} are exceptional so that $z_0=z'_n$, we can apply the above analysis to $ghg^{-1}=k$ and $g{h'}^{-1}g^{-1}={k'}^{-1}$, with $z'_n$ in the role of $z'_0$, to deduce the desired contradiction immediately. 

On the other hand, if \eqref{eq1} and \eqref{eq2} hold then we cannot deduce our conclusion by the same kind of appeal to symmetry since what we know this time from the analogue of the initial steps of our analysis above is that $z'_0=z'_n$ and this does not provide a connection between the two terms of our counterpair but rather a connection between the two ends of the term that contains a sign change.  The result is that when we carry out further stages of the analysis, what we obtain, after choosing our notation so that $z_0 = v_1$ and $z'_0=z'_n = v_2$, is the pair of equalities \begin{align*}v_1bh_1\ldots bh_mh_0v_1^{-1} &= bk_1\ldots bk_mk_0\\
v_2bh'_1b\ldots bh'_jb^{-1} \ldots h'_{n-1}b^{-1}v_2^{-1} &= bk'_1 \ldots bk'_jb^{-1} \ldots k'_{n-1}b^{-1}\tag*{\hbox{and}}\end{align*} (which strictly speaking do not form a counterpair since $v_1 \neq v_2$).

We can, however, dispose of the Cases A and B for $v_1$ and $v_2$, ie neither is or both are intermediate exactly as we did in the previous case. So again the difficult case is when just one is intermediate, and in fact the case when $v_2$ is intermediate is the problem (since if $v_1$ is intermediate we can ``attack'' the Normal Form sequence for  $v_1bh_1\ldots bh_mh_0v_1^{-1} = bk_1\ldots bk_mk_0$ with Proposition 6.2 of \cite{C} as we did in the case when \eqref{eq2} and \eqref{eq3} were exceptional).

The trick is to attack the Normal Form equalities for $$v_2bh'_1b\ldots bh'_jb^{-1} \ldots h'_{n-1}b^{-1}v_2^{-1} = bk'_1 \ldots bk'_jb^{-1} \ldots k'_{n-1}b^{-1}$$ from both ends simultaneously.  (This is the analogue of attacking the two Normal Form equalities from one end.) The first and last terms are $\overleftarrow {v_2}h'_1 =k'_1z'_1$ and $h'_{n-1}\overleftarrow {v_2}^{-1} = \smash{{{z'_{n-1}}}^{\mskip -6mu -1}k'_{n-1}}$ (adapting our notation suitably and temporarily assuming that $1 < j < n-1$).  We can eliminate $\overleftarrow {v_2}$ and the result is ${z'_{n-1}}^{-1}k'_{n-1}k'_0z'_0 = h'_{n-1}h'_1$.  If this is exceptional then we can decompose $$v_2bh'_1b\ldots bh'_jb^{-1} \ldots h'_{n-1}b^{-1}v_2^{-1} = bk'_1 \ldots bk'_jb^{-1} \ldots k'_{n-1}b^{-1}$$ and, in the usual manner, obtain a contradiction.  So the equality must hold freely and we obtain $h'_{n-1}= {h'_0}^{-1}$.  This argument will iterate and hence, taking inverses if necessary to ensure that $j \ge n-j$ we eventually reach a point where we can rewrite our equality as
$$v_2bh'_1b\ldots bh'_jb^{-1}{h'_{l-1}}^{\mskip-6mu -1} \ldots {h'_1}^{-1}b^{-1}v_2^{-1} = bk'_1 \ldots bk'_jb^{-1}{k'_{l-1}}^{\mskip-6mu -1} \ldots {k'_1}^{-1}b^{-1}$$
where $l \leq j$.  Since $v_2$ is intermediate we can apply Proposition 6.2 of \cite{C} to the string of equalities $\overleftarrow {v_2}h'_1 =k'_1z'_1, \olazp{1}h'_2 =k'_2z'_2, \ldots, \olazp{j-2}h'_{j-1} =k'_{j-1}z'_{j-1}$ and deduce that $v_2(\olaj) \equiv \olazp{j-1}$.  Moreover, since $l \leq j$ we also obtain the equality $v_2(\smash{\overleftarrow l}) \equiv \olazp{l-1}$.  

Still assuming that $1<j<n-1$, we deduce that $\{\olazp{j-1}, \olazp{l-1}\} = \{p_1,p_2\}$, but this is also true when $j=1$ with $\overleftarrow  {v_2}$ in place $\olazp{j-1}$, or $j=n-1$ with $\overleftarrow  {v_2}$ in place $\olazp{l-1}$. However, since $v_2$ is intermediate, it follows that $v_2$ is a proper subword of whichever of $p_1,p_2$ is not intermediate. Therefore, for instance if $u=v_{13}^{-1}p_0^{-1}v_{12}^{-1}qv_{11}^{-1}v_0v_2$, then $p_1\equiv v_{12}$ and $p_2 \equiv v_{11}^{-1}v_0v_2u^{-1}v_{13}^{-1}$ and this is impossible.  This completes the proof of \fullref{Claim 4.3}. \end{proof}

\begin{clm}\label{Claim 4.4} There do not exist counterpairs $ghg^{-1} =k, gh'g^{-1}=k'$ satisfying $l_b(g)=0$ and {\rm min}$\{l_b(h),l_b(h')\} >0$, such that $\rho_b(h)=1= \rho_b(h')$ and $h$ and $h'$ have the same exponent on the respective initial occurrences of $b$. \end{clm}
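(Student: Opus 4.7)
The plan is to run the same three-phase strategy employed in Claims 4.2 and 4.3: choose a counterpair minimizing $m+n = l_b(h)+l_b(h')$, discharge $g$ between the front and back pairs of endpoint Normal Form equalities, and split into cases according to whether the resulting exceptional basic relators are intermediate. Write $h = h_0bh_1\ldots bh_jb^{-1}h_{j+1}\ldots b^{-1}h_m$ and $h' = h'_0bh'_1\ldots bh'_{j'}b^{-1}\ldots b^{-1}h'_n$, and record the four endpoint equalities $gh_0 = k_0z_0$, $gh'_0 = k'_0z'_0$, $h_mg^{-1} = z_m^{-1}k_m$, $h'_ng^{-1} = z'_n{}^{-1}k'_n$, where $z_0, z'_0 \in U$ and $z_m, z'_n \in L$. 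Eliminating $g$ between the two front equalities gives a candidate equality in $F(A^*,B^*)\cap F(A^*_+,B^*,C^*)$; eliminating between the two back equalities gives one in $F(A^*,B^*,C^*_-)\cap F(B^*,C^*)$. Moreover, each counterpair's transition equality ${\overleftarrow {z_{j-1}}}h_j = k_jw_j$ is necessarily exceptional for the lower intersection, so Corollary 5.4 of \cite{C} supplies an associated basic relator $p_1p_0p_2^{-1}$ as in Claim 4.3.

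If both the front and back eliminations hold freely, then iterating the identification of $h_i$ with $h'_i$ (and of $k_i$ with $k'_i$, $z_i$ with $z'_i$) from the front, and dually from the back, either propagates all the way through and yields a counterpair with $h = h'$, or else a Nielsen move $h^{-1}h'$ produces a counterpair with strictly smaller $m+n$ that falls under \fullref{Claim 4.1}, \fullref{Claim 4.2} or \fullref{Claim 4.3}, contradicting minimality. So we may assume without loss of generality that the front elimination is exceptional, and after conjugation it takes the form $\{z_0, z'_0\} = \{v_1, v_2\}$ for a basic relator $u = v_1^{-1}v_0v_2$ of $F(A^*,B^*)\cap F(A^*_+,B^*,C^*)$. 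The possibility $v_1 = v_2$ is ruled out by exactly the \fullref{Claim 4.1} argument given in \fullref{Claim 4.3}, so $v_1$ and $v_2$ are genuinely distinct.

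From here the three subcases of \fullref{Claim 4.3} run essentially unchanged. In Case A, when neither $v_1$ nor $v_2$ is intermediate, \fullref{Proposition 5.2} applied to a uniform-signature segment of one of the conjugacy equalities forces $v_1$ to equal its own iterated downshift, which is impossible. In Case B, when both are intermediate, the exceptional relation at the sign change has $p_1 \equiv v_1$ and $p_2 \equiv v_2$, allowing each counterpair expression to decompose across its sign change into factors of strictly shorter $b$-length; \fullref{Claim 4.2} together with the minimality of $m+n$ then forces $h$ and $h'$ to commute.

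Case C, when exactly one of $v_1, v_2$ is intermediate, is the expected main obstacle. Here I would attack the relevant Normal Form chain from whichever end matches the intermediate relator, using Proposition 6.2 of \cite{C} and \fullref{Proposition 5.5} to control each step: a non-free step either forces an identification of $v_1$ with a shift of itself (contradictory) or produces a decomposition of the counterpair which, by the earlier claims together with the minimality of $m+n$, again yields commutation. The delicate point will be the bookkeeping: since both $h$ and $h'$ carry a sign change, any decomposition produces factors whose $\rho_b$ values and initial $b$-exponents must be verified to satisfy the hypotheses of \fullref{Claim 4.1}, \fullref{Claim 4.2} or \fullref{Claim 4.3}, and one must simultaneously handle the possibility that the back elimination is also exceptional (with its own $v'_1, v'_2$), checking that the identifications forced at the front and back ends are mutually consistent rather than combining into a contradictory downshift relation on $v_1$.
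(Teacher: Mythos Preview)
Your proposal has a structural gap that prevents it from going through. In Claim 4.4 both $h$ and $h'$ carry a single sign change, so there are \emph{four} endpoint Normal Form equalities involving $g$ (namely $gh_0=k_0z_0$, $gh'_0=k'_0z'_0$, $h_mg^{-1}=z_m^{-1}k_m$, $h'_ng^{-1}={z'_n}^{-1}k'_n$), and all four auxiliary elements $z_0,z'_0,z_m,z'_n$ lie in $U$ (since the final $b$ has exponent $-1$, $z_m$ is the upshift of $w_{m-1}\in L$). Eliminating $g$ therefore produces \emph{six} derived equalities, each a candidate exceptional equality for $F(A^*,B^*)\cap F(A^*_+,B^*,C^*)$; your back--back elimination does not land in $F(A^*,B^*,C^*_-)\cap F(B^*,C^*)$ as you state. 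You only consider the front--front and back--back eliminations and so miss the four cross terms (front of $h$ with back of $h$, front of $h$ with back of $h'$, etc.). The paper's proof isolates this as a separate ``tetrahedron'' Lemma: either all six hold freely, or there is a partition $\{i,j\}\cup\{i',j'\}$ of the four endpoints with exactly two free edges and four exceptional edges, the latter all coinciding up to inversion. This dichotomy is what drives the rest of the argument and is the genuinely new ingredient over Claim 4.3; without it you cannot, for instance, conclude that each of $z_0,z'_0,z_m,z'_n$ equals $v_1$ or $v_2$, which is needed in Cases B and C.

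Your Case A and Case C sketches also do not survive the passage from Claim 4.3. In Claim 4.3 one member of the counterpair has uniform signature, giving a clean target for Proposition 5.2 or Proposition 6.2 of \cite{C}; here neither member does. The paper's Case A instead uses Proposition 5.2 to force all intermediate Normal Form equalities to be free, reduces to the shape $gh_0b^ih_ib^{-(m-i)}h_mg^{-1}=\cdots$, and then invokes Claim 4.3 itself (applied to a derived element $h_0b^{i-j}{h'_0}^{-1}$) to prove $i=j$ and $m=n$, finally using the tetrahedron structure to force $h=h'$. Case C is still more delicate: it first shows $z_0\equiv p_1(\overrightarrow i)$ by iterating Proposition 6.2 backwards from the sign change, uses a syllable-length comparison $L(p_1)<L(v_\delta)$ to force the derived equality (1) to be free (hence $i=j$), then uses the tetrahedron Lemma to determine which of (2), (4), (6) are exceptional, and finishes with a further iteration from the back. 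None of this is ``essentially unchanged'' from Claim 4.3; the bookkeeping you flag as delicate is in fact where the substance lies.
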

\begin{proof} Suppose not; then, without loss of generality, we have a counterpair of the form
\begin{align*}gh_0bh_1 \ldots bh_ib^{-1}h_{i+1} \ldots b^{-1}h_mg^{-1} &= k_0bk_1 \ldots bk_ib^{-1}k_{i+1} \ldots b^{-1}k_m\\
gh_0bh'_1 \ldots bh'_jb^{-1}h'_{j+1} \ldots b^{-1}h'_ng^{-1} &= k'_0bk'_1 \ldots bk'_jb^{-1}k'_{j+1} \ldots b^{-1}k'_n.\tag*{\hbox{and}}\end{align*} and we can assume that we have chosen this counterpair with $m+n$ minimal among all possible candidates.

 We observe firstly that the changes of sign from positive to negative show that the intersection $F(A^*,B^*,C^*_-) \cap F(B^*,C^*)$ is exceptional.  By taking inverses if necessary, we can then assume that $\overleftarrow{z_{i-1}}h_iw_i^{-1} = \olazp{j-1} h'_j{w'_j}^{-1} = ((p_1p_0p_2^{-1})^{\pm 1}$ and, in particular, that $\smash{z_{i-1} = z'_{j-1}, w_i = w'_j}$ and $\smash{h_i = h'_j =p_0^{\pm 1}, k_i= k'_j = q^{\pm 1}}$.

 By eliminating $g$ variously from the equalities $gh_0 = k_0z_0$, $\smash{gh'_0 = k'_0z'_0}$, $h_mg^{-1} = \smash{z_m^{-1}}k_m$, $h'_ng^{-1} = {z'_n}^{-1}k'_n$, we obtain the following six equalities:
 
\begin{enumerate} \item [(1)] ${z'_0}^{-1}{k'_0}^{-1}k_0z_0 = {h'_0}^{-1}h_0$
 
\item [(2)] $z_m^{-1}k_mk_0z_0 = h_mh_0$

\item [(3)] ${z'_n}^{-1}k'_nk_0z_0 = h'_nh_0$

\item [(4)] $z_m^{-1}k_mk'_0z'_0 = h_mh'_0$

\item [(5)] ${z'_n}^{-1}k_mk'_0z'_0 = h_mh'_0$

\item [(6)] ${z'_n}^{-1}k'_nk_m^{-1}z_m = h'_nh_m^{-1}$
\end{enumerate}

In general each of these will either hold freely or be an exceptional equality for $F(A^*,B^*) \cap F(A^*_+,B^*,C^*)$.  We need to know exactly what the possibilities are. This is most easily done as a separate lemma within the current argument.

\begin{llem} Let $h_i, i=1,2,3,4$ be nontrivial elements of $F(A^*,B^*)$, $k_i, i=1,2,3,4$ nontrivial elements of $F(B^*,C^*)$ and $z_i, i=1,2,3,4$ nontrivial elements of $U$ such that the six equalities $z_i^{-1}k_i^{-1}k_jz_j, 1 \leq i,j, \leq 4, i \neq j$ hold.  Then 
\begin{enumerate} \item [\rm(i)] either there exists $i$ such that all the equalities involving $z_i$ hold freely in which case all six equalities hold freely and hence coincide;
\item [\rm(ii)] or there exists a partition of $\{1,2,3,4\}$ into subsets $\{i,j\}$ and $\{i',j'\}$ such that $z_i^{-1}k_i^{-1}k_jz_j = h_i^{-1}h_jz$ and $z_{i'}^{-1}k_{i'}^{-1}k'_jz'_j= h_{i'}^{-1}h'_j$ hold freely and the remaining equalities are all exceptional equalities for $F(A^*,B^*) \cap F(A^*_+,B^*,C^*)$ and therefore coincide up to possible inversion.
\end{enumerate}
\end{llem}
\begin{proof}  It is convenient to visualise the equalities as the edges of a tetrahedron whose vertices are the elements $z_i, i=1,2,3,4$.  It is easy to see that if the equalities on two edges of a face are free, the so is the equality on the third edge.  It follows that if there exists $i$ such that all three edges incident to the vertex $z_i$ represent free equalities, then all six equalities hold freely and therefore (i) holds.

Suppose then that every vertex $z_i$ is incident to at most one edge that is free, ie represents a free equality. We need to show that then (ii) holds.  For this we need the following observation.

\begin{slem}  If two of the equalities involving the element $z_i$ are exceptional, then the equality obtained by eliminating $z_i$ from these holds freely. 
\end{slem} 
\begin{proof} Suppose, without loss of generality, that the equalities $\smash{z_1^{-1}k_1^{-1}k_2z_2 = h_1^{-1}h_2}$ and $\smash{z_1^{-1}k_1^{-1}k_3z_3 = h_1^{-1}h_3}$ are both exceptional.  Then both are an instance of the equality $\smash{v_1^{-1}v_0v_2}=u$ (or its inverse) and we have, say, $z_1=v_1, z_2=z_3=v_2$ and $k_1^{-1}k_2 = v_0 = k_1^{-1}k_3$ so that $k_2=k_3$ and similarly $h_2=h_3$.  Then clearly $z_2^{-1}k_2^{-1}k_3z_3 = h_2^{-1}h_3$ holds freely. \end{proof}

Suppose then that, say, the edges $z_1z_2$ and $z_1z_3$ are exceptional, ie represent exceptional equalities. By the Sublemma, the third edge $z_2z_3$ of the face $z_1z_2z_3$ is free.  Since at most one edge incident to $z_1$ can be free,  it follows that $z_1z_4$ is exceptional and similarly $z_3z_4$ is exceptional.  By the Sublemma, $z_2z_4$ is free and we have the partition consisting of $\{1,3\}$ and $\{(2,4\}$ as required. Finally we note that if a face has two edges that are exceptional, then using the free inequality on the third edge transforms the exceptional equality on one edge 
into the exceptional equality on the other. \end{proof}

We return to the argument of \fullref{Claim 4.4}.  
If all the equalities obtained by substituting for $g$ hold freely, then by conjugating by $gh_0 =kz_0$, we can obtain a conjugate counterpair but at the same time reduce both $m$ and $n $ by $2$.  This will contradict the minimality of our choice of counterpair, although care must be taken in ``degenerate'' cases when one of our conditions {\rm min}$\{l_b(h),l_b(h')\} >0$ or $\rho_b(h)=1= \rho_b(h')$ fails to hold for the new counterpair.  However these ``degenerate'' cases can all be dealt with by appealing to our earlier results Claims \ref{Claim 4.1}--\ref{Claim 4.3}. Therefore we only have to deal with the case when we have four exceptional and two free equalities. We encounter the same three cases as in \fullref{Claim 4.3}, depending on the nature of $v_1$ and $v_2$ in the exceptional equality $u=v_1^{-1}v_0v_2$ for $F(A^*,B^*) \cap F(A^*_+,B^*,C^*)$.

{\bf Case A}\qua Suppose neither $v_1$ nor $v_2$ is intermediate.  

In this situation, it follows from \fullref{Proposition 5.2} that all of the Normal Form equalities other than the first, last and ``change of sign'' term of each sequence will hold freely and thus our counterpair takes the form 
\begin{align*}gh_0b^ih_ib^{-(m-i)}h_mg^{-1} &= k_0b^ik_ib^{-(m-i)}k_m \\ gh'_0b^jh'_jb^{-(n-j)}h'_ng^{-1} &= k'_0b^jk'_jb^{-(n-j)}k'_n.\tag*{\hbox{and}}\end{align*} In particular we have $\smash{gh_0b^ih_i = k_0b^ik_ip_{\delta}}$ and $\smash{gh'_0b^jh'_j = k'_0b^jk'_jp_{\delta}}$, where $\delta = 1$ if $h_i = h'_j =p_0$ and $\delta = 2$ if $\smash{h_i = h'_j =p_0^{-1}}$.  From this we obtain $\smash{gh_0b^{i-j}{h'_0}^{-1}g^{-1}} = \smash{k_0b^{i-j}{k'_0}^{-1}}$. We claim that in fact $i=j$.  If not, then $\smash{h_0b^{i-j}{h'_0}^{-1}}$ and $\smash{k_0b^{i-j}{k'_0}^{-1}}$ have nonzero $b$--length and no sign changes.  However it follows from \fullref{Claim 4.3} that $gh_0b^{i-j}{h'_0}^{-1}g^{-1} = k_0b^{i-j}{k'_0}^{-1}$ does not form a counterpair with either of the terms of our original counterpair.  In particular this means that the nontrivial element $h_0b^{i-j}{h'_0}^{-1}$ commutes with both $h$ and $h'$.  However these commuting relations hold in the free group $F(A,B)$ and hence $h$ and $h'$ commute which of course is a contradiction.  It follows, therefore that $i=j$. A similar argument derived from $p_{\delta}b^{-(m-i)}h_m= b^{-(m-i)}k_m$ and $p_{\delta}b^{-(n-j)}h'_m= b^{-(n-j)}k'_m$ shows that $m-i =n-j$ and hence, since $i=j$, we obtain $m=n$.

However since $i=j$ and therefore $z_{i-1} = z'_{i-1}$, the Normal Form equalities $$gh_0=k_0z_0, \overleftarrow {z_1} =z_2, \ldots, \overleftarrow {z_{i-2}} =z_{i-1}\quad\text{and}\quad gh'_0=k'_0z'_0, \olazp{1} =z'_2, \ldots \olazp{i-2} =z'_{i-1}$$ yield $z_{i-2} = z'_{i-2}, \ldots, z_0 = z'_0$. Similarly $w_i = w'_i$ yields $w_{i+1} = w'_{i+1}, \ldots, w_{m-1} = w'_{m-1}$. Therefore $\smash{z_m^{-1}} = \overrightarrow {w_{m-1}} = \smash{\overrightarrow {w\smash{\rlap{$'$}}_{m-1}\strut}} = \smash{{z'_m}^{-1}}$ so that $\smash{z_m = z'_m}$.  This means that both the equalities $\smash{{z'_0}^{-1}{k'_0}^{-1}k_0z_0 = {h'_0}^{-1}h_0}$ and $\smash{{z'_m}^{-1}{k'_m}k_mz_m = {h'_m}^{-1}h_m}$ derived by elimination of $g$ from the first and last terms of the two Normal Form equalities must hold freely -- for otherwise we would have $v_1=v_2$ which, by the single syllable criterion of Proposition 5.5 of \cite{C} would mean that $v_1=v_2$ would be intermediate.  It follows therefore that $h_0=h'_0, k_0=k'_0, h_m = h'_m, k_0=k'_m$ and hence that $h=h', k=k'$ which is obviously a contradiction. This concludes Case A.

{\bf Case B}\qua Suppose both $v_1$ and $v_2$ are intermediate.
  
Our conventions on the choice of notation described after Proposition 5.5 of \cite{C} imply that $v_1 = p_1$, $v_2=p_2$.   Since we know that precisely four of the inequalities obtained by eliminating $g$ are exceptional, it follows that all of the four ``auxiliary terms'' $z_0, z'_0, z_m, z'_n$ are either $v_1$ or $v_2$.  Suppose, for instance that $z_0 \equiv v_1$.  Then we obtain a conjugate counterpair of the form
\begin{align*}z_0bh_1 \ldots bh_ib^{-1}\ldots b^{-1}h_mh_0z_0^{-1} &= bk_1 \ldots bk_ib^{-1}\ldots b^{-1}k_mk_0\\
\qquad z_0h_0^{-1}h'_0bh'_1 \ldots bh'_jb^{-1}\ldots b^{-1}h'_nh_0z_0^{-1} &= k_0^{-1}k'_0bk'_1 \ldots bk'_jb^{-1}\ldots b^{-1}k'_nk_0.\tag*{\hbox{\rlap{and}}}\end{align*}
The ``change of sign'' equalities are, as in Case A, $\smash{\overleftarrow{z_{i-1}}h_iw_i^{-1}} = \olazp{j-1} \smash{h'_j{w'_j}^{-1}} = \smash{(p_1p_0p_2^{-1})^{\pm 1}}$ and it follows that $\{\overleftarrow{z_{i-1}},w_i\} = \{\olazp{j-1},w'_j\} = \{p_1,p_2\} = \{v_1, v_2\}$. Possibly by inverting one or both of the elements of this counterpair, we can assume that $w_i=w'_j = p_1=v_1$. Then each of the displayed equalities in the above counterpair decomposes into a product of equalities with uniform signature patterns and the desired contradiction will follow from \fullref{Claim 4.2}.

{\bf Case C}\qua Suppose that one of $v_1$ and $v_2$ is intermediate and the other is not.

As in Case B, we know that from the analysis of the equalities obtained by eliminating $g$ from the first and last terms of the Normal Form inequalities, that each of $z_0, z'_0, z_m, z'_n$ is either $v_1$ or $v_2$. Again the ``change of sign'' inequalities yield $\{\overleftarrow{z_{i-1}},w_i\} = \{\olazp{j-1},w'_j\} = \{p_1,p_2\}$. We note that one of $\{p_1,p_2\}$ is intermediate and the other is not.

 Let us assume that $\overleftarrow{z_{i-1}} \equiv \olazp{j-1} \equiv p_1$ is intermediate. We shall see that there is no loss of generality in so doing.  We examine the two sequences of Normal Form equalities as far as the change of sign equalities.  Since $\overleftarrow{z_{i-1}} \equiv p_1$, and the latter is intermediate, it follows that $L(z_{i-1}) = L(p_1)=d(a_{\lambda},c_{\nu})$ and hence, by Proposition 6.2 of \cite{C}, the equality $\overleftarrow{z_{i-2}}h_{i-1} = k_{i-1}z_{i-1}$ either holds freely or is an instance of $p_2p_0^{-1}=q^{-1}p_1$.  The latter is impossible since then $z_{i-1} \equiv p_1 \equiv \overleftarrow{z_{i-1}}$.  Thus the equality holds freely and $z_{i-2} \equiv p_1(\overrightarrow 2)$.  This argument can clearly now be iterated to obtain $z_0 \equiv p_1(\orai)$ and $h_1 = \ldots = h_{i-1} = 1 = k_1 \ldots = k_{i-1}$. Applying this whole argument to the second term of our counterpair yields $z'_0 \equiv p_1(\oraj)$ and $h'_1 = \ldots = h'_{j-1} = 1 = k'_1 \ldots = k_{'-1}$.  

 We consider the equality $z_0^{-1}k_0^{-1}k'_0z'_0=h_0^{-1}h'_0$ obtained by the elimination of $g$.  If this is not free then $\{z_0, z'_0\} = \{v_1,v_2\}$. Since $z_0 \equiv p_1(\orai)$ and $z'_0 \equiv p_1(\oraj)$, we have $L(z_0)=L(p_1)=L(\smash{z'_0})$. However it follows from the relationship between $u^{-1}v_1^{-1}v_0v_2$ and $p_0^{-1}p_1^{-1}qp_2$ determined by the single syllable criterion of Proposition 5.5 of \cite{C} that $L(p_1) < L(v_1)$ or $L(p_1) < L(v_2)$ according as $v_1$ or $v_2$ is not intermediate.  Therefore $z_0^{-1}k_0^{-1}k'_0z'_0=h_0^{-1}h'_0$ can only hold freely so that $z_0=z'_0$, giving $i=j$, and also $h_0=h'_0, k_0=k'_0$.

 Our equalities therefore simplify to
\begin{align*}gh_0b^ip_0b^{-1}h_{i+1} \ldots b^{-1}h_mg^{-1} &= k_0b^iqb^{-1}k_{i+1} \ldots b^{-1}k_m\\
gh_0b^ip_0b^{-1}h'_{i+1} \ldots b^{-1}h'_ng^{-1} &= k_0b^iqb^{-1}k'_{i+1} \ldots b^{-1}k'_n.\tag*{\hbox{and}}\end{align*}
and we have just three distinct Normal Form equalities that involve $g$, namely $gh_0=k_0z_0, h_mg^{-1}=z_m^{-1}k_m$ and $h'_ng^{-1}={z'_n}^{-1}k'_n$.  These give rise to three derived equalities by elimination of $g$, namely

\begin{enumerate} \item [(2)] $z_m^{-1}k_mk_0z_0 = h_mh_0$

 \item [(4)] ${z'_n}^{-1}k'_nk_0z_0 = h'_nh_0$

 \item [(6)] ${z'_n}^{-1}k'_nk_m^{-1}z_m = h'_nh_m^{-1}$, 
\end{enumerate}
 using our earlier numbering.

 If we conjugate by $gh_0b^i = k_0b^ip_1$,and use the fact that $\overleftarrow{z_{i-1}}h_i = k_iw_i$ is just $p_1p_0=qp_2$, we obtain a conjugate counterpair
\begin{align*}p_2b^{-1}h_{i+1} \ldots b^{-1}h_mh_0b^ip_0p_2^{-1} = b^{-1}k_{i+1} \ldots b^{-1}k_mk_0b^iq\\
p_2b^{-1}h'_{i+1} \ldots b^{-1}h'_nh_0b^ip_0p_2^{-1} = b^{-1}k'_{i+1} \ldots b^{-1}k'_nk_0b^iq.\tag*{\hbox{and}}\end{align*}
 Now if either $h_mh_0 = 1 = k_mk_0$ or $h'_nh_0=1=k'_nk_0$, then we will contradict the minimality of our initial choice of counterpair. The fact that the initial exponent is now $-1$ rather than $+1$ is not an issue since our choice of $+1$ was without loss of generality and made only for notational simplicity.  However a caveat concerning the need to apply Claims \ref{Claim 4.1}--\ref{Claim 4.3} to dispose of ``degenerate'' cases does apply here as well.  This means neither (2) nor (4) can hold freely and therefore (6) will hold freely yielding $z_m=z'_n$ and $h_m=h'_m, k_m=k'_n$.

 We can now simplify our original counterpair a little further to give 
\begin{align*}gh_0b^ip_0b^{-1}h_{i+1} \ldots b^{-1}h_mg^{-1} &= k_0b^iqb^{-1}k_{i+1} \ldots b^{-1}k_m\\
gh_0b^ip_0b^{-1}h'_{i+1} \ldots h'_{n-1}b^{-1}h_mg^{-1} &= k_0b^iqb^{-1}k'_{i+1} \ldots k'_{n-1}b^{-1}k_m.\tag*{\hbox{and}}\end{align*}
and since we know that $z_m=z'_n$ we can attack the terms of our counterpair from the back via the Normal Form equalities.  Specifically we obtain $\smash{h_{m-1}}\olazmi=\smash{\smash{z_{m-1}^{-1}}k_{m-1}}$ and $h'_{m-1}\olazmi={z'_{m-1}}^{-1}k'_{m-1}$, and hence $h'_{n-1}h_{m-1}^{-1} = {z'_{n-1}}^{-1}k'_{n-1}k_{m-1}^{-1}z_{m-1}$. 

Suppose this is exceptional for $F(A^*,B^*) \cap F(A^*_+,B^*,C^*)$. Then $\{z'_{n-1}, z_{m-1}\} = \{v_1,v_2\} = \{z_0,z_m\}$. We write $\{v_1,v_2\} = \{v_{\gamma},v_{\delta}\}$ where $v_{\gamma}$ is intermediate and $v_{\delta}$ is not.  Now it follows from the relationship between $u^{-1}v_1^{-1}v_0v_2$ and $p_0^{-1}p_1^{-1}qp_2$ defined by Proposition 5.5 of \cite{C} that $L(v_{\gamma})+L(v_{\delta}) = L(v_1)+L(v_2)= L(p_1)+L(p_2)$ and also that $L(p_1) < L(v_{\delta})$ and $L(v_{\gamma}) < L(p_2)$.  Since $z_0 = p_1(\orai)$ we have $L(z_0)=L(p_1)< L(v_{\delta})$ and therefore $z_0= v_{\gamma}, z_m=v_{\delta}$.  Also $L(z_0)= L(v_{\gamma})$ and so $L(p_1)= L(v_{\gamma})$ whence $L(p_2)= L(v_{\delta})$.

Now we also have $\{z'_{n-1}, z_{m-1}\} = \{v_1,v_2\}= \{v_{\gamma},v_{\delta}\}$.  So suppose that $z_{m-1}=v_{\gamma}$ and is therefore intermediate. If we rewrite $h_{m-1}\olazmi=\smash{z_{m-1}^{-1}}k_{m-1}$ as $z_{m-1}h_{m-1}\olazmi=k_{m-1}$, then either the latter is exceptional for $F(A^*,B^*,C^*_-) \cap F(B^*,C^*)$ or $h_{m-1}=1 = k_{m-1}$ and $z_{m-1} = \olazmi$.  However if the latter holds, then $L(z_{m-1})=L(v_{\gamma}) < L(p_2) = L(v_{\delta}) = L(z_m)$ which contradicts $z_{m-1} = \olazmi$. So only the former can hold, but then since $L(z_{m-1})=L(v_{\gamma}) < L(p_2)$ we must have $z_{m-1}=p_1, \overleftarrow{z_m}=p_2$ and $h_{m-1}=p_0, k_{m-1}=q$.  However we also have $z_{m-1}=v_{\gamma} = z_0$ which contradicts $z_0=p_1(\orai)$.  The only possibility left is that $h'_{n-1}=h_{m-1}, k'_{n-1}=k_{m-1}$ and $z'_{n-1}=z_{m-1}$.  We obtain the same conclusion if $z'_{n-1}=v_{\gamma}$.

 As usual, the argument can be iterated and, if $m=n$, we get all the way to $h'_{i+1}=h_{i+1},  k'_{i+1}=k_{i+1}$ and $z'_{i+1}=z_{i+1}$ giving $h=h'$ which is clearly contradictory.  The problem remaining is when $m \neq n$ and we can assume that $m<n$. Then $$gh_0b^ip_0b^{-1}h'_{i+1} \ldots b^{-1}h_mg^{-1} = k_0b^iqb^{-1}k'_{i+1} \ldots b^{-1}k_m$$ becomes 
\begin{multline*}gh_0b^ip_0b^{-1}h'_{i+1}\ldots b^{-1}h'_{n-m+i}b^{-1}h_{i+1} \ldots b^{-1}h_{m-1}b^{-1}h_mg^{-1} 
\\= k_0b^iqb^{-1}k'_{i+1}\ldots b^{-1}k'_{n-m+i}b^{-1}k_{i+1} \ldots b^{-1}k_{m-1}b^{-1}k_m.\end{multline*} If we now conjugate both terms of the counterpair by $gh_0b^ip_0=k_0b^iqp_2$ we obtain 
\begin{align*}
p_2\hat hp_2^{-1} = p_2b^{-1}h_{i+1} \ldots b^{-1}h_mh_0b^ip_0p_2^{-1}&= b^{-1}k_{i+1} \ldots b^{-1}k_mk_0b^iq = \hat k\\
p_2\hat h'p_2^{-1} = p_2b^{-1}h'_{i+1} \ldots b^{-1}h'_{n-m+i}\hat hp_2^{-1} &= b^{-1}k'_{i+1} \ldots \ldots b^{-1}h'_{n-m+i} \hat k\tag*{\hbox{and}}\\ 
p_2b^{-1}h'_{i+1} \ldots b^{-1}h'_{n-m+i}p_2^{-1} &= b^{-1}k'_{i+1} \ldots \ldots b^{-1}k'_{n-m+i}.\tag*{\hbox{and hence}}\end{align*}
The desired contradiction now follows in the usual way from \fullref{Claim 4.2}. \end{proof}

\begin{clm}\label{Claim 4.5} There do not exist counterpairs $ghg^{-1} =k, gh'g^{-1}=k'$ satisfying $l_b(g)=0$ and {\rm min}$\{l_b(h),l_b(h')\} >0$ such that $\rho_b(h)=1= \rho_b(h')$. \end{clm}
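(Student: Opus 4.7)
The only case not handled by \fullref{Claim 4.4} is a counterpair in which the initial $b$-exponents of $h$ and $h'$ differ. Without loss of generality, I would assume $h$ has sign pattern $(+,-)$ and $h'$ has sign pattern $(-,+)$, writing $h = h_0bh_1 \cdots bh_ib^{-1}h_{i+1}\cdots b^{-1}h_m$ and $h' = h'_0b^{-1}h'_1 \cdots b^{-1}h'_jbh'_{j+1}\cdots bh'_n$, with corresponding expressions for $k, k'$, and I would choose a counterpair with $m+n$ minimal. Here the $+\!\to\!-$ sign change in $h$ forces $F(A^*,B^*,C^*_-)\cap F(B^*,C^*)$ to be exceptional, while the $-\!\to\!+$ sign change in $h'$ symmetrically forces the mirror intersection $F(A^*_+,B^*,C^*)\cap F(B^*,C^*)$ to be exceptional; the mirror versions of the relevant results of \S5 of \cite{C} apply via the $b \leftrightarrow b^{-1}$ symmetry of the HNN presentation.

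The plan then mirrors the proof of \fullref{Claim 4.4}. I would eliminate $g$ from the four end Normal Form equalities $gh_0 = k_0z_0$, $gh'_0 = k'_0z'_0$, $h_mg^{-1}=z_m^{-1}k_m$, $h'_ng^{-1}={z'_n}^{-1}k'_n$, yielding six derived equalities in $G^*$, each either holding freely or being exceptional for $F(A^*,B^*)\cap F(A^*_+,B^*,C^*)$. The tetrahedral Lemma inside the proof of \fullref{Claim 4.4} applies without change and gives the familiar dichotomy: either all six hold freely, in which case conjugating by $gh_0 = k_0z_0$ strictly reduces $m+n$ and contradicts minimality (with degenerate collapses disposed of by Claims \ref{Claim 4.1}--\ref{Claim 4.4}), or exactly four are exceptional and two are free, forcing the four end elements $z_0, z'_0, z_m, z'_n$ to lie in $\{v_1, v_2\}$ for the exceptional relation $u=v_1^{-1}v_0v_2$. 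One then splits on whether $v_1, v_2$ are intermediate. If neither is intermediate, \fullref{Proposition 5.2} forces $v_1$ to coincide with a multiple downshift of itself, which is impossible. If both are intermediate, the change-of-sign equality for each term decomposes at the sign change, and \fullref{Claim 4.2} applied to the two uniform-signature halves on each side forces $h$ and $h'$ to commute.

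The main obstacle is the mixed case in which exactly one of $v_1, v_2$ is intermediate. Here I would deploy the ``attack from both ends'' technique developed in the proof of \fullref{Claim 4.3}: peel matched syllables inward from both extremes of the $(-,+)$ term of $h'$ (and in parallel from the $(+,-)$ term of $h$), at each step either producing an exceptional equality that decomposes one of the counterpair terms (which contradicts either the minimality of $m+n$ or an earlier Claim) or a free equality that shifts the analysis one syllable inward. Eventually this iteration either exhausts both sequences, forcing $h = {h'}^{\pm 1}$ or $h$ and $h'$ to commute, or yields a length equality among $z_0, z_m, z'_0, z'_n$ that is incompatible with the strict inequalities $L(p_1) < L(v_\delta)$ and $L(v_\gamma) < L(p_2)$ from the single-syllable criterion of Proposition 5.5 of \cite{C}. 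The additional bookkeeping beyond \fullref{Claim 4.4} lies in the asymmetry between the $(+,-)$ term of $h$ and the $(-,+)$ term of $h'$: the roles of the upper and lower edge groups swap between the two sides of the counterpair, so each length-comparison inequality must be applied in its appropriately mirrored form. Once this is tracked consistently, the iterative argument delivers the required contradiction.
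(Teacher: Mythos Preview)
Your proposal has a genuine gap at the very first step after setting up the four end Normal Form equalities. In Claim~4.4 the tetrahedral Lemma works because all four end elements $z_0, z'_0, z_m, z'_n$ lie in the \emph{same} edge group $U$: both $h$ and $h'$ begin with $b^{+1}$ (so $z_0, z'_0 \in U$) and both end with $b^{-1}$ (so the shifted elements $z_m, z'_n$ also lie in $U$). In your setup, with $h$ of pattern $(+,-)$ and $h'$ of pattern $(-,+)$, one finds $z_0, z_m \in U$ but $z'_0, z'_n \in L$. Consequently four of the six derived equalities --- namely those mixing a $z$ from $U$ with a $z'$ from $L$ --- are \emph{not} of the form ``either free or exceptional for $F(A^*,B^*)\cap F(A^*_+,B^*,C^*)$''; the right-hand side $z^{-1}k\,k'\,z'$ does not lie in $F(A^*_+,B^*,C^*)$ at all. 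The tetrahedral Lemma therefore does not apply, and the entire case split on the intermediacy of $v_1, v_2$ collapses before it begins. The ``mirror'' remark does not rescue this: mirroring trades $U$ for $L$ globally, but cannot make a mixed $U$--$L$ pair live in a single edge group.

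The paper's proof bypasses this difficulty altogether with a short conjugation trick rather than a fresh case analysis. One uses the change-of-sign equality for $h$ to write $gh_0bh_1\ldots bh_i = k_0bk_1\ldots bk_i\,w_i$ and then conjugates the whole counterpair by this element. The conjugate $\hat h$ is a cyclic rearrangement of $h$ with sign pattern $(-,+)$ (or possibly collapsing to $\rho_b(\hat h)\le 1$ or even $l_b(\hat h)=0$), while $\hat h'$ acquires extra $b^{-1}$'s on the left and $b$'s on the right and is in reduced form with $\rho_b(\hat h')=1$ and initial exponent $-1$. Both terms of the new counterpair now share the same initial $b$-exponent, so one is back in the situation of Claim~4.4 (or, in the degenerate collapses, Claims~4.1--4.2). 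No new analysis of derived equalities is needed.
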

\begin{proof} If a counterpair exists, then it fails to satisfy the hypotheses of \fullref{Claim 4.4}.  It must therefore have, without loss of generality, the form \begin{align*}\quad gh_0bh_1 \!\ldots bh_ib^{-1} \!\!\ldots b^{-1}h_{m-1}b^{-1}h_mg^{-1} &\!=\! k_0bk_1\!\ldots bk_ib^{-1}\!\!\ldots b^{-1}k_{m-1}b^{-1}k_m\\
gh'_0b^{-1}h'_1\ldots b^{-1}h'_jb\ldots bh'_{n-1}bh'_ng^{-1} &\!=\! k'_0b^{-1}k'_1\ldots b^{-1}k'_jb\ldots bk'_{n-1}bk'_n.\tag*{\hbox{\rlap{and}}}\end{align*}  The resulting Normal Form equalities from the first member of the counterpair show that the intersection $F(A^*,B^*,C^*_-) \cap F(B^*,C^*)$ is exceptional and
$gh_0bh_1 \ldots bh_i=k_0bk_1 \ldots bk_iw_i$, with $\{\overleftarrow{z_{i-1}},w_i\} = \{p_1,p_2\}$. If we substitute for $g$, then we obtain a counterpair
\begin{align*}
w_i\hat hw_i^{-1} &= w_ib^{-1}h_{i+1} \ldots h_{m-1}b^{-1}h_mh_0bh_1 \ldots bh_iw_i^{-1}\\
&= b^{-1}k_{i+1} \ldots k_{m-1}b^{-1}k_mk_0bk_1 \ldots bk_i =\hat k, \\
w_i\hat h'w_i^{-1}&= w_ih_i^{-1}b^{-1} \ldots b^{-1}h_0^{-1}h'_0b^{-1}h'_1b^{-1}\ldots b^{-1}h'_jb\ldots bh'_nh_0bh_1 \ldots bh_iw_i^{-1} \\
&= k_i^{-1}b^{-1} \ldots b^{-1}k_0^{-1}k'_0b^{-1}k'_1b^{-1}\ldots b^{-1}k'_jb\ldots bk'_nk_0bk_1 \ldots bk_i = \hat k'\end{align*} since we have just conjugated the original counterpair.  

 We cannot exclude the possibility that $\smash{w_i\hat hw_i^{-1}}$ is not in reduced form and it is possible that $\smash{\rho_b(\hat h)} = 0$, and even that $l_b(h) =0$.  However $\smash{w_i\hat h'w_i^{-1}}$ is in reduced form and so we have $\rho_b(\smash{\hat h}) \leq 1, \rho_b(\smash{\hat h'}) = 1$ and we reduce to one of \fullref{Claim 4.1}, \fullref{Claim 4.2} or Claim 4.4 as appropriate. \end{proof}

Finally we are ready to verify the our overall conclusion that there are no counterpairs, having verified this assertion for three initial cases.
\begin{clm}\label{Claim 4.6} There do not exist counterpairs $ghg^{-1} =k, gh'g^{-1}=k'$ satisfying $l_b(g)=0$ and {\rm min}$\{l_b(h),l_b(h')\} >0$. \end{clm}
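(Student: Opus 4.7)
The plan is to complete the induction on $\rho_b(h) + \rho_b(h')$ begun in the preceding claims. The base cases $\rho_b(h) + \rho_b(h') \leq 2$ are precisely the three already verified: \fullref{Claim 4.2} for sum $0$, \fullref{Claim 4.3} for sum $1$, and \fullref{Claim 4.5} for sum $2$ (the hypothesis $\min\{l_b(h), l_b(h')\} > 0$ forcing each of $\rho_b(h), \rho_b(h')$ to be at least $1$). Together with \fullref{Claim 4.1} for the $\min = 0$ situation, these base cases and the inductive step below will give the full statement.

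For the inductive step, I would suppose that a counterpair exists with $\rho_b(h) + \rho_b(h') \geq 3$ and choose one minimising this sum, with $l_b(h) + l_b(h')$ as a secondary measure. Without loss of generality $\rho_b(h) \geq 2$, and after possibly inverting $h$ we may assume the first sign change in $h$ runs from $+1$ to $-1$ at position $i$. As in the proof of \fullref{Claim 4.5}, the Normal Form equality $\overleftarrow{z_{i-1}} h_i = k_i w_i$ at this sign change is exceptional for $F(A^*,B^*,C^*_-) \cap F(B^*,C^*)$, yielding $\{\overleftarrow{z_{i-1}}, w_i\} = \{p_1, p_2\}$. The partial identity $gh_0 b h_1 \cdots b h_i = k_0 b k_1 \cdots b k_i w_i$ lets me solve for $g$ and substitute into both members of the counterpair, producing a conjugate counterpair $w_i \hat h w_i^{-1} = \hat k$, $w_i \hat h' w_i^{-1} = \hat k'$, in which $\hat h = P^{-1} h P$ is a cyclic rotation of $h$ past its first sign change and $\hat h' = P^{-1} h' P$, with $P = h_0 b h_1 \cdots b h_i$.

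A sign-pattern bookkeeping shows that $\rho_b(\hat h)$ strictly decreases provided the initial and terminal $b$-exponents of $h$ agree, while $\rho_b(\hat h')$ grows by at most $2$, the excess being controlled by whether the initial and terminal $b$-exponents of $h'$ match those of $P$. Generically this gives $\rho_b(\hat h) + \rho_b(\hat h') < \rho_b(h) + \rho_b(h')$, whereupon the inductive hypothesis applied to $(\hat h, \hat h')$ supplies the required contradiction (or forces $g \in F(B,C)F(A,B)$ after undoing the substitution, which is equally contradictory).

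The main obstacle is the boundary configuration where the inductive measure fails to strictly decrease, forcing a rigid alignment of the sign patterns of $h$ and $h'$. In this situation one repeats the three-case analysis of the elements $v_1$, $v_2$ in the exceptional description of $F(A^*,B^*) \cap F(A^*_+,B^*,C^*)$ that was carried out in \fullref{Claim 4.3} and \fullref{Claim 4.4}. Cases A (neither intermediate) and C (exactly one intermediate) are eliminated by \fullref{Proposition 5.2} and Proposition 6.2 of \cite{C}, exactly as in the earlier claims, producing impossible identities such as $v_1 \equiv v_1(\overleftarrow m)$. Case B (both $p_1$ and $p_2$ intermediate) forces $ghg^{-1} = k$ to decompose at the chosen sign change into two factors of strictly smaller $b$-length; by the minimality of the counterpair, each factor must commute with $h'$, so $h$ commutes with $h'$, giving the final contradiction.
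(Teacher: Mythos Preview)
Your proposal has two genuine gaps.

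First, the base cases are miscounted. The hypothesis $\min\{l_b(h),l_b(h')\}>0$ does \emph{not} force $\rho_b(h),\rho_b(h')\geq 1$: a word can have positive $b$--length and uniform sign, i.e.\ $\rho_b=0$. Thus $\rho_b(h)+\rho_b(h')=2$ can occur as $(0,2)$ or $(2,0)$, and neither \fullref{Claim 4.4} nor \fullref{Claim 4.5} touches those configurations. The paper explicitly lists the verified base cases as sum $0$, sum $1$, and sum $2$ \emph{with} $\rho_b(h)=\rho_b(h')=1$; the $(0,2)$ configuration is handled only inside Case~4.6.1a of the proof of \fullref{Claim 4.6} itself.

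Second, your inductive step does not actually decrease the measure. Conjugating by the prefix $P=h_0bh_1\cdots bh_i$ of $h$ (up to its first sign change) gives $\hat h'=P^{-1}h'P$, whose $b$--signature is the block $b^{-1}\cdots b^{-1}$ from $P^{-1}$, then the signature of $h'$, then $b\cdots b$ from $P$. This introduces a new sign change at each boundary where the signs disagree, so $\rho_b(\hat h')$ can exceed $\rho_b(h')$ by $2$, while $\rho_b(\hat h)$ drops by at most $1$. Hence $\rho_b(\hat h)+\rho_b(\hat h')$ can go \emph{up}, and ``generically this gives a decrease'' is simply false. The paper avoids this by a different mechanism: it works the Normal Form systems of \emph{both} $ghg^{-1}=k$ and $gh'g^{-1}=k'$ simultaneously from the relevant ends, eliminating the common auxiliary terms $z_i,z'_i$. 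Either the resulting equalities hold freely long enough to reach a sign change---and then a single conjugation genuinely strips a sign change from $h'$ (or from both) without creating new ones---or an exceptional equality for $F(A^*,B^*)\cap F(A^*_+,B^*,C^*)$ appears earlier, in which case one of the members decomposes into two factors each with fewer sign changes, and minimality forces commutativity. Your ``boundary configuration'' paragraph gestures at this but does not supply the key ingredient, namely the simultaneous front/back elimination between the two Normal Form systems that makes the reduction go through in every parity combination (Cases 4.6.1--4.6.3).
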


\begin{proof} In Claims \ref{Claim 4.2}, \ref{Claim 4.3} and \ref{Claim 4.4},  we have verified that there are no counterpairs satisfying $l_b(g)=0$ and {\rm min}$\{l_b(h),l_b(h')\} >0$ under any of the additional hypotheses $\rho_b(h)+ \rho_b(h') =0$, $\rho_b(h)+ \rho_b(h') =1$, and $\rho_b(h)+ \rho_b(h') =2$ with $\rho_b(h)= \rho_b(h') =1$.  This leaves us with the following cases.

{\bf Case 4.6.1}\qua $\rho_b(h)+ \rho_b(h') \ge 2$ and both are even.
 
{\bf Case 4.6.2}\qua $\rho_b(h)+ \rho_b(h') \ge 3$ and one is odd and the other is even.

{\bf Case 4.6.3}\qua $\rho_b(h)+ \rho_b(h') \ge 4$ and both are odd.

We assume that we have a counterpair with $\rho_b(h)+ \rho_b(h') \ge 2$ and minimal where, without loss of generality, we can assume that $\rho_b(h) \leq \rho_b(h')$. We need to split this into two subcases.

{\bf Case 4.6.1a}\qua Let $\rho_b(h)=0$ so that $\rho_b(h') \ge  2$.  Then we can write 
\begin{align*}ghg^{-1} &= gh_0bh_1 \ldots bh_m= k_0bk_1 \ldots bk_m=k\\
gh'g^{-1} &=gh'_0bh'_1 \ldots bh'_jb^{-1}h'_{j+1} \ldots b^{-1}h'_lbh'_{l+1} \ldots bh'_ng^{-1}\tag*{\hbox{and}} \\
&= k'_0bk'_1 \ldots bk'_jb^{-1}k'_{j+1} \ldots b^{-1}k'_lbk'_{l+1} \ldots bk'_n=k'\end{align*}
where the sign changes displayed in $h'$and $k'$ are the initial two. 

Suppose $j \leq m$; when we analyse the two systems of Normal Form inequalities, one of two possibilities occurs.  The first is that we can iteratively obtain equalities $\smash{h'_0} = h_0$, $\smash{k'_0 = k_0}$, $\smash{z'_0 = z_0}$, $\smash{h'_1=h_1,k'_1 = k_1,z'_1=z_1}, \ldots, \smash{h'_{j-1} = h_{j-1}}$, $\smash{k'_{j-1} = k_{j-1}}$, $\smash{z'_{j-1}}=z_{j-1}$ because the successive equalities derived by elimination hold freely.  In this case we conjugate by $gh_0bh_1 \ldots bh_{j-1}b=k_0bk_1 \ldots bk_{j-1}b{\overleftarrow {z_{j-1}}}$ to obtain a new counterpair of the form $\smash{{\overleftarrow {z_{j-1}}} \hat h{\smash{\overleftarrow {z_{j-1}}}}^{\!-1} = \hat k}$,  $\smash{{\overleftarrow {z_{j-1}}} \hat h'{\smash{\overleftarrow {z_{j-1}}}^{\!-1}} = \hat k'}$ where $\hat h = h_jbh_{j+1}\! \ldots bh_m h_0bh_1\!\ldots bh_j$, $\hat h' = h'_jb^{-1}h'_{j+1} \!\ldots b^{-1}h'_lb \ldots bh'_nh_0bh_1 \!\ldots bh_j$, and similarly for $\smash{\hat k}$ and $\smash{\hat k'}$. In both cases no new sign changes are introduced and $h'$ has been stripped of its initial sign change.  Thus $\rho_b(\hat h)=\rho_b(h) =0$ and $\rho_b(\hat h') = \rho_b(h') - 1$ and we contradict minimality  using \fullref{Claim 4.2} if  $\rho_b(h') =2$.

The second possibility is that our sequence of free equalities breaks down and we obtain an exceptional equality for $F(A^*,B^*) \cap F(A^*_+,B^*,C^*)$ of the form $$z_f^{-1}k_f^{-1}k'_fz'_f = h_f^{-1}h'_f,$$ for some $f \leq j-1$. In particular $\{z_f, z_f'\} = \{v_1, v_2\}$.  Here we use $gh_0bh_1 \ldots bh_f=k_0bk_1 \ldots bk_fz_f$ to obtain a conjugate counterpair $$z_f\hat h z_f^{-1} = \hat k, z_f\hat h' z_f^{-1} = \hat k'.$$  Again we simply permute $h$ to obtain $\hat h$ whereas 
\begin{align*}z_f\hat h'z_f^{-1} &= z_fh_f^{-1}h'_fbh'_{f+1} \ldots bh'_jb^{-1} \ldots b^{-1}h'_lb \ldots bh'_nh_fz_f^{-1}\\
&= k_f^{-1}k'_fbk'_{f+1} \ldots bk'_jb^{-1} \ldots b^{-1}k'_lb \ldots bk'_nk_f =\hat k'.\end{align*}   From this last equality we deduce the two equalities \begin{align*}z_fh_f^{-1}h'_fbh'_{f+1} \ldots bh'_jb^{-1} \ldots b^{-1}h'_l &= k_f^{-1}k'_fbk'_{f+1}{z'}_l^{-1} \ldots bk'_jb^{-1} \ldots b^{-1}k'_l \\ z'_lbh'_{l+1} \ldots bh'_nh_0bh_1 \ldots bh_jz_f^{-1} &= bk'_{l+1} \ldots bk'_nk_0bk_1 \ldots bk_j.\tag*{\hbox{and}}\end{align*}  However $\{z_f, z'_f\} = \{v_1, v_2\} = \{z_l, z_l'\}$ and $u=v_1^{-1}v_0v_2$ and this means that $z_l' = z_f$, $\smash{z_l' = v_0z_fv_2^{-1}}$ or $\smash{z_l' = v_0^{-1}z_fu}$.  Recalling that $\smash{v_1= v_0v_2u^{-1}}$ and hence $\smash{v_2= v_0^{-1}v_1u}$, we can transform $\smash{z_f\hat h' z_f^{-1}= \hat k'}$ into an equality $\smash{z_f\tilde h' z_f^{-1}z_f\breve h'z_f^{-1}= \breve k'}$ where $\rho_b(\tilde h') = \rho_b(h')- 1$ and $\rho_b(\breve h') =0$. By minimality, using \fullref{Claim 4.1} if $\rho_b(h')=2$, it follows that $h$ commutes with both $\tilde h'$ and  $\breve h'$ and hence with $\hat h$, which is the contradiction we require to conclude the argument when $j \leq m$.

If $j>m$ we have essentially the same possibilities as before, save that when we have free equalities we might need to rotate $h$ several times before we reach either the situation when we get free equalities involving $h'_{j-1}, k_{j-1}$ and $z'_{j-1}$ or we obtain an exceptional equality before $h'_{j-1}, k_{j-1}$ and $z'_{j-1}$ are involved. (An alternative view is to say that we make a minimal choice of $n=l_b(h')$ and then, if we obtain $m$ free equalities, we replace our original pairs $(h,k)$ and $(h',k')$ by  $(h,k)$ and $(h^{-1}h',k^{-1}k')$.)
 
{\bf Case 4.6.1b}\qua Let $\rho_b(h)\ge 2$ so that $\rho_b(h)+\rho_b(h') \ge  4$.  If we assume, without loss of generality that $i \ge j$, then the argument given above can be repeated more or less verbatim. If $i>j$, any conjugation used will preserve $\rho_b(h)$ while if $i=j$ and the free equalities are valid as far as $i-1=j-1$, the conjugation used will reduce $\rho_b(h)+ \rho_b(h')$ by $2$. 

{\bf Case 4.6.2}\qua Let $\rho_b(h)+ \rho_b(h') \ge 3$ where one is odd and the other is even.

Without loss of generality, we may suppose that $\rho_b(h)$ is even and $\rho_b(h')$ is odd, not excluding the possibility that $\rho_b(h)=0$, in which case $\rho_b(h') \ge 3$.  Also, by inverting $ghg^{-1}=k$, if necessary, we can assume that the two terms of our counterpair have the same initial exponent for $b$ 

We proceed much as in Case 4.6.1. However, there we attacked the terms of our counterpair by obtaining a sequence of equalities $z_0^{-1}k_0^{-1}k'_0z'_0 = , z_1^{-1}k_1^{-1}k'_1z'_1=h_1^{-1}h'_1 ,\ldots$ until we found one that did not hold freely.  This time we have three sequences of such equalities because of the fact that $\rho_b(h')$ is odd -- the initial three equalities are  $z_0^{-1}k_0^{-1}k'_0z'_0 = h_0^{-1}h'_0, z_0^{-1}k_0^{-1}k_m^{-1}z_m = h_0^{-1}h_m^{-1}, {z'_n}^{-1}k'_nk'_0z'_0 = h'_nh'_0$.  If we can run these free inequalities until we reach a sign change in $h$ or $h'$ (if $\rho_b(h)$ =0, then only $h'$ is a possibility as discussed in the previous case), then conjugation will replace our original counterpair by a counterpair with fewer total sign changes. The conjugation will cycle positive occurrences of $b$ from the front of $h$ to the back of $h$ and will actually cancel occurrences of $b$ that occur in $h'$.  The other alternative is that we reach a point at which some equality is exceptional for $F(A^*,B^*) \cap F(A^*_+,B^*,C^*)$, in which case there will be a conjugate counterpair of the form $\smash{z_f\hat hz_f^{-1} = \hat k}$ or $\smash{z_f\hat h'z_f^{-1} = \hat k'}$ such that one or other (or possibly both) will decompose into two counterpairs, each of which contains fewer sign changes than our original.  The resulting commutativity derived from our assumption of minimality then yield the required contradiction.

{\bf Case 4.6.3}\qua Let $\rho_b(h)+ \rho_b(h') \ge 4$ where both are odd.   Initially let us assume that $h$ and $h'$ have positive exponent on the respective initial occurrences of $b$.  Then we are in a situation similar to that considered in \fullref{Claim 4.4} where we attack both terms of our counterpair  from the front and back. As we noted proving claim 4.4 there are potentially six apparently distinct sequences of equalities obtained by elimination from the Normal Form equalities.  Broadly our argument is the same as that for Case 4.6.2.  Either we can generate free inequalities right up to the point at which we reach a sign change, in which case conjugation will provide us with a new counterpair with fewer total sign changes or at some point, we produce an equality that is exceptional for $F(A^*,B^*) \cap F(A^*_+,B^*,C^*)$.  But again there will be a conjugate counterpair, one of whose terms will contain enough sign changes to allow us to decompose it into two factors, each containing fewer sign changes than the original term and we have the same commutativity conclusion.

It remains only to note that the argument of \fullref{Claim 4.5} in fact carries over verbatim to the present situation and allows us to drop out provisional hypothesis concerning the exponents of the respective initial occurrences of $b$. \end{proof}
 
We end this section by observing that the sequence of Claims \ref{Claim 4.1}--\ref{Claim 4.6} completes the proof of our main result, save that we have to verify the Propositions stated in the next section and which were used above.

\section{Technical results}\label{sec5}

As noted in \fullref{sec4} just prior to the application of the results we are about to prove, the material in this section parallels Proposition 6.2 of \cite{C} and we shall employ the methods, terminology and notation described there. Also, as noted at the start of \mbox{\fullref{sec4}}, we can assume that all of $A^*_+, A^*_-, C^*_+, C^*_-$ are nonempty.   As in \S 6 of \cite{C}, our initial standpoint is that we are given the exceptional intersection $$F(A^*,B^*) \cap F(A^*_+,B^*,C^*) = \langle u \rangle * F(A^*_+,B^*) = \langle v \rangle * F(A^*_+,B^*)$$ with $u = v_1^{-1}v_0v_2$, where $v_1,v_2$ are not both trivial, and in turn  $F(A^*,B^*,C^*_-) \cap F(A^*_+,B^*,C^*)$ is also exceptional with basic exceptional equality $s=t$ where $s$ is the $a_{\kappa}$--core of $u$ and $t \equiv u_1^{-1}vu_2^{-1}$, where $u \equiv u_1su_2$. We also write $t \equiv t_1\bar t t_2$ where $\bar t$ is
the $c_{\nu}$--core of $t$.  

We shall deal with two specific additional case assumptions, in each instance proving a result similar to Proposition 6.2 of \cite{C} (which is itself proved under its own set of assumptions additional to the basic standpoint of \S 6 of \cite{C}).

\begin{caa}\label{CAA} In $v_1^{-1}v_0v_2$, neither $v_1$ nor $v_2$ is intermediate.  Since both $v_1$ and $v_2$ lie in $F(A^*_+,B^*,C^*_+)$, this amounts to saying that both involve at least one of the two extremal generators $a_{\lambda}, c_{\nu}$.\end{caa}

Our first step is to prove an analogue of Lemma 6.1 of \cite{C}.

\begin{lem}\label{lemma5.1} Let $$F(A^*,B^*) \cap F(A^*_+,B^*,C^*) = \langle u \rangle * F(A^*_+,B^*) = \langle v \rangle * F(A^*_+,B^*)$$ with $u = v \equiv v_1^{-1}v_0v_2$ where \fullref{CAA} holds. Then:
\begin{enumerate}
\item [\rm(a)] A cyclically reduced word of the form $h^{-1}w^{-1}kz$, where $w\in L$ and $z\in U$ are both nontrivial of type $(A^*:C^*)$ and $h \in F(A^*,B^*)$, $k \in F(B^*,C^*)$ with $h,k$ nontrivial, cannot (cyclically) contain two disjoint Gurevich subwords.
\item [\rm(b)] A cyclically reduced word of the form $k^{-1}w^{-1}hz$, where $w\in L$ and $z\in U$ are both nontrivial of type $(C^*:A^*)$ and $h \in F(A^*,B^*)$, $k \in F(B^*,C^*)$ with $h,k$ nontrivial, cannot (cyclically) contain two disjoint Gurevich subwords.
\item [\rm(c)] A cyclically reduced word of the form $h^{-1}w^{-1}h'z$, where $w\in L$ and $z\in U$ are both nontrivial of type $(C^*:C^*)$ and also $h,h' \in F(A^*,B^*)$ are nontrivial, cannot (cyclically) contain two  disjoint Gurevich subwords.
\item [\rm(d)] A cyclically reduced word of the form $k^{-1}w^{-1}k'z$, where $w\in L$ and $z\in U$ are both nontrivial of type $(A^*:A^*)$ and also $k,k' \in F(B^*,C^*)$ are nontrivial cannot (cyclically) contain two  disjoint Gurevich subwords.
\end{enumerate}
\end{lem}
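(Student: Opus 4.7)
The plan is to argue by contradiction, closely following the template of Lemma 6.1 of \cite{C}, of which the present lemma is a variant under \fullref{CAA}. I would treat part (a) in detail and indicate how parts (b)--(d) follow by analogous symmetry. Suppose, for contradiction, that a cyclically reduced word $W$ of the form $h^{-1}w^{-1}kz$ described in part (a) cyclically contains two disjoint Gurevich subwords $G_1$ and $G_2$. The first step is to pin down, using the type constraints, the precise positions in $W$ where each of the extremal generators $a_\kappa, a_\lambda, c_\mu, c_\nu$ can occur. Since $w \in L = F(A^*_-,B^*,C^*_-)$ is of type $(A^*:C^*)$, it begins with a letter from $A^*_-$ and ends with a letter from $C^*_-$; in particular $w^{-1}$ contains no occurrence of $a_\lambda$ or $c_\nu$ and does not begin or end with $a_\kappa$ or $c_\mu$. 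Likewise $z \in U = F(A^*_+,B^*,C^*_+)$ of type $(A^*:C^*)$ contains no $a_\kappa$ or $c_\mu$ and does not begin or end with $a_\lambda$ or $c_\nu$. The factors $h^{-1} \in F(A^*,B^*)$ and $k \in F(B^*,C^*)$ are free of $c$-generators and $a$-generators respectively, so all extremal $c$-generators live in $w^{-1} \cup k \cup z$, and all extremal $a$-generators live in $h^{-1} \cup w^{-1} \cup z$.

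Next I would invoke the structural content of Gurevich subwords as employed in Section 6 of \cite{C}: each $G_i$ carries a prescribed minimum count of occurrences of the extremal generators, which must be distributed among the permitted locations identified above. The count imposed by two disjoint Gurevich subwords cannot be accommodated inside $W$ without violating one of the type conditions on $w, z$ or the membership constraints on $h, k$ --- for example, by forcing $h^{-1}$ to contain a $c$-generator, or forcing $w$ or $z$ to begin or end with an extremal generator of the wrong subscript. In each such configuration one obtains the sought contradiction.

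The main obstacle, and the place where \fullref{CAA} is used decisively, is in ruling out the one remaining configuration where the raw tally alone does not fail: namely, where each $G_i$ straddles the boundary between two consecutive factors of $W$ in a compatible way. Here the $a_\kappa$-core $s$ of $u$ and the $c_\nu$-core $\bar t$ of $t$ (underlying the basic exceptional equality $s=t$) enter exactly as in Section 6 of \cite{C}. Since under \fullref{CAA} both $v_1$ and $v_2$ involve at least one of $a_\lambda, c_\nu$, any Gurevich subword matching $r^*$ must align with $v_1$ or $v_2$ in a specific way, and two disjoint matches produce an inconsistency between the two extremal cores. Parts (b)--(d) then follow by parallel arguments: (b) swaps the roles of $A$ and $C$; for (c) both $h, h'$ lie in $F(A^*,B^*)$ while $w, z$ are of type $(C^*:C^*)$, so all extremal $a$-generators must be confined to $h, h'$, tightening the counting step; (d) is symmetric to (c) under the $A \leftrightarrow C$ exchange. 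The delicate part throughout is the careful bookkeeping of extremal generator occurrences across the various possible positions of the two Gurevich subwords relative to the four factors of $W$.
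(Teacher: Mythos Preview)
Your outline has the right shape but misses the actual mechanism and introduces a red herring. The paper's proof does not use a ``count'' of extremal generators, nor does it invoke the cores $s$ and $\bar t$ of $u$ and $t$; those play no role in this lemma. What the paper does is first pin down that any extremal Gurevich subword of $h^{-1}w^{-1}kz$ must have one of the four precise shapes
\[
h_1^{-1}w^{-1}k_1,\quad w_1^{-1}kz_1,\quad k_2zh_2^{-1},\quad z_2h^{-1}w_2^{-1},
\]
because none of the two-factor products $h^{-1}w^{-1}$, $w^{-1}k$, $kz$, $zh^{-1}$ contains all essential generators, and dually no such product can be contained in one of the two disjoint subwords. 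This reduces the possible disjoint pairs to $\{h_1^{-1}w^{-1}k_1,\,k_2zh_2^{-1}\}$ or $\{w_1^{-1}kz_1,\,z_2h^{-1}w_2^{-1}\}$. You never isolate these shapes, so you have no handle on where the pieces of $u^{-1}v_1^{-1}v_0v_2$ can sit.

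The decisive step, which your proposal does not carry out, is then purely structural: in each of these shapes the occurrence of $a_\kappa^{\pm1}$ (coming from $u^{\pm1}$) and the occurrence of $c_\mu^{\pm1}$ (coming from $v_0^{\pm1}$) bracket between them a full copy of $v_1^{\pm1}$ or $v_2^{\pm1}$, and the positional constraints force this copy to lie entirely inside $w^{-1}$. Since $w\in L=F(A^*_-,B^*,C^*_-)$ omits both $a_\lambda$ and $c_\nu$, that would make $v_1$ or $v_2$ intermediate, directly contradicting Case Assumption~A. That single observation disposes of all cases in (a), and the same trick (forcing $v_1^{\pm1}$ or $v_2^{\pm1}$ into $w^{-1}$) handles (c). Your ``inconsistency between the two extremal cores'' is not what happens; the contradiction is simply \emph{non-intermediate $v_i$ forced into an intermediate slot}. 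Also, your claim that $w^{-1}$ ``does not begin or end with $a_\kappa$ or $c_\mu$'' is false: $w$ of type $(A^*{:}C^*)$ in $L$ begins with an $A^*_-$--syllable and ends with a $C^*_-$--syllable, so $a_\kappa$ and $c_\mu$ may well appear at the ends of $w$.
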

\begin{proof} It suffices to prove (a) and (c) since (b) is just a dual rewording of (a) and (d) is a dual rewording of (c). 
 
(a)\qua Suppose we have two disjoint Gurevich subwords of $h^{-1}w^{-1}kz$; then there are two disjoint extremal Gurevich subwords. Now neither extremal Gurevich subword can be a subword of any of  $h^{-1}w^{-1}, w^{-1}k,kz, zh^{-1}$, for each of these omits an essential generator. Moreover, neither extremal Gurevich subword can contain any of  $h^{-1}w^{-1}, w^{-1}k,kz, zh^{-1}$, for then its companion extremal Gurevich subword would be a subword of one of $h^{-1}w^{-1}, w^{-1}k,kz, zh^{-1}$.  It follows, therefore that an extremal Gurevich subword must take one of the four forms  $$h_1^{-1}w^{-1}k_1, w_1^{-1}kz_1, k_2zh_2^{-1}, z_2h^{-1}w_2^{-1},$$ where $w_1, w_2$ denote proper, nontrivial, initial and terminal segments of $w$ and similarly for $h,\ k$ and $z$, and that a pair must be either $\{h_1^{-1}w^{-1}k_1, k_2zh_2^{-1}\}$ or $\{w_1^{-1}kz_1, z_2h^{-1}w_2^{-1}\}$.
 
Suppose that a word of form $\smash{h_1^{-1}w^{-1}k_1}$ is an extremal Gurevich subword. Then $\smash{a_{\kappa}^{\pm 1}}$, which must be obtained from $\smash{u^{\pm 1}}$ can appear either in $\smash{h_1^{-1}}$ or in $w^{-1}$ and similarly $\smash{c_{\mu}^{\pm 1}}$ from $\smash{v_0^{\pm 1}}$ can appear either in $w^{-1}$ or $k_1$. Wherever they appear, the occurrences of $\smash{a_{\kappa}^{\pm 1}}$ and $\smash{c_{\mu}^{\pm 1}}$ will properly enclose between them, a string of syllables of $\smash{h_1^{-1}w^{-1}k_1}$ that are distinct from those containing $a_{\kappa}^{\pm 1}$ and $c_{\mu}^{\pm 1}$ and which constitute an occurrence of either $\smash{v_1^{\pm 1}}$ or $\smash{v_2^{\pm 1}}$.  This means that one of these must occur within $w^{-1}$ which contradicts the fact that neither $v_1^{\pm 1}$ nor $v_2^{\pm 1}$is intermediate. This rules out the first possibility for a pair.

The second possibility for a pair includes a word of form $\smash{w_1^{-1}kz_1}$ as an extremal Gurevich subword.  For this word, $\smash{a_{\kappa}^{\pm 1}}$ can appear only in $\smash{w_1^{-1}}$ and $\smash{c_{\mu}^{\pm 1}}$ in  $w_1^{-1}$ or $k$.  An analysis similar to the previous possibility  forces either $\smash{v_1^{\pm 1}}$ or $\smash{v_2^{\pm 1}}$ to lie within $w_1^{-1}$ which is impossible.

(c)\qua In a manner parallel to the argument for (a), we see that no member of a pair of disjoint extremal Gurevich subwords can be contained in or contain any of $h^{-1}w^{-1}, w^{-1}h',h'z, zh^{-1}$. Furthermore we cannot have a pair of extremal Gurevich subwords of the form $h_1^{-1}w^{-1}h'_1, h'_2zh_2^{-1}$ and so the only possible form for a pair is $\smash{w_1^{-1}h'z_1, z_1h^{-1}w_2^{-1}}$. Observing that $\smash{c_{\mu}^{\pm 1}}$ can appear only in $w^{-1}$ while $\smash{a_{\kappa}^{\pm 1}}$ can appear in $h^{-1},h'$ or $w^{-1}$ we see that we are forced to try to position $v_1^{\pm 1}$ or $v_2^{\pm 1}$ within $w^{-1}$, which is impossible.
\end{proof}

The following is the first of our two results that parallels Proposition 6.2. of \cite{C}

\begin{prop}\label{Proposition 5.2} Let $$F(A^*,B^*) \cap F(A^*_+,B^*,C^*) = \langle u \rangle * F(A^*_+,B^*) = \langle v \rangle * F(A^*_+,B^*),$$ where $u =v$ in $G^*$, $v$ is $v_1^{-1}v_0v_2$ and \fullref{CAA} holds. Furthermore, let the equality $wh=kz$, where $w\in L$ and $z\in U$ are both nontrivial of type $(A^*:C^*)$ and $h \in F(A^*,B^*)$, $k \in F(B^*,C^*)$, define an element of $F(A^*,B^*,C^*_-) \cap F(A^*_+,B^*,C^*)$.  Then the element defined by $wh=kz$ is non-exceptional and the equality holds freely in $F(A^*_+,B^*,C^*_-)$ -- in particular, $h=k=1$ and $w \equiv z$ is intermediate.
\end{prop}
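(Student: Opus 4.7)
The approach parallels Proposition 6.2 of \cite{C}, with the Gurevich-subword bookkeeping of Lemma 6.1 of \cite{C} now replaced by \fullref{lemma5.1}(a). I will argue by contradiction: assume the element of $F(A^*,B^*,C^*_-)\cap F(A^*_+,B^*,C^*)$ defined by $wh=kz$ is exceptional, i.e., does not lie in $F(A^*_+,B^*,C^*_-)$.

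The first step is to rewrite the hypothesis $wh=kz$ as the single $G^*$-relation $h^{-1}w^{-1}kz=1$. Because $h,k$ are nontrivial and $w,z$ are nontrivial of type $(A^*:C^*)$, the word $W\equiv h^{-1}w^{-1}kz$ is cyclically reduced as written, and its four-block structure is precisely the one to which \fullref{lemma5.1}(a) applies. The heart of the argument will then be to show, following the van~Kampen / Weinbaum--Gurevich analysis of \S 6 of \cite{C}, that if the defining element is exceptional then the basic exceptional relation $s=t$ for $F(A^*,B^*,C^*_-)\cap F(A^*_+,B^*,C^*)$ (with $s$ the $a_{\kappa}$-core of $u$ and $t\equiv u_1^{-1}vu_2^{-1}$, as recalled at the opening of \fullref{sec5}) must appear along the boundary of the associated disk in two disjoint pieces, each completed by a Gurevich subword of $r^*$; two disjoint Gurevich subwords in $W$ then contradict \fullref{lemma5.1}(a). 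This is the step I expect to be the main obstacle, since it requires careful combinatorial bookkeeping to place the two pieces within the four-block decomposition of $W$ and to use \fullref{CAA} (neither $v_1$ nor $v_2$ is intermediate) to rule out the configurations in which the basic exceptional word might sit entirely inside a single block or across only one block boundary.

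Having ruled out the exceptional case, $wh=kz$ must hold freely in $F(A^*_+,B^*,C^*_-)$. Because $h\in F(A^*,B^*)$ has no $C^*$-letters and $k\in F(B^*,C^*)$ has no $A^*$-letters, and because $w$ begins with an $A^*$-syllable and ends with a $C^*$-syllable (and similarly for $z$), there is no room for cancellation at the $w$--$h$ or $k$--$z$ junctions, and the only way the freely reduced forms of $wh$ and $kz$ can agree letter for letter is that $h=k=1$ and $w\equiv z$ as words. A word that lies simultaneously in $L$ and $U$ and is of type $(A^*:C^*)$ must then omit all four extremal generators $a_\kappa,a_\lambda,c_\mu,c_\nu$ --- i.e., be intermediate --- which is the claimed conclusion.
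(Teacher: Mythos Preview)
Your overall strategy---contradiction via the Gurevich-subword machinery, invoking \fullref{lemma5.1}(a)---matches the paper's, but there are two genuine gaps.

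First, you assert ``Because $h,k$ are nontrivial\ldots the word $W\equiv h^{-1}w^{-1}kz$ is cyclically reduced as written.'' The hypothesis of the proposition does \emph{not} assume $h,k$ nontrivial; that is part of the conclusion. The paper therefore splits into four cases according to which of $h,k$ are trivial. When, say, $h=1$ and $k\neq 1$, the word $kzw^{-1}$ is not cyclically reduced: $w$ and $z$ may share a common terminal segment that must be cancelled first. After cancellation the block structure changes---one may land in a configuration of type $kz'h'^{-1}w'^{-1}$ or $kz'k'^{-1}w'^{-1}$---and it is parts (a) and (d) (not just (a)) of \fullref{lemma5.1} that are then needed. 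The cases $k=1$, $h\neq 1$ and $h=k=1$ require analogous cyclic-reduction bookkeeping. Your argument as written covers only the case $h,k\neq 1$.

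Second, even in that case, \fullref{lemma5.1}(a) only forbids \emph{two disjoint} Gurevich subwords. The Weinbaum--Gurevich dichotomy leaves open the possibility that $h^{-1}w^{-1}kz$ is itself a cyclic rearrangement of $(u^{-1}v_1^{-1}v_0v_2)^{\pm 1}$, and this must be excluded separately. The paper does so by locating $u^{\pm 1}$ (carrying $a_\kappa$) and $v_0^{\pm 1}$ (carrying $c_\mu$) among the four blocks and observing that any placement forces $v_1^{\pm 1}$ or $v_2^{\pm 1}$ to sit inside $w^{-1}$, contradicting \fullref{CAA}. Your paragraph gestures at this (``rule out the configurations in which the basic exceptional word might sit entirely inside a single block\ldots'') but does not actually carry it out, and you frame everything in terms of the $s=t$ relation for $F(A^*,B^*,C^*_-)\cap F(A^*_+,B^*,C^*)$ rather than working directly with $u^{-1}v_1^{-1}v_0v_2$, which is what the extremal-generator placement argument uses.

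Your final paragraph, deducing $h=k=1$ and $w\equiv z$ intermediate from the free equality, is essentially correct.
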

\begin{proof} Suppose, by way of contradiction, that the element defined is exceptional so that use of the basic exceptional relation is required.   

{\bf Case 5.2.1}\qua Suppose that $h$ and $k$ are nontrivial so that $h^{-1}w^{-1}kz$ is cyclically reduced as written. If we apply \fullref{lemma5.1}(a), then it remains only to show that the cyclically reduced form $h^{-1}w^{-1}kz$ cannot be a cyclic rearrangement of $(u^{-1}v_1^{-1}v_0v_2)^{\pm 1}$.  As before we look to see where the extremal generators are situated.  In particular we observe that $u^{\pm 1}$ either coincides with $h^{-1}$ or is a syllable of $w^{-1}$.  Similarly $v_0^{\pm 1}$ either coincides with $k$ or is a syllable of $w^{-1}$. No matter which possibility occurs, we finish up, as previously, trying to position $v_1^{\pm 1}$ or $v_2^{\pm 1}$ within $w^{-1}$.

{\bf Case 5.2.2}\qua Suppose that $h=1$ and $k$ is nontrivial. Now $w$ and $z$ may have a common terminal segment which will be cancelled in obtaining the cyclically reduced form of $kzw^{-1}$; notice however that no occurrences of extremal generators -- and $a_{\kappa}$ must appear in $w$ and $a_{\lambda}$ in $z$ -- will be cancelled.  Then we can write $w\equiv w_1w_2$ and $z\equiv z_1z_2$ where $w_2\equiv z_2$ is the maximal common terminal segment of $w$ and $z$, with $w_1,z_1$ nontrivial.  Then the resulting cyclically reduced word will be either of the form $kz'h'^{-1}w'^{-1}$ or $kz'k'^{-1}w'^{-1}$, depending on the exact nature of $w_2$ and $z_2$ in relation to $w$ and $z$, with $h'$, respectively $k'$, nontrivial. Then we can apply either \fullref{lemma5.1} (a) or (d) to deduce that the only possibility for this word is that it is a cycle of $(u^{-1}v_1^{-1}v_0v_2)^{\pm 1}$. The argument for Case 5.2.1 disposes of the possibility that we have $kz'h'^{-1}w'^{-1}$. 

 To finish this case we verify that $kz'k'^{-1}w'^{-1}$ cannot be a cycle of $\smash{(u^{-1}v_1^{-1}v_0v_2)^{\pm 1}}$.  This time $u^{\pm 1}$ must be a syllable of $w'^{-1}$ while $v_0^{\pm 1}$ can be $k,k'^{-1}$ or a syllable of $w'^{-1}$; but of course we then have to position  $v_1^{\pm 1}$ or $v_2^{\pm 1}$ within $w'^{-1}$. 

{\bf Case 5.2.3}\qua Suppose that $h$ is nontrivial and $k=1$ . This is clearly dual to Case 5.2.2, by considering $w^{-1}zh^{-1}$ and the consequent cyclically reduced form.

{\bf Case 5.2.4}\qua Suppose that $h=k=1$.  This time we simply examine $w^{-1}z$ but have to allow for both common initial segments and common terminal segments, observing that both will have to be intermediate words. The resulting cyclically reduced form will fall into one of the previous categories we have considered. 
\end{proof}

For the next three results we replace \fullref{CAA} by the following. 

\begin{caa}\label{CAB} In $v_1^{-1}v_0v_2$, both $v_1$ and $v_2$ are nontrivial and intermediate. \end{caa}
 
\begin{lem} Let $$F(A^*,B^*) \cap F(A^*_+,B^*,C^*) = \langle u \rangle * F(A^*_+,B^*) = \langle v \rangle * F(A^*_+,B^*)$$ with $u = v_1^{-1}v_0v_2$ where \fullref{CAB} holds. Then $F(A^*,B^*,C^*_-) \cap F(B^*,C^*)$ is exceptional.  Moreover if the basic exceptional equality is $p_1p_0p_2^{-1} = q$ , then, under the conventions described prior to Proposition 5.5 of \cite{C}, $p_1 = v_1, p_0 = u, p_2 = v_2$ and $q=v_0$. \end{lem}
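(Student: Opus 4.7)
My plan is to produce the required exceptional element of $F(A^*,B^*,C^*_-) \cap F(B^*,C^*)$ by rearranging the given basic exceptional relation, and then invoke the single syllable criterion of Proposition 5.5 of \cite{C} to pin down the form of the resulting decomposition.

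First, I would rewrite the given exceptional equality $u = v_1^{-1}v_0v_2$ as $v_1 u v_2^{-1} = v_0$. By \fullref{CAB}, both $v_1$ and $v_2$ are intermediate, so they lie in $F(\{a_{\kappa+1},\ldots,a_{\lambda-1}\}, B^*, \{c_{\mu+1},\ldots,c_{\nu-1}\}) \subseteq F(A^*_+,B^*,C^*_-)$; combined with $u \in F(A^*,B^*)$ this places $v_1 u v_2^{-1}$ inside $F(A^*,B^*,C^*_-)$. Since $v_0 \in F(B^*,C^*)$ by the standard form of the basic exceptional decomposition in \cite{C}, the equality $v_1 u v_2^{-1} = v_0$ produces a common element of $F(A^*,B^*,C^*_-)$ and $F(B^*,C^*)$.

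Second, I would show that this common element $v_0$ does not lie in $F(B^*,C^*_-)$, which is exactly what it means for the intersection to be exceptional. The cyclic word $u^{-1}v_1^{-1}v_0v_2$ is (up to cyclic rearrangement and inversion) the defining relator $r^*$, so in particular the extremal generator $c_{\nu}$ must occur somewhere in it. But $c_{\nu}$ cannot appear in $u$ (which lies in $F(A^*,B^*)$), nor in $v_1$ or $v_2$ (both intermediate). Hence $c_{\nu}$ must occur in $v_0$, so $v_0 \notin F(B^*,C^*_-)$. Corollary 5.4 of \cite{C} then yields the basic exceptional equality $p_1p_0p_2^{-1} = q$ for $F(A^*,B^*,C^*_-) \cap F(B^*,C^*)$.

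Third, I would identify the pieces of the two decompositions. Both $u^{-1}v_1^{-1}v_0v_2$ and $p_0^{-1}p_1^{-1}qp_2$ are cyclic rearrangements of $r^*$ and hence represent the same cyclic word. Proposition 5.5 of \cite{C} (the single syllable criterion) ensures that each of the extremal generators $a_{\kappa}, a_{\lambda}, c_{\mu}, c_{\nu}$ occurs in exactly one syllable of each expression; matching these positions up forces the identifications $p_1 = v_1$, $p_0 = u$, $p_2 = v_2$ and $q = v_0$, under the conventions described immediately before Proposition 5.5 which unambiguously select which handle is labelled $p_1$ versus $p_2$.

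The main obstacle is the third step. The single syllable criterion tells us the two four-syllable cyclic factorizations of $r^*$ must coincide, but care is needed to rule out a possible swap of $v_1$ with $v_2$ or an overall inversion of the basic exceptional relation. The positions of $a_{\lambda}$ and $c_{\nu}$ in $r^*$ (relative to $a_{\kappa}$ and $c_{\mu}$) distinguish the two sides, and combined with the standing conventions they yield the identifications uniquely; verifying this matching is where the bulk of the bookkeeping lies.
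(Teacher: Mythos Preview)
Your proposal is correct and is essentially an explicit unpacking of what the paper records in one line as ``immediate from the definitions involved.''  Your first two steps---rewriting $u=v_1^{-1}v_0v_2$ as $v_1uv_2^{-1}=v_0$, observing that intermediacy of $v_1,v_2$ places the left side in $F(A^*,B^*,C^*_-)$, and using the presence of $c_{\nu}$ in $v_0$ to rule out membership in $F(B^*,C^*_-)$---are exactly the content the paper is taking for granted.

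One remark on your third step: invoking the single syllable criterion (Proposition~5.5 of \cite{C}) is more than is needed here.  Once you have the equality $v_1uv_2^{-1}=v_0$ with $v_1,v_2$ intermediate (hence in $L\cap U$), $u\in F(A^*,B^*)$ and $v_0\in F(B^*,C^*)$, this expression already \emph{is} in the canonical form $p_1p_0p_2^{-1}=q$ prescribed by the conventions preceding Proposition~5.5 of \cite{C}; the identifications $p_1=v_1$, $p_0=u$, $p_2=v_2$, $q=v_0$ then follow from the uniqueness of that normal form, not from matching extremal generators in two cyclic factorisations of $r^*$.  So your bookkeeping concern in the final paragraph does not really arise.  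This is a presentational point only; the argument as written is sound.
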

\begin{proof} This is immediate from the definitions involved. \end{proof}

We use the \textit{syllable length function} $L$ applicable to words of $F(A^*_+,B^*,C^*)$ or 
$F(A^*,B^*,C^*_-)$, defined as the number of syllables of $z$. The terms ``syllable'' and ``syllable length'' are defined at the end of \S 5 of \cite{C} but unfortunately the notation $L$ for this was not specifically defined there -- the reader should refer to \fullref{add2}. 

\begin{lem}\label{Lemma 5.4} Let $$F(A^*,B^*) \cap F(A^*_+,B^*,C^*) = \langle u \rangle * F(A^*_+,B^*) = \langle v \rangle * F(A^*_+,B^*)$$ with $u = v_1^{-1}v_0v_2$ where \fullref{CAB} holds. Then :
\begin{enumerate}
\item[\rm(a)] Let $h^{-1}w^{-1}kz$ be a cyclically reduced word, where $w\in L$ and $z\in U$ are both nontrivial of type $(A^*:C^*)$ and $h \in F(A^*,B^*)$, $k \in F(B^*,C^*)$ with $h,k$ nontrivial. Suppose that $\hbox{min}\{L(w),L(z)\} \leq \hbox{min}\{L(v_1),L(v_2)\}$.  If $h^{-1}w^{-1}kz$ contains a pair of disjoint extremal Gurevich subwords then such a pair must be of the form $h_1^{-1}w^{-1}k_1$ and $k_2zh_2^{-1}$ where $h_1,k_1$ are proper initial segments of $h$ and $k$, $h_2,k_2$ are proper terminal segments of $h$ and $k$ and the following hold:
\begin{enumerate} 
\item[\rm(i)] $h_1^{-1}w^{-1}k_1$ is of the form either
{\rm (1)} $u_1(a_{\kappa},a_{\lambda} )^{-1}v_1^{-1}v_{01}(c_{\mu},c_{\nu})$ with $w= v_1$ or  
{\rm (2)} $u_2(a_{\kappa},a_{\lambda})v_2^{-1}v_{02}^{-1}(c_{\mu},c_{\nu})$  with $w= v_2$;

\item[\rm(ii)] $k_2zh_2^{-1}$ is of the form either 
{\rm (3)} $v_{02}(c_{\mu},c_{\nu})v_2u_1(a_{\kappa},a_{\lambda} )^{-1}$  with $v_2 = z$ or 
{\rm (4)} $v_{01}(c_{\mu},c_{\nu})^{-1}v_1u_1(a_{\kappa},a_{\lambda})$ with $v_1 = z$.
\end{enumerate} 
In the above $u_1$, $v_{01}$, $u_2$, $v_{02}$ are appropriate initial or terminal segments of $u$ and $v_0$. 

\item[\rm(b)] Let $h^{-1}w^{-1}h'z$ be a cyclically reduced word, where $w\in L$ and $z\in U$ are both nontrivial of type $(C^*:C^*)$ and $h \in F(A^*,B^*)$, $k \in F(B^*,C^*)$ with $h,h'$ nontrivial. Suppose that $\hbox{min}\{L(w),L(z)\} \leq  \hbox{min}\{L(v_1),L(v_2)\}$. Then $h^{-1}w^{-1}h'z$ cannot (cyclically) contain two disjoint Gurevich subwords.

\item[\rm(c)] Let $k'^{-1}w^{-1}kz$ be a cyclically reduced word, where $w\in L$ and $z\in U$ are both nontrivial of type $(A^*:A^*)$ and $k,k' \in F(B^*,C^*)$ with $h,h'$ nontrivial. Suppose that $\hbox{min}\{L(w),L(z)\} \leq \hbox{min}\{L(v_1),L(v_2)\}$. Then $k'^{-1}w^{-1}z$ cannot (cyclically) contain two disjoint Gurevich subwords.
\end{enumerate}
\end{lem}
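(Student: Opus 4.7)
The plan is to adapt the proof of Lemma 5.1, again tracking where each of the four extremal generators $a_\kappa, a_\lambda, c_\mu, c_\nu$ can appear in the pieces of each cyclically reduced word, and using the fact that any extremal Gurevich subword is a cyclic rearrangement of $(u^{-1}v_1^{-1}v_0v_2)^{\pm 1}$ containing each extremal generator exactly once. The crucial difference from Case Assumption A is that under CAB both $v_1$ and $v_2$ are intermediate, so they can in principle sit as subwords inside $w^{-1}$ or $z$; the role of the new hypothesis $\min\{L(w),L(z)\} \leq \min\{L(v_1),L(v_2)\}$ is to make such placements possible only when $w$ or $z$ coincides (in syllable length) with one of $v_1, v_2$.

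For part (a), I would first argue, as in Lemma 5.1(a), that a pair of disjoint extremal Gurevich subwords must straddle opposite boundaries of the four-piece decomposition $h^{-1} \cdot w^{-1} \cdot k \cdot z$, giving only the two possible configurations $\{h_1^{-1}w^{-1}k_1,\ k_2zh_2^{-1}\}$ and $\{w_1^{-1}kz_1,\ z_2h^{-1}w_2^{-1}\}$. For each member, $u^{\pm 1}$ supplies $a_\kappa$ and $v_0^{\pm 1}$ supplies $c_\mu$, and between these two anchors the intermediate word $v_1^{\pm 1}$ or $v_2^{\pm 1}$ must appear. The length inequality forces this intermediate word to coincide as a whole with $w$ (respectively $z$) rather than sit as a proper subword, which in turn rules out the second configuration since it would require $v_i^{\pm 1}$ to be positioned within a proper segment of $w$ or $z$. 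Pattern-matching against $u^{-1}v_1^{-1}v_0v_2$ and its cyclic inverse then yields the four enumerated shapes in (i) and (ii).

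For parts (b) and (c), since $w, z$ are of type $(C^*:C^*)$ or $(A^*:A^*)$ respectively, two of the extremal generators are entirely excluded from $w^{-1}$ and $z$. Mimicking Lemma 5.1(c) and (d), the only surviving configuration for disjoint extremal Gurevich subwords forces a copy of $v_1^{\pm 1}$ or $v_2^{\pm 1}$ to lie as a subword within a proper initial or terminal segment of $w$ or $z$. But since every such proper segment has strictly smaller syllable length than $w$ or $z$, and $\min\{L(w),L(z)\} \leq \min\{L(v_1),L(v_2)\}$, this is impossible and no such pair exists.

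The main obstacle will be the detailed bookkeeping in part (a): verifying that the enumeration of four shapes is exhaustive, handling separately the cases where the occurrences of $a_\kappa$ or $c_\mu$ fall inside $h^{-1}, k$ versus inside $w^{-1}, z$, and confirming that the alignment of the segments $u_1, u_2, v_{01}, v_{02}$ within the basic exceptional relator $u^{-1}v_1^{-1}v_0v_2$ (and its cyclic inverse) is consistent with the type $(A^*:C^*)$ of $w$ and $z$ on both sides.
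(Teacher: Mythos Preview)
Your plan is essentially the paper's: reduce to the two possible configurations $\{h_1^{-1}w^{-1}k_1,\ k_2zh_2^{-1}\}$ and $\{w_1^{-1}kz_1,\ z_2h^{-1}w_2^{-1}\}$ for a disjoint extremal pair and analyse each. Two points deserve sharpening.

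For the first configuration in (a), the paper does \emph{not} invoke the length inequality to obtain the four listed forms. The key observation is that under \fullref{CAB} the syllable $u$ contains \emph{both} $a_\kappa$ and $a_\lambda$, so $u^{\pm1}$ cannot be matched against any syllable of $w$ (which omits $a_\lambda$) or of $z$ (which omits $a_\kappa$); hence $a_\kappa$ and $a_\lambda$ sit together inside $h_1^{-1}$ (respectively $h_2^{-1}$). The parallel remark for $v_0$ and $c_\mu,c_\nu$ forces $v_0^{\pm1}$ into $k_1$ (respectively $k_2$). This already pins $w$ and $z$ exactly to $v_1^{\pm1}$ or $v_2^{\pm1}$ and yields the four shapes directly; the length inequality plays no role here.

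For the second configuration in (a), and throughout (b) and (c), your contradiction from the length inequality is phrased for a \emph{single} $v_i$ lying in a proper segment of $w$ or $z$. That is not enough: from $L(v_i)<L(w)$ alone one cannot contradict $\min\{L(w),L(z)\}\le\min\{L(v_1),L(v_2)\}$, since $L(z)$ may be small. What the paper's case analysis of, say, $w_1^{-1}kz_1$ actually shows is that \emph{both} $v_1^{\pm1}$ and $v_2^{\pm1}$ occur, one as a proper subword of $w$ and the other as a proper subword of $z$; the resulting pair of strict inequalities then does contradict the hypothesis. The same happens in each option listed for (b). Your promised bookkeeping will uncover this, but it is precisely the point on which the argument hinges.
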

\begin{proof} We omit the proof of (c) since the statement is the dual of (b).

{\rm (a)}\qua A pair of extremal Gurevich subwords must be either $\{h_1^{-1}w^{-1}k_1, k_2zh_2^{-1}\}$ or $\{w_1^{-1}kz_1, z_2h^{-1}w_2^{-1}\}$, as in \fullref{lemma5.1}. Since $a_{\kappa}$ and $a_{\lambda}$ occur together in $u$, when we inspect our candidate pair $\{h_1^{-1}w^{-1}k_1,k_2zh_2^{-1}\}$ we see that $u$ cannot be matched against a syllable of $w$ or $z$ and hence we must have both $a_{\kappa}$ and $a_{\lambda}$ together in $h_1^{-1}$ and $h_2^{-1}$ respectively.  Similar remarks apply to $c_{\mu}$ and $c_{\nu}$ and it follows that for pairs $\{h_1^{-1}w^{-1}k_1,k_2zh_2^{-1}\}$, the possibilities are those listed above.

An analysis of the possibilities for pairs $\{w_1^{-1}kz_1, z_2h^{-1}w_2^{-1}\}$ yields the following:
\begin{align*}w_1^{-1}kz_1&\equiv u_1(a_{\kappa})^{-1}v_1^{-1}v_0v_2u_2(a_{\lambda})^{-1}&&\text{ with }v_0=k;\\ 
w_1^{-1}kz_1 &\equiv u_2(a_{\kappa})v_2^{-1}v_0^{-1}v_1u_1(a_{\lambda})&&\text{ with }v_0^{-1}=k;\\
z_2h^{-1}w_2^{-1}&\equiv v_{02}(c_{\nu})v_2u^{-1}v_1^{-1}v_{01}(c_{\mu})&&\text{ with }u^{-1} = h^{-1};\\
z_2h^{-1}w_2^{-1}&\equiv v_{01}(c_{\nu})^{-1}v_1uv_2^{-1}v_{02}(c_{\mu})^{-1}&&\text{ with }u = h^{-1},
\end{align*}
where we extend our convention about denoting initial and terminal subscripts of words in this case to $u$ and $v_0$ by writing $u_1, u_2$ and $v_{01}, v_{02}$ respectively. However in each case we observe $v_1^{\pm 1}$ and $v_2^{\pm 1}$ as proper subwords of either $w$ or $z$ contradicting $\hbox{min}\{L(w),L(z)\} \leq \hbox{min}\{L(v_1),L(v_2)\}$.
 
{\rm (b)}\qua In a manner parallel to the argument for \fullref{lemma5.1} (c),it follows the only possible form for a pair is $w_1^{-1}h'z_1, z_1h^{-1}w_2^{-1}$. 
The options are: 
\begin{align*}w_1^{-1}h'z_1 &\equiv v_{02}(c_{\mu})v_2u^{-1}v_1^{-1}v_{01}(c_{\nu})&&\text{ with }u^{-1} = h';\\
w_1^{-1}h'z_1 &\equiv v_{01}(c_{\mu})^{-1}v_1uv_2^{-1}v_{02}(c_{\nu})^{-1} ) &&\text{ with }u = h';\\
 z_2h^{-1}w_2^{-1}&\equiv v_{02}(c_{\nu})v_2u^{-1}v_1^{-1}v_{01}(c_{\mu})^{-1} )&&\text{ with }u^{-1} = h^{-1};\\
 z_2h^{-1}w_2^{-1}&\equiv v_{01}(c_{\nu})^{-1}v_1uv_2^{-1}v_{02}(c_{\mu})^{-1} )&&\text{ with }u = h.
\end{align*}
 However, in each case the length inequality is contradicted. \end{proof}

\begin{prop}\label{Proposition 5.5} Let $$F(A^*,B^*) \cap F(A^*_+,B^*,C^*) = \langle u \rangle * F(A^*_+,B^*) = \langle v \rangle * F(A^*_+,B^*)$$ with $u = v_1^{-1}v_0v_2$ where \fullref{CAB} holds. Furthermore let the equality $wh=kz$, where $w\in L$ and $z\in U$ are both nontrivial of type $(A^*:C^*)$ and $h \in F(A^*,B^*)$, $k \in F(B^*,C^*)$ , define an element of $F(A^*,B^*,C^*_-) \cap F(A^*_+,B^*,C^*)$. 
 
If $\hbox{min}\{L(w),L(z)\} \leq \hbox{min}\{L(v_1),L(v_2)\}$, then one of the following holds:
\begin{enumerate}
\item[\rm(a)] The element defined by $wh=kz$ is non-exceptional and the equality holds freely -- in particular, $h=k=1$ and $w \equiv z$ is intermediate.

\item[\rm(b)] $v_1, v_2$ are distinct and nontrivial and $h^{-1}w^{-1}kz$ is a cycle of $(u^{-1}v_1^{-1}v_0v_2)^{\pm 1}$.  In particular $w \equiv v_1, z \equiv v_2, h\equiv u, k\equiv v_0$, in other words $wh=kz$ is precisely $v_1u = v_0v_2$, or, similarly, $wh=kz$ is precisely $v_2u^{-1}=v_0^{-1}v_1$;

\item[\rm(c)] $v_1 = v_2 \equiv \tilde v$ is nontrivial and $h^{-1}w^{-1}kz$ is a cycle of $(u^{-l}\tilde v^{-1}v_0^l\tilde v)^{\pm 1}$ for some nonzero integer $l$. In particular, $w \equiv \tilde v \equiv z$ and $h=u^l, k= v_0^l$.
\end{enumerate}
\end{prop}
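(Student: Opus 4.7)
The strategy parallels that of \fullref{Proposition 5.2}, invoking \fullref{Lemma 5.4} in place of \fullref{lemma5.1}. First, if the element defined by $wh = kz$ is non-exceptional, then the equality must hold freely in $F(A^*_+,B^*,C^*_-)$; the type conditions on $w, z$ together with $h \in F(A^*,B^*)$ and $k \in F(B^*,C^*)$ immediately force $h = k = 1$ and $w \equiv z$ to be intermediate, giving conclusion~(a). Hence the plan is to assume for the remainder that the element is exceptional, so that the cyclically reduced form of $h^{-1}w^{-1}kz$ coincides, up to cyclic rotation, with a nontrivial power $(u^{-1}v_1^{-1}v_0v_2)^{\pm m}$ of the basic exceptional relator.

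Following the case structure of \fullref{Proposition 5.2}, I would first dispose of the three degenerate sub-cases in which at least one of $h, k$ is trivial, by carefully tracking any maximal common initial or terminal segment between $w$ and $z$. Such a segment lies in both $U$ and $L$ and must therefore be intermediate, so it cannot displace any occurrence of an extremal generator. In each degenerate sub-case, the cyclically reduced word falls back to one of the four shapes covered by \fullref{Lemma 5.4}(a)--(c), and so the analysis reduces to the principal sub-case together with its duals.

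In the principal sub-case, with $h, k$ both nontrivial, the word $h^{-1}w^{-1}kz$ is already cyclically reduced. If $|m| = 1$, direct syllable-matching against $(u^{-1}v_1^{-1}v_0v_2)^{\pm 1}$, controlled by the fact that both $v_1, v_2$ are intermediate together with the length hypothesis, pins down exactly the two alignments $v_1 u = v_0 v_2$ and $v_2 u^{-1} = v_0^{-1} v_1$ listed in~(b) when $v_1 \neq v_2$, while if $v_1 = v_2 = \tilde v$ these alignments coincide with the $l=1$ instance of~(c). If $|m| > 1$, then the cyclic word must contain two disjoint extremal Gurevich subwords, and \fullref{Lemma 5.4}(a) restricts any such pair to the forms~(1)--(4) listed there. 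Combining this with $\min\{L(w),L(z)\} \leq \min\{L(v_1),L(v_2)\}$ forces $\{w,z\} \subseteq \{v_1,v_2\}$ and $L(w) = L(z) = \min\{L(v_1),L(v_2)\}$. When $v_1 \neq v_2$ this is incompatible with $|m|>1$ once the middle segments are read off, leaving only~(b); when $v_1 = v_2 = \tilde v$ the matching cleanly reads $w \equiv z \equiv \tilde v$, $h = u^m$, $k = v_0^m$, yielding~(c) with $l = m$.

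The main obstacle I anticipate lies in this last step: one must verify that successive copies of the basic relator within the cyclic word interlock cleanly, so that no piece of $u^m$ or $v_0^m$ leaks into $w^{-1}$ or $z$ and no extra intermediate content is absorbed into them beyond $\tilde v$. Controlling this requires applying the length hypothesis at every overlap between adjacent Gurevich subwords and using the intermediate character of $\tilde v$ to keep the extremal generators fixed in the positions dictated by the matching. This book-keeping is the technical counterpart of the delicate syllable analysis in Proposition~6.2 of~\cite{C}, and is where the proof will be most laborious.
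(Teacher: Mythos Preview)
Your overall architecture---split into the non-exceptional case, the principal case with $h,k$ nontrivial, and the degenerate cases---matches the paper's, but there is a genuine error in your framing of the exceptional case, and a missing key step.

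The error is your assertion that, once the element is exceptional, the cyclically reduced form of $h^{-1}w^{-1}kz$ must be a cyclic rotation of a \emph{power} $(u^{-1}v_1^{-1}v_0v_2)^{\pm m}$. Words in the normal closure of the relator need not be cyclic rotations of powers of it; the Gurevich dichotomy is simply ``either a cycle of $(u^{-1}v_1^{-1}v_0v_2)^{\pm 1}$ or contains two disjoint (extremal) Gurevich subwords''. Your subsequent case split ``$|m|=1$ versus $|m|>1$'' therefore does not correspond to anything, and your plan to read off $h=u^m$, $k=v_0^m$ by matching $m$ interlocking copies has no foundation: in the second branch of the dichotomy the word is not a priori a power at all, so there are no ``successive copies of the basic relator'' to interlock.

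What the paper actually does in the second branch is much cleaner and avoids the combinatorics you flag as laborious. From \fullref{Lemma 5.4}(a) one learns that $w$ (or $z$) literally coincides with $v_1$ or $v_2$. Taking, say, $w\equiv v_1$, the equality $h=w^{-1}kz$ then exhibits $h\in F(A^*,B^*)$ as $v_1^{-1}kz\in F(A^*_+,B^*,C^*)$, i.e.\ as an exceptional element of $F(A^*,B^*)\cap F(A^*_+,B^*,C^*)$. At this point one invokes Proposition~5.1 of~\cite{C}, which classifies such elements and delivers conclusions~(b) or~(c) directly (the integer $l$ in~(c) comes from that classification, not from counting copies inside $h^{-1}w^{-1}kz$). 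The parallel cases use the fact that $F(A^*,B^*,C^*_-)\cap F(B^*,C^*)$ is also exceptional. This reduction to a known result is the step you are missing.

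Finally, the degenerate cases with $h$ or $k$ trivial are not ``reduced to the principal sub-case'' in the paper; they are shown to be \emph{impossible}. After cancelling the common terminal (and/or initial) segment one gets strictly $L(w')<L(w)$ and $L(z')<L(z)$, and then either the Case~(i) argument forces $\{w',z'\}=\{v_1,v_2\}$, contradicting the length hypothesis, or \fullref{Lemma 5.4}(b)/(c) rules out two disjoint Gurevich subwords while the separation of $a_\kappa$ and $a_\lambda$ rules out being a single cycle. Your sketch that they ``fall back'' to the principal case understates this: the point is precisely that the strict length drop blocks all options.
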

\begin{proof} If some extremal generator does not appear in $wh=kz$, then the equality must hold freely in the Magnus subgroup omitting this generator and (a) follows. So we can assume that all four do appear. 
 
{\rm (i)}\qua Suppose, firstly, that $h,k \neq 1$ so that $h^{-1}w^{-1}kz$ is cyclically reduced and (a) cannot hold.  Then either $h^{-1}w^{-1}kz$ is a cycle of $(u^{-1}v_1^{-1}v_0v_2)^{\pm 1}$ or $h^{-1}w^{-1}kz$ contains a pair of disjoint extremal Gurevich subwords. 

Let the former occur. Since $u$ and $v_0$ contain, respectively, $a_{\lambda}$ as well as $a_{\kappa}$ and $c_{\nu}$ as well as $c_{\mu}$, $w$ and $z$ have to be subwords of $v_1,v_2$ or their inverses, one to each. Since $L(w) + L(z) = L(v_1)+L(v_2)$, we then obtain either (b), or (c) with $l=1$.  Suppose, on the other hand, that $h^{-1}w^{-1}kz$ contains a pair of disjoint extremal Gurevich subwords. By \fullref{Lemma 5.4}(a), one of $w \equiv v_1,w\equiv v_2,z\equiv v_1, z\equiv v_2$ must hold. Suppose, for instance, $w\equiv v_1$; then $h=w^{-1}kz$ must define an exceptional element of $F(A^*,B^*)\cap F(A^*_+,B^*,C^*)$. By Proposition 5.1 of \cite{C} applied to $F(A^*,B^*) \cap F(A^*_+,B^*,C^*)$ either (b) or (c) holds.  Similar arguments apply in the remaining cases, using, in addition, the fact that $F(A^*,B^*,C^*_-) \cap F(B^*,C^*)$ is exceptional.

{\rm (ii)}\qua Suppose that $h=1$ and $k \neq 1$; as noted, the equality cannot hold freely and we shall show that it cannot in fact occur. We find ourselves in a position similar to that of \fullref{Proposition 5.2} where the cyclically reduced form of $kzw^{-1}$ is obtained by cancelling a common terminal segment of $w$ and $z$. As previously, this common initial segment must be intermediate and so the occurrences of $a_{\kappa}$ and $a_{\lambda}$, which necessarily appear in $w$ and $z$, respectively will not be cancelled.  Then, depending on the exact nature of common terminal segment cancelled, the resulting cyclically reduced word will be either of the form $kz'h'^{-1}w'^{-1}$ with $w'$, $z'$ also both of type $(A^*:C^*)$, or $kz'k'^{-1}w'^{-1}$, with $w'$, $z'$  both of type $(A^*:A^*)$, and $h'$, respectively $k'$, nontrivial.  
 
Suppose that we get $kz'h'^{-1}w'^{-1}$; since $k,h' \neq 1$ this is cyclically reduced.  By repeating the argument for Case (i), we deduce that $w'\equiv v_1$ and $z' \equiv v_2$ or vice-versa. However we also know that $L(w')<L(w), L(z')<L(z)$, since the final syllables of $w$ and $z$ must have been completely cancelled and so the length inequality is contradicted and this situation cannot occur.
 
If we have $kz'k'^{-1}w'^{-1}$, then this too is cyclically reduced. It cannot be a cycle of $(u^{-1}v_1^{-1}v_0v_2)^{\pm 1}$ since occurrences of $a_{\kappa}$ and $a_{\lambda}$ are separated by $k$ and $k'$.  We again have $L(w')<L(w), L(z')<L(z)$ since we ``raided''  the final syllables of $w$ and $z$ to obtain $k'$ and thus $\hbox{min}\{L(w'),L(z')\} \leq \hbox{min}\{L(v_1),L(v_2)\}$.  By \fullref{Lemma 5.4}(c), $kz'k'^{-1}w'^{-1}$ cannot contain two disjoint Gurevich subwords. This completes the elimination of all possibilities. 
 
The remaining cases (iii), when $h\neq 1$ and $k = 1$ and (iv) $h=k=1$ are disposed of similarly. \end{proof}

\section{Final remarks}\label{sec6}

One can derive a slightly more general conclusion from \fullref{thm2}.  We begin with a simple Lemma.

\begin{lem}\label{lemma6.1} Let $G = \langle X : r=1 \rangle$, where $r$ is cyclically reduced, be a one-relator group. Further let $M = F(S),N = F(T)$ be Magnus subgroups of $G$ and $g,g'$ be elements of $G$ and suppose that $gMg'^{-1}\cap N$ is nonempty. Then:
\begin{enumerate} \item [\rm(a)] For any element $k \in  gMg'^{-1}\cap N$, $$(gMg^{-1} \cap N)k = gMg'^{-1}\cap N = k(g'Mg'^{-1} \cap N).$$   
\item [\rm(b)] $|gMg'^{-1}\cap N|=1$ if and only if $gMg^{-1} \cap N = 1 = g'Mg'^{-1} \cap N)$.

\item [\rm(c)] $g \in NM$ if and only if $g'\in NM$ in which case $gMg'^{-1}\cap N = (k(M\cap N)k^{-1})k^*$ where $g=kh,g'=k'h'$ and $k^*=kk'^{-1}$.
\end{enumerate}
\end{lem}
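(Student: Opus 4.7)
The plan is to treat all three parts by direct coset manipulation; no appeal to \fullref{thm2} is needed, since the lemma is a purely formal statement about cosets of intersections of conjugate subgroups. The hypothesis that $gMg'^{-1}\cap N$ is nonempty lets us fix a ``connector'' $k\in gMg'^{-1}\cap N$, which I would write as $k = gm_{0}g'^{-1}$ with $m_{0}\in M$ and use to shuttle between the three intersections appearing in the statement.

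For (a), to establish $(gMg^{-1}\cap N)k\subseteq gMg'^{-1}\cap N$, I would take an arbitrary $x = gm_{1}g^{-1}\in gMg^{-1}\cap N$ and compute $xk = g(m_{1}m_{0})g'^{-1}$, which is visibly in $gMg'^{-1}$ and, as the product of two elements of $N$, also in $N$. The reverse inclusion is dual: for $y = gmg'^{-1}\in gMg'^{-1}\cap N$, the element $yk^{-1} = g(mm_{0}^{-1})g^{-1}$ lies in $gMg^{-1}\cap N$. The right-hand identity $gMg'^{-1}\cap N = k(g'Mg'^{-1}\cap N)$ follows from the same manipulation applied on the other side. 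Part (b) is then an immediate corollary of (a): the set $gMg'^{-1}\cap N$ is simultaneously a right coset of $gMg^{-1}\cap N$ and a left coset of $g'Mg'^{-1}\cap N$, and a coset has cardinality one precisely when the underlying subgroup is trivial.

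For (c), I would first prove the equivalence $g\in NM\Leftrightarrow g'\in NM$. If $g = k_{1}h_{1}$ with $k_{1}\in N$ and $h_{1}\in M$, then $g' = k^{-1}gm_{0} = (k^{-1}k_{1})(h_{1}m_{0})$ realises $g'$ as an element of $NM$; the converse follows by interchanging the roles of $g$ and $g'$ and noting that $k^{-1}\in g'Mg^{-1}\cap N$. Once both $g$ and $g'$ are written as $g = kh$ and $g' = k'h'$ with $k,k'\in N$, $h,h'\in M$, the essential simplification is $gMg'^{-1} = kh\,M\,h'^{-1}k'^{-1} = kMk'^{-1}$, since $h,h'\in M$. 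For $y = kmk'^{-1}\in kMk'^{-1}$, membership in $N$ is equivalent, after multiplying by $k^{-1}$ on the left and $k'$ on the right, to $m\in M\cap N$; hence $gMg'^{-1}\cap N = k(M\cap N)k'^{-1}$, which I would rewrite as $(k(M\cap N)k^{-1})(kk'^{-1}) = (k(M\cap N)k^{-1})k^{*}$ to recover the form stated.

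The only genuine point of care is notational: the element $k$ of $gMg'^{-1}\cap N$ used in (a) and (b) is a priori distinct from the $N$-factor $k$ in the factorisation $g = kh$ invoked in (c), and I would use separate symbols ($k$ versus $k_{1}$) until the very last step to prevent confusion. Beyond this bookkeeping there is no real obstacle: once the identity $gMg'^{-1} = kMk'^{-1}$ has been noted, everything reduces to routine subgroup algebra inside $N$, with no input from the theory of one-relator groups required.
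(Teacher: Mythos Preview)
Your proof is correct and follows essentially the same route as the paper's: both arguments fix a witness $k=gm_0g'^{-1}\in gMg'^{-1}\cap N$ and use it to reduce everything to elementary coset manipulations, with no input from the one-relator theory. The paper writes the identities in (a) as a chain of set equalities (substituting $g=kg'h^{-1}$ directly) rather than your element-by-element verification, and it leaves the explicit derivation of the formula in (c) implicit after reaching $gMg'^{-1}\cap N = k(Mg'^{-1}\cap N)$; your version spells this out and is arguably cleaner, and your care in distinguishing the two uses of the symbol $k$ is well placed.
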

\begin{proof} (a)\qua We have an equality $ghg'^{-1}=k$, where $k$ is our given element of $N$ and $h \in M$. Then $gMg'^{-1} \cap N = kg'h^{-1}Mg'^{-1} \cap N = kg'Mg'^{-1} \cap kN =  k(g'Mg'^{-1} \cap N)$. Similarly  we obtain $(gMg^{-1} \cap N)k = gMg'^{-1}\cap N$.

(b)\qua  This is immediate from (a).

(c)\qua  Let $ g =kh \in NM$, where $k \in N, h \in M$. Then $gMg'^{-1}\cap N = khMg'^{-1}\cap kN = k(Mg'^{-1}\cap N)$.  This means that  $Mg'^{-1}\cap N$ is nonempty and so we have an equality $h'g'^{-1} = k'$ giving $g'=k'h' \in NM$. \end{proof}

From this we can now derive the following corollary to \fullref{thm2}.

\medskip
\textbf{Corollary}\qua {\sl Let $G = \langle X : r=1 \rangle$, where $r$ is cyclically reduced, be a one-relator group and $M = F(S),N = F(T)$ be Magnus subgroups of $G$. For any $g,g' \in G$, one of the following holds:
\begin{enumerate}
\item[\rm(i)] $gMg'^{-1}\cap N$ is empty.

\item[\rm(ii)] $gMg'^{-1}\cap N$ is nonempty, $g, g' \in  NM$ and $gMg'^{-1}\cap N$ is a both a left coset of a conjugate of $M \cap N$ and a right coset of a (different) conjugate of $M \cap N$.

\item[\rm(iii)] $gM{g'}^{-1}\cap N$ is nonempty, $g, g' \notin  NM$ and $gMg'^{-1}\cap N$ is a right coset of the cyclic group $gMg^{-1}\cap N$ and a left coset of the cyclic group $g'Mg'^{-1}\cap N$.
\end{enumerate}  }
\begin{proof} This is immediate from \fullref{lemma6.1} and \fullref{thm2}. \end{proof}

Although the Corollary is formally a slightly more general statement than \fullref{thm2}, the greater generality seems to be of no particular value in making arguments.  One might have hoped that in the analysis of an equality of the form $$gh_0bh_1 \ldots bh_mg^{-1} = k_0bh_1 \ldots bk_m$$ such as that occurring in \fullref{Claim 4.2} -- where $g, h_i, k_i \in G^*$ so that the Normal Form equalities $gh_0z_0^{-1} =k_0, \overleftarrow {z_0}h_1z_1^{-1}=k_1, \ldots, \overleftarrow {z_{n-1}}h_ng^{-1}=k_n$ are all of the form described in the Corollary relative to the Magnus subgroups $M=F(A^*,B^*)$ and $N=F(B^*,C^*)$ of $G^*$ -- would permit a direct inductive argument taking the statement of the Corollary as the inductive hypothesis.  However, this does not seem to be possible, probably because the Corollary is obtained so easily and so the level of additional generality is thus very slight. 

\bibliographystyle{gtart}
\bibliography{link}

\end{document}